	\newcounter{theorem}
	\newcounter{corollary}
	\newcounter{lemma}
	\newcounter{definition}
	\newtheorem{mythm}[theorem]{Theorem}
	\newtheorem{mycor}[corollary]{Corollary}
	\newtheorem{mydef}[definition]{Definition}
	\newtheorem{mylem}[lemma]{Lemma}
\newcommand{\stackeq}[1]{
	%\refstepcounter{stackr}
	\stackrel{\mathrm{(#1)}}{=}
}
\newcommand{\vx}{\mathbf{x}}
\newcommand{\vu}{\mathbf{u}}
\renewcommand{\P}{\mathbb{P}}
\newcommand{\E}{\mathbb{E}}
\begin{document}

\maketitle

\section{Introduction} \label{sec: intro}

In this paper, we examine the problem of searching a social network for a particular target individual by sequentially examining the neighbors of other known users.
Social media applications enable users to connect with each other, forming social networks.  For different reasons which we will discuss, one may wish to find a target individual in the social network.  One may have prior knowledge that the target individual is connected with a set of known users, and so the most logical place to begin searching is the neighbors of these known users.  If querying each of these neighbors incurs some sort of cost, then the goal would be to find the target with as few queries as possible.

%A typical use of social media applications is to reconnect with old friends and acquaintances.  
For example, suppose Mary is searching a social media application for an account belonging to John, an old friend from school with whom she has lost contact.  From what she knows about John, Mary might be able to develop a list of accounts she knows about within the social media application to which John's account might be connected.  For example, she might recall that John was good friends with Matt, who has a social media account that is known to Mary.  She also might remember John was active in a certain charity, which also maintains a social media account known to Mary.  

After developing such a list, Mary could sequentially explore each account's connections, but doing so could take a substantial amount of time.  In order to find John's account quickly (assuming John has an account), Mary might devise a search strategy.  For example, she might start by looking at accounts she feels are the most likely to be connected with John's account.  Alternatively,  she might start by looking at accounts with fewer connections, because her goal is to find John's account while minimizing the time spent exploring.

In this hypothetical scenario, what Mary is doing is an example of a network \emph{vertex search} in which the sought object, or \emph{target} might be found by examining the neighbors of a finite set of known vertices.  Once the search target is found, the search typically terminates.  Each known vertex $i$ might have a different degree, $N_{i}$, requiring a different number of search queries to exhaustively search.  From our scenario, we also consider that each known vertex $i$ might have a different probability of being connected to the target vertex, which we denote as $\varphi_{i}$.

In this paper we present a probabilistic ``multi-urn'' model for searches of this nature in which we represent each known vertex as an urn containing a finite number of marbles.  Each marble in an urn represents one of the respective vertex's neighbors.  The search consists of successively drawing and and examining individual marbles from the urns with the goal of finding a red marble, representing the search target, in the fewest number of draws.  Figure \ref{fig: urn_network} depicts the multi-urn model for a network search on three known vertices.  In this example, each of the known vertices is connected to the search target, so each urn contains a single red marble.  Additionally, each urn contains a number of blue marbles that represent the other neighbors belonging to each respective vertex. 
  Employing a dynamic programming framework, we provide some insight into the optimal search policy in this general model and under certain conditions.

\begin{figure}[!htb]
\centering
\fbox%
{
\includegraphics[width=0.4\textwidth]{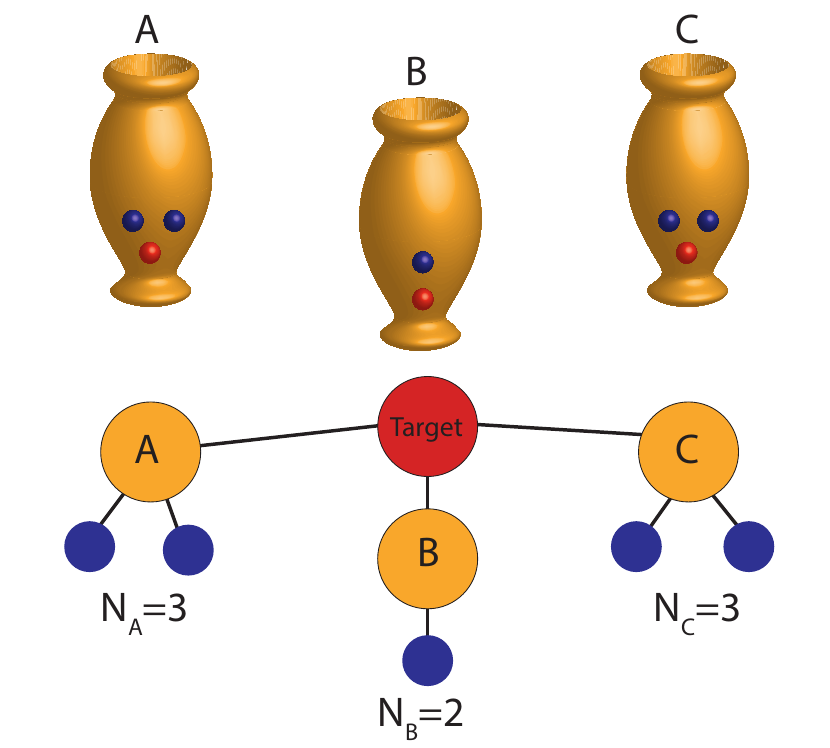}
}
\caption{Network search representation as a multi-urn model.} \label{fig: urn_network}
\end{figure}

\subsection{Application}

Our model applies to any scenario where one must sequentially search for the target amongst a set of entities which are separated into different clusters.  In our vertex search problem, the entities are vertices and the clusters are neighborhoods of the known vertices.   Our main motivation for this model is in social media applications where many times the goal is to find  users who harass others, incite violence, or engage in other dangerous behaviors.  Twitter has been suspending large numbers of users, many of which support or engage in violent extremism, from its micro-blogging application for violating the site's published rules \cite{usatoday}.  The challenge is that these users can simply create a new account each time one is suspended.  However, from historical data Twitter could predict the accounts to which the suspended user is likely to connect.  Using this information, Twitter administrators could then apply our optimality criteria to efficiently locate new accounts belonging to suspended users.  

 There are other scenarios where this model can apply.  For instance, for law enforcement and intelligence applications, the search entities could be suspects in a crime and the clusters could be geographical locations.  Or if one is examining a dump of emails from a suspect's server, one may be looking for an incriminating email, so the entities are emails and the clusters could be recipients of the emails.  In both of these examples, the process of querying the entities requires a non-trivial amount of resources (interviewing a suspect, reading an email),  so it is important to find the target as quickly as possible.  Using an optimal or near-optimal search strategy is therefore crucial in these examples.

%%%%%%%%%%%%%%%%%%%%%%%%%%%%%%%%%%%%%%%%%%%%%%%%%%%%%%%%%%%%%%%%%%%%%%%%%%%%%%%%%%%%%%%%%%%%%%%%%%%%%%%%%%%%%%%
\subsection{Our Contributions}

In this paper we develop a multi-urn search model, a new and useful methodology for analyzing searches similar to the network vertex search problem we proposed in the introduction.  We employ a dynamic programming framework to analyze this model and  provide theoretical results based on the nature of the cost function.

In particular, we show that in this type of search problem there always exists an optimal policy, i.e., one that minimizes the expected number of marbles drawn before finding a red marble, in which  the marbles in each urn are exhausted before moving on to the next urn (Theorem \ref{thm: blockpolicy}).  We refer to a policy that meets this criterion as a \emph{block policy}, and show that this result holds irrespective of dependencies among the urns.  This result is surprising because the presence of the red ball in the urns could have arbitrary correlations.  There could be positive correlations, where if the ball is in one urn, it is more likely to be in another urn.  Or the correlations could be negative, so if a ball is in one urn, it is less likely to be in another urn.  Nonetheless, our result shows that no matter what the dependency between the urns, a block policy is always an optimal policy to find the red ball.  

Building on this finding, we provide optimality conditions that enable immediate determination of an optimal policy in two specific cases: 
\begin{itemize}
    	\item Each urn contains a red marble independent of other urns (Theorem \ref{thm: independence_optimality}).  This case corresponds to the assumption that each known vertex is connected to the search target independent of other known vertices' connections.
    	\item  There is at most one red marble among all of the urns (Theorem \ref{thm: single_marble_optimality}).  This case corresponds to the assumption that the search target is connected to at most one known vertex.
    \end{itemize}    

Finally, we provide detailed analysis of the system dynamics of the multi-urn search model, leading to insight into the intuition behind our findings.  We provide monotonicity properties on the evolution of certain probabilities as marbles are successively drawn (Theorem \ref{thm: monotonicity}) and establish a useful bound on how much the probability of drawing a red marble from a certain urn can change between two successive stages (Theorem \ref{thm: probrate}).
%%%%%%%%%%%%%%%%%%%%%%%%%%%%%%%%%%%%%%%%%%%%%%%%%%%%%%%%%%%%%%%%%%%%%%%%%%%%%%%%%%%%%%%%%%%%%%%%%%%%%%%%%%%%%%%
\subsection{Previous Work}

Much of the work that has been done in the context of network search is focused on finding \emph{relevant} vertices in a large scale network.  Google's PageRank algorithm is probably the most well known example of these methods, of which many adaptations and generalizations exist \cite{pagerank-survey}.  Our work looks at an essentially different type of network search: one of finding a \emph{specific} vertex in a network, presumably identifiable by certain features, by investigating network neighborhoods in which the vertex is likely to appear.

The network vertex search problem we have proposed could instead be formulated as a multi-arm bandit problem.   \citet{bandit-survey} provide a broad survey of many variations of the multi-arm bandit problem and their respective applications.  These problems are typically likened to a gambler who has a choice of playing from a set of slot machines.  At each discrete stage in the process the gambler selects and plays a slot machine for a certain cost and receives a stochastic reward from an unknown distribution.  The more times the gambler plays a particular machine, the more he is able to learn about its reward distribution.

In the multi-arm bandit setting, the gambler would not want to spend too much money playing low-payout slot machines just to learn their reward distribution.  This quandary is the fundamental tradeoff between \emph{exploration} and \emph{exploitation}, which is inherent in multi-arm bandit problems.  In order to make money, the gambler wants to play only the highest-payout slot machine.  However, he never really knows the true distributions of any of the machines.  As a result, optimal multi-arm bandit policies often include a balance of exploratory actions, in which decisions are made for the sole purpose of observing outcomes, and exploitative actions, in which decisions are made to optimize the outcomes based on what has been learned.%%%

The multi-arm bandit problem objective is often characterized as the minimization of \emph{regret}, which is essentially the difference in expectation between what the gambler earns and what he would have earned by playing the best machine.  \citet{lai} provide a very well-known method for constructing adaptive multi-arm bandit policies using upper confidence bounds, for which regret grows proportional to the logarithm of the number of plays in the limit.   \citet{auer2002finite} show that this same bound on regret is also achievable in finite time.

The multi-urn search model we present could be cast in the context of a finite time multi-arm bandit problem, but there are a few notable differences.  Our objective, to find the search target as quickly as possible, does not immediately cast itself as minimizing regret.  
\citet{gittins1979bandit} overcomes this difficulty by augmenting the state space in the multi-arm bandit formulation with a ``success'' state, from which no additional costs or rewards are incurred.  Building on this adaptation, \citeauthor{gittins1979bandit} describes a class of search problems that are very similar to our network search problem, and characterizes the optimal search policy based on his well-known dynamic allocation index \cite{gittins1974}.

Our search problem differs from \cite{gittins1979bandit}, however, in that each vertex has a fixed, finite, and known number of neighbors.  In essence, we assume the reward distribution of each slot machine is known, and we only allow a fixed, finite number of plays on each machine.  Unlike the bandit approach, the outcomes of successive marble draws from a single urn are not assumed to be independent observations from an unknown distribution.  Instead, our model uses a known distribution on each machine but limits each machine to allowing at most a single win.  Furthermore, in our approach we allow for dependencies between the urns, whereas the multi-arm bandit approach typically assumes each slot machine's outcomes are independent of the others.

In spite of these differences, the dynamic allocation index applied in the class of search models proposed by \citet{gittins1979bandit} has many similarities to our development.  The system dynamics in both cases are governed by Bayesian probability updates.  We show that in at least two cases the optimal policy can be characterized by a priority index, which is derived directly from the system dynamics and can be interpreted as the expected rewards of decisions.  \citet{gittins1979bandit} also mentions monotonicity properties of his dynamic allocation index in the context of search that are similar to the monotonicity properties we derive.  Our method for proving optimality uses similar logic to the proofs given in \cite{frostig}, which are based on the original development by \citet{gittins1974}.

Like multi-arm bandit problems, urn models have been applied in many contexts, including discrete decision processes.  The P\'olya urn process is a well-known construct using urns that has been adapted and used in many applications \cite{mahmoud2008polya}.  This process generally consists of one or more urns, each containing certain numbers of marbles of different colors.  At each stage in the process a marble is randomly drawn from an urn and its color observed.  This color then dictates an action involving placing one or more marbles of certain colors into certain urns.  

\citet{polya-medical} provides a specific adaptation the P\'olya urn process to the problem of conducting medical trials in a way that is meant to exploit the use of treatments that have shown positive results in the past, which is very similar to multi-arm bandit models applied in the same context.  The P\'olya urn process has also been used as  a preferential attachment model in the formation of networks \cite{Chung2003}.  This application can be useful in considering how links form in social networks, and is similar to our problem.  We assume, however, that the links are already present in the network and are instead interested in finding the optimal way to investigate these existing links.

\citet{downey2006probabilistic} employ a multi-urn model that is very similar to ours but serves a different purpose: unsupervised information extraction.  The model these authors propose uses urns to represent different collections of documents.  Marbles drawn from the urns represent specific documents, from which specific labels are extracted.  The objective of the model is to learn which labels are the correct, or ``target'' labels, and which labels are erroneous extractions.

The urn model proposed in \cite{downey2006probabilistic} differs substantially from ours in its objective.  \citeauthor{downey2006probabilistic} have the objective of learning model parameters and, in the unsupervised case, learning which labels are correct.  In the urn model we present, we assume the probability distributions and the target labels are known a priori, and we aim to to find a target marble as efficiently as possible.

Our network search problem is also related to the problem of mutual information maximization.  If our goal was mutual information maximization, we would not necessarily focus our search effort on trying to find the target vertex.  Instead, we would examine the places that would give us the most information about where the target is likely to be.  This is similar to the goal of exploration in the multi-arm bandit problem.
\citet{chen2015sequential} analyze a sequential information maximization problem that parallels our development, using a dynamic programming approach and giving bounds on the performance of the greedy approach.  The problem the authors propose involves learning about the distribution of an unknown parameter of interest by sequentially observing other variables.  Each observation provides some information about the unknown parameter, and the objective is to maximize the total information gained in a fixed number of observations.  

Our multi-urn search model departs most substantially from the development in \cite{chen2015sequential} by imposing additional constraints and dynamics in the way observations are made.   In our model, the urns are depleted over time, changing the amount of information contained in each successive marble drawn in predictable, but sometimes unintuitive ways.  Our main contributions are the characterizations of  optimal search policies under various probability models, which come directly from analysis of the dynamics inherent in our multi-urn search model.

Finally, recent work in scheduling and inspection policies employ similar dynamic programming approaches to characterize optimal policies.  \citet{levi2016scheduling} use dynamic programming to find policies that optimally allocate resources between information gathering and task execution.  This class of models provides a natural extension to our network search problem.  While we assume a probability model on a set of known vertices, using this approach we could attempt to find the optimal balance between the time spent learning a probability model on a set of known vertices and the time spent executing the search on the current known vertex set.

%%%%%%%%%%%%%%%%%%%%%%%%%%%%%%%%%%%%%%%%%%%%%%%%%%%%%%%%%%%%%%%%%%%%%%%%%%%%%%%%%%%%%%%%%%%%%%%%%%%%%%%%%%%%%%%
\section{Multi-urn Search Model} \label{sec: model}

We return to the context of network vertex search as presented in the introduction.  Let $\mathcal{V}$ be the set of vertices to search, and let $N_{i}$ be the number of search queries required to exhaustively search vertex $i \in \mathcal{V}$.  We assume that the neighbors of each vertex are queried in a random order, so that each individual neighbor query of a particular vertex is equally likely to be the search target, given that the target is connected to the queried vertex. 
 
Under these assumptions, we can represent this search problem as an experiment involving randomly drawing marbles from a set of urns, where each urn represents a known vertex in the network.  Each marble in urn $i \in \mathcal{V}$ represents a neighbor of vertex $i$.  The degree of vertex $i$ is $N_{i}$, so urn $i$ initially has $N_{i}$ marbles.  With probability $\varphi_{i}>0$, exactly one of the $N_{i}$ marbles in urn $i$ is red, indicating that known vertex $i$ is connected to the target vertex.  Otherwise, all marbles in all urns are blue.

Querying a random neighbor of vertex $i$ in search of the target is analogous to drawing a random marble from urn $i$ and observing its color.  If the marble is red, the target vertex has been located.  If the marble is blue, the target has not been found and the search continues with the remaining marbles.  Note that blue marbles are \emph{not} put back into the urns; once they are drawn they are discarded.
Just as Mary desires to find her old friend John with as few searches as possible, the goal in this experiment is to minimize the number of blue marbles drawn before finding a red one.  %The set of urns that contain a target marble is not known a priori.  Recall from our hypothetical search scenario, the social media user determined that her old friend was more likely to be connected to certain accounts.  

We now more completely specify the probability model that accounts for how the target vertex might be connected to the set of known vertices, i.e., how red marbles might be distributed among the urns.  Let $A_{i}$ be the event that the target vertex is connected to vertex $i$.  We have already defined
\[
\varphi_{i}=\P(A_{i}).
\]
%as the probability the target is connected to vertex $i$.  
More generally, we let
\[
\varphi_{U}=\P\left(\bigcap_{i\in U}A_{i}\right)
\]
be the probability that the target vertex is connected to all vertices in set $U \subseteq \mathcal{V}$.  If we were to assume that the target would connect to the members of $U$ independently, then $\varphi_{U}=\prod_{i \in U}\varphi_{i}$.  In general, the connections might not be independent.  For example, Mary might think that if John connected with a certain musician he liked, he might be more likely to connect to other, similar musicians.   In other cases, a connection to a particular vertex might imply a decrease in the probability of connection to another vertex.

In our urn model, we assume a known probability $\varphi_{U}$ for all subsets $\{U: U \subseteq \mathcal{V}\}$, which fully specifies a probability model on the locations of the red marbles among the urns.  It allows for arbitrary correlations between urns, so that the presence of a red marble in one urn (or subset of urns) can have a positive or negative correlation with the presence of a red marble in another urn (or another subset of urns). 

We note now that the empty set $\emptyset \in \{U:U\subseteq \mathcal{V}\}$.  By convention, we set $\bigcap_{i\in\emptyset}A_{i}=\Omega$, so that $\varphi_{\emptyset}=1$. This term is implicitly included in summations over all subsets expressed in this paper.  For example, the summation 
\[\sum_{U\subseteq{\mathcal{V}}}(-1)^{|U|}\varphi_{U}
\]
includes a ``1'' corresponding to the case in which $U=\emptyset$.

Given this set of probabilities, the probability of any specific outcome of marble locations, or vertex connections, can be determined using the well-known inclusion-exclusion formula.  For example, suppose we are interested in the probability that the marble is located in all of the urns in set $U$ and no other urns.  This event can be written as $\left(\bigcap_{i \in U}A_{i}\right)\cap\left(\bigcap_{j\in \mathcal{V} \setminus U}A_{j}^{c}\right)$, with
 \begin{equation}
 \P\left(\left(\bigcap_{i \in U}A_{i}\right)\cap\left(\bigcap_{j\in \mathcal{V} \setminus U}A_{j}^{c}\right)\right)
 = \sum_{S: S \subseteq \mathcal{V}, \ U \subseteq S}
 (-1)^{|S|-|U|} \varphi_{S} \geq 0. \label{eq: inclusion-exclusion}
 \end{equation}

Throughout this paper, we refer to the type of search described in this section as a \emph{multi-urn search problem} which we now more formally define.  %While the multi-urn analogy provides a useful mechanism for describing and visualizing the underlying probability model,
\begin{mydef} \label{def: multiurn}
A \emph{multi-urn search problem} is a search problem that can be modeled as sequentially drawing marbles from a set of urns, $\mathcal{V}$, where
\begin{enumerate}
\item The objective of the searcher is to find a red marble with as few draws as possible.
\item Each urn $i \in \mathcal{V}$ contains at most a single red marble.  Otherwise, all marbles are blue.
\item Each urn $i \in \mathcal{V}$ contains a known number of marbles, $N_{i}$.
\item For each subset of urns $U \subseteq \mathcal{V}$, the probability that a red marble is present in all urns in $U$ is $\varphi_{U}$.  Additionally, we set $\varphi_{\emptyset}=1$.
\end{enumerate}
\end{mydef}

The network vertex search problem we used to motivate this model can be characterized as a multi-urn search problem, but this model might have other useful applications as well.  For this reason, in the remainder of this paper we provide all analyses and results in the multi-urn search context, using the language of ``urns'' and ``marbles,'' though we could immediately recover our original context by substituting ``known vertices'' and ``neighbors,'' respectively.  

%%%%
%In the search model we have defined, the decisions are carried out sequentially in discrete stages.  For the first query, the searcher selects an urn and draws a marble (or, in our social media example, selects an account and queries the first neighbor). If the target is found, the search terminates.  Otherwise, the searcher choses another urn and draws another marble, and the process repeats.  We assume the search continues until the target is found or there are no more marbles left in any of the urns.  

\subsection{Dynamic Programming Framework}

In the search model we have defined, the decisions are carried out sequentially in discrete stages. We now take a dynamic programming approach \cite{bertsekas-dp} to framing this problem.  

We model the search process as a discrete dynamic system of the form
\[
\mathbf{x}(t+1)=f(\mathbf{x}(t),u(t),w(\vx(t),u(t))),
\]
where $t = 0,1\ldots $ is the \emph{stage} of the search, which we equate to the total number of marbles already drawn from the urns.  The system state, $\vx(t)$, is a record of the total number of marbles drawn from each urn, which sufficiently characterizes the system at stage $t$.  Parameter $u(t)$ is the decision made, or urn selected, at stage $t$, and $w(\vx(t),u(t))$ is a binary stochastic input that indicates whether a red marble is drawn from urn $u(t) \in \mathcal{V}$ in state $\vx(t)$.  %The probability distribution of $w(\vx(t),u(t))$ implicitly depends on $x(t)$ and $u(t)$.  

If a red marble is drawn at stage $t$, then $w(\vx(t),u(t))=1$ and the search terminates.  Otherwise, $w(\vx(t),u(t))=0$ and the search continues.  Letting $x_{i}(t)$ be the number of marbles that have been removed from urn $i$ at time $t$, we can explicitly define the state vector
\[
\mathbf{x}(t)=(x_{1}(t),x_{2}(t),\ldots,x_{|\mathcal{V}|}).
\]
If a blue marble is drawn from urn $u(t)$ in state $\vx(t)$, the state transition function is:
\[
f(\mathbf{x}(t),u(t),0)=\vx(t)+\mathbf{e}_{u(t)},
\]
where $\mathbf{e}_{i}$ is the $i$th unit vector.  If a red marble is drawn at any stage, the search terminates.

Our dynamic programming model consists of at most $N+1$ stages, where $N = \sum_{i \in \mathcal{V}} N_{i}$ is the total number of marbles summed over all of the urns, and is finite.
%The state transition function, $f$, governs how the process evolves from one stage to the next.

We define a valid  \emph{policy} $\vu$ as a sequence of decisions 
$
(u(0),u(1),\ldots,u(N-1)),
$
where $u(t) \in \mathcal{V}$ for $t=0,1,\ldots,N-1$, \emph{and} for which
\[
|\{t: \; u(t)=i\}| = N_{i} \qquad \forall \ i \in \mathcal{V}.
\]
This final condition ensures that the policy will eventually exhaust each urn, as long as a red marble is not found, while at the same time never attempting to draw marbles from an empty urn.  A searcher executing a valid policy draws a marble from urn $u(t)$ at each stage $t$ until either the target marble is found or there are no marbles remaining in any of the urns, in which case the entire policy has been executed.

We note that in this dynamic programming model there is no benefit in making policy decisions during search execution.  At each stage the searcher either draws a red marble, in which case she stops looking, or draws a blue marble and keeps searching.  A valid policy provides an ordering of urn queries that is essentially conditioned on not drawing a red marble, which can be considered a deterministic process governed by our simple state transition function.  The expected search outcomes for such a policy can be analyzed and compared to those of other policies a priori.

\subsubsection{Stage $t$ Probability of Drawing a Red Marble}

Building on our dynamic programming modeling assumptions, we now develop the probability distribution associated with $w(\vx(t),u(t))$.  Recall that this function indicates whether a red marble is drawn in stage $t$: $w(\vx(t),u(t))=1$ implies a red marble is drawn from urn $u(t)$ at stage $t$, while $w(\vx(t),u(t))=0$ implies a blue marble is drawn from $u(t)$ at stage $t$.  

In determining the probability distribution of $w(\vx(t),u(t))$, it is important to remember that in order to arrive in stage $t$ while executing policy $\vu%
%change pi to bf u
$, the preceding queries $u(0),u(1),\ldots,u(t-1)$ would have been executed \emph{without drawing a red marble}, so that the system arrives in state $\vx(t)$.  For simplicity of notation, we condition an event on state $\vx(t)$ to imply that state $\vx(t)$ has been reached without having drawn a red marble.  For example, $\P(A_{i}\mid\vx(t))$ represents the probability urn $i$ contains a red marble, given queries $u(0),u(1),\ldots,u(t-1)$ have been executed without drawing a red marble.  %The conditioning of the distribution of $w(\vx(t),u(t))$ on state $\vx(t)$ is always implied and therefore omitted from our notation.

%Include conditioning in distribution of w
Using the multiplication rule, we can write the probability
\begin{equation}
\P(w(\vx(t),u(t))=1)=\left(\frac{1}{N_{u(t)}-x_{u(t)}(t)}\right)\P(A_{u(t)}\mid\vx(t)), \label{eq: w1}
\end{equation}
which is the probability of drawing a red marble from the $N_{u(t)}-x_{u(t)}(t)$ marbles remaining in urn $u(t)$, given there is a red marble in $u(t)$, multiplied by the probability urn $u(t)$ contains a red marble given the system has arrived at state $\vx(t)$.  

The complementary probability can be written using the law of total probability:
\begin{align}
\P(w(\vx(t),u(t))=0)&=\left(1-\frac{1}{N_{u(t)}-x_{u(t)}(t)}\right)\P(A_{u(t)}\mid\vx(t))
+\P(A^{c}_{u(t)}\mid\vx(t)) \nonumber \\
&=1-\left(\frac{1}{N_{u(t)}-x_{u(t)}(t)}\right)\P(A_{u(t)}\mid\vx(t)) \label{eq: w0}
\end{align}

\subsubsection{Stage $t$ Urn Probabilities}

In this process, we have assumed a fully specified initial probability model on the urns, i.e., for any subset $U \subseteq \mathcal{V}$, the probability that a red marble is present in all of the urns, $\varphi_{U}$, is known.   This probability model can be thought of as a Bayesian prior, a quantification of the searcher's beliefs on where a red marble might be found.

However, these probabilities are not static.  After drawing a marble from an urn, the probabilities change as a result of the new information.  If the marble drawn is red, then the probability that a red marble existed in the queried urn becomes 1.  Likewise, if the marble drawn is blue, then the probability that a red marble can be found in the queried urn decreases as a function of the number of marbles remaining in the urn and the current urn probability.

As long as a red marble is not found, the evolution of urn probabilities over the course of the search is completely determined by the initial probability model and the search policy.  We now provide a general expression for updated urn probabilities at stage $t$.

\begin{mythm} \label{thm: genprob}
{\bf (Urn Probabilities).}
In a multi-urn search problem over a set of urns $\mathcal{V}$, suppose a red marble is not found in the first $t$ queries when executing a valid policy $\vu=(u(0),\ldots,u(N-1))$.  Then, for any subset of urns $U \subseteq \mathcal{V}$, the probability of a red marble being in all of the urns in $U$ at stage $t$ is given by:
\[
\P\left(\bigcap_{i\in U} A_{i}\;\middle|\;\vx(t)\right) = 
\frac{
	\left[
		\prod_{i \in U}
		\left(
			1-\frac{
					x_{i}(t)
				}{
					N_{i}
				}
		\right)
	\right]
	\sum_{
		\{S \subseteq \mathcal{V}: \; S \supseteq U\}
	}
	(-1)^{|S|-|U|}\varphi_{S}
	\prod_{
		j \in S \setminus U
	}
	\frac{
		x_{j}(t)
	}{
		N_{j}
	}
}{
	\sum_{S \subseteq \mathcal{V}}
	(-1)^{|S|}
	\varphi_{S} 
	\prod_{j \in S} 
	\frac{
		x_{j}(t)
	}{
		N_{j}
	}
}.
\]
\end{mythm}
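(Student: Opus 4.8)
The plan is to compute the conditional probability directly from Bayes' rule, organizing everything around the (random) set of urns that actually contain a red marble. Let me write $R \subseteq \mathcal{V}$ for this random set, so that $A_i = \{i \in R\}$ and $\bigcap_{i \in U} A_i = \{U \subseteq R\}$; in this notation $\varphi_U = \P(U \subseteq R)$, and the inclusion-exclusion identity \eqref{eq: inclusion-exclusion} gives the point masses $\P(R = S) = \sum_{T \supseteq S}(-1)^{|T|-|S|}\varphi_T$ for each $S \subseteq \mathcal{V}$.

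The first substantive step is the likelihood $\P(\vx(t) \mid R = S)$. Since the policy is deterministic once we condition on never drawing a red marble, reaching state $\vx(t)$ is exactly the event that the first $x_i(t)$ marbles drawn from each urn $i$ are blue. Conditioned on $R = S$, the urns with no red marble contribute probability $1$, while for each urn $i \in S$ the red marble occupies a uniformly random one of its $N_i$ positions, independently across urns, so the chance it has not surfaced after $x_i(t)$ draws is $(N_i - x_i(t))/N_i$. Hence $\P(\vx(t)\mid R=S) = \prod_{i\in S}\bigl(1 - x_i(t)/N_i\bigr)$. Averaging over $S$ for the denominator and over $S \supseteq U$ for the numerator and dividing yields
\[
\P\Bigl(\bigcap_{i\in U}A_i \,\Big|\, \vx(t)\Bigr)
= \frac{\sum_{S \supseteq U}\P(R=S)\prod_{i\in S}\bigl(1-\tfrac{x_i(t)}{N_i}\bigr)}{\sum_{S\subseteq\mathcal{V}}\P(R=S)\prod_{i\in S}\bigl(1-\tfrac{x_i(t)}{N_i}\bigr)}.
\]

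The remaining work is purely algebraic. I would substitute the inclusion-exclusion expression for $\P(R=S)$ into both sums and interchange the order of summation so that $\varphi_T$ is summed on the outside. The key simplification is the elementary identity $\sum_{B \subseteq T}(-1)^{|B|}\prod_{i\in B} y_i = \prod_{i\in T}(1 - y_i)$, applied with $y_i = 1 - x_i(t)/N_i$ so that $1 - y_i = x_i(t)/N_i$. In the denominator this collapses the inner sum to $\prod_{i\in T} x_i(t)/N_i$, producing exactly $\sum_{T}(-1)^{|T|}\varphi_T\prod_{i\in T} x_i(t)/N_i$. In the numerator one first factors out $\prod_{i\in U}\bigl(1 - x_i(t)/N_i\bigr)$ (legitimate because every surviving $S$ contains $U$), reindexes by $B' = B \setminus U$, and applies the same identity over $T \setminus U$ to obtain $\sum_{T\supseteq U}(-1)^{|T|-|U|}\varphi_T\prod_{j\in T\setminus U} x_j(t)/N_j$. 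Matching these two expressions against the stated numerator and denominator completes the proof.

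I expect the only genuinely subtle point to be the likelihood computation $\P(\vx(t)\mid R = S) = \prod_{i \in S}\bigl(1 - x_i(t)/N_i\bigr)$: one must argue carefully that conditioning on $\vx(t)$ is equivalent to requiring the first $x_i(t)$ draws from each urn to be blue, which relies on the observation (already noted in the excerpt) that a valid policy induces a deterministic draw sequence given that no red marble appears, and that the positions of the red marbles in distinct urns are independent and uniform given $R = S$. Once that likelihood is in hand, everything else is the bookkeeping of a double inclusion-exclusion, with the binomial identity performing all the cancellation.
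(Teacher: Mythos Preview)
Your approach is correct and genuinely different from the paper's. The paper proves the formula by induction on $t$: it writes down a one-step Bayesian recursion for $\P\bigl(\bigcap_{i\in U}A_i\mid \vx(t+1)\bigr)$ in terms of stage-$t$ quantities, substitutes the inductive hypothesis, and then carries out a fairly long algebraic verification split into the two cases $u(t)\in U$ and $u(t)\notin U$. You instead compute the posterior in one shot by conditioning on the random set $R$ of red-marble urns, exploiting the clean product form of the likelihood $\P(\vx(t)\mid R=S)=\prod_{i\in S}\bigl(1-x_i(t)/N_i\bigr)$, and then collapsing the resulting double sums via the identity $\sum_{S\subseteq T}(-1)^{|T|-|S|}\prod_{i\in S}y_i=\prod_{i\in T}(y_i-1)$ with $y_i=1-x_i(t)/N_i$. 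Your route is shorter and more conceptual: it explains \emph{why} the formula looks the way it does (both numerator and denominator are inclusion--exclusion expansions of the same likelihood-weighted prior) and makes the path-independence of the posterior immediate rather than an after-the-fact observation. The paper's inductive argument, by contrast, requires no global insight about the likelihood and would still go through for someone who had not spotted the product structure; the price is two pages of case-by-case algebra where your argument needs one identity.
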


\begin{proof}{Proof.}
See Section \ref{proof: genprob}.
\end{proof}
Substituting the result in Theorem \ref{thm: genprob} into Equations \eqref{eq: w1} and \eqref{eq: w0} gives us the following corollary.
\begin{mycor} \label{cor: drawprob}
The probability distribution of $w(\vx(t),u(t))$, indicating whether a red marble is drawn at stage $t$ is,
\[
\P(w(\vx(t),u(t))=k)=\begin{cases}
\frac{
	\sum_{S \subseteq \mathcal{V}}
	(-1)^{|S|}
	\varphi_{S} 
	\prod_{i \in S} 
	\frac{
		x_{i}(t+1)
	}{
		N_{i}
	}
}{
	\sum_{S \subseteq \mathcal{V}}
	(-1)^{|S|}
	\varphi_{S} 
	\prod_{j \in S} 
	\frac{
		x_{i}(t)
	}{
		N_{i}
	}
},
%Define inclusion exclusion as function
 & k=0
\\
\left(\frac{1}{N_{u(t)}}\right)
\frac{
	\sum_{
		\{S \subseteq \mathcal{V}: \; u(t) \in S\}
	}
	(-1)^{|S|-1}\varphi_{S}
	\prod_{
		i \in S \setminus \{u(t)\}
	}
	\frac{
		x_{i}(t)
	}{
		N_{i}
	}
}{
	\sum_{S \subseteq \mathcal{V}}
	(-1)^{|S|}
	\varphi_{S} 
	\prod_{i \in S} 
	\frac{
		x_{i}(t)
	}{
		N_{i}
	}
},
 & k=1.
\end{cases}
\] 
\end{mycor}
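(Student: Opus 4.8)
The plan is to obtain both cases by direct substitution of Theorem~\ref{thm: genprob} into Equations~\eqref{eq: w1} and~\eqref{eq: w0}, exploiting the deterministic blue-draw transition $\vx(t+1)=\vx(t)+\mathbf{e}_{u(t)}$. Throughout I would write $u=u(t)$ and let $Z(t)=\sum_{S\subseteq\mathcal{V}}(-1)^{|S|}\varphi_{S}\prod_{j\in S}\frac{x_{j}(t)}{N_{j}}$ denote the common denominator appearing in Theorem~\ref{thm: genprob}. The whole argument is an algebraic verification; the interesting content is already carried by Theorem~\ref{thm: genprob}.

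For the case $k=1$, I would apply Theorem~\ref{thm: genprob} with $U=\{u\}$ to evaluate $\P(A_{u}\mid\vx(t))$ and insert it into Equation~\eqref{eq: w1}. The numerator supplied by the theorem carries the factor $1-\frac{x_{u}(t)}{N_{u}}=\frac{N_{u}-x_{u}(t)}{N_{u}}$, which cancels against the prefactor $\frac{1}{N_{u}-x_{u}(t)}$ in~\eqref{eq: w1}, leaving the constant $\frac{1}{N_{u}}$. Recognizing that the index set $\{S:S\supseteq\{u\}\}$ coincides with $\{S:u\in S\}$ then yields the stated expression for $k=1$ directly.

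For the case $k=0$, I would begin from $\P(w=0)=1-\P(w=1)$, as in Equation~\eqref{eq: w0}, together with the $k=1$ result just obtained. The two denominators already agree (both equal $Z(t)$), so the task reduces to showing that the claimed numerator $\sum_{S}(-1)^{|S|}\varphi_{S}\prod_{i\in S}\frac{x_{i}(t+1)}{N_{i}}$ equals $Z(t)-\frac{1}{N_{u}}\sum_{\{S:u\in S\}}(-1)^{|S|-1}\varphi_{S}\prod_{i\in S\setminus\{u\}}\frac{x_{i}(t)}{N_{i}}$. Here I would invoke the transition, which gives $x_{u}(t+1)=x_{u}(t)+1$ and $x_{i}(t+1)=x_{i}(t)$ for $i\neq u$. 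Splitting the numerator sum according to whether $u\in S$, and writing $\frac{x_{u}(t)+1}{N_{u}}=\frac{x_{u}(t)}{N_{u}}+\frac{1}{N_{u}}$ for the terms with $u\in S$, separates it into one piece that recombines with the $u\notin S$ terms to reconstitute $Z(t)$, and a second piece $\frac{1}{N_{u}}\sum_{\{S:u\in S\}}(-1)^{|S|}\varphi_{S}\prod_{i\in S\setminus\{u\}}\frac{x_{i}(t)}{N_{i}}$ that, after using $(-1)^{|S|}=-(-1)^{|S|-1}$, is precisely the subtracted term.

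I do not expect a genuine obstacle, since the computation is bookkeeping rather than conceptual. The one place demanding care is the sign accounting in the $k=0$ split: matching $(-1)^{|S|}$ against the $(-1)^{|S|-1}$ that appears in the $k=1$ numerator, and confirming that the $\frac{x_{u}(t)}{N_{u}}$ contribution correctly restores the \emph{full} $Z(t)$ rather than only its $u\in S$ portion.
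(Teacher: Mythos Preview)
Your proposal is correct and follows exactly the approach the paper indicates: the paper states only that the corollary follows by ``substituting the result in Theorem~\ref{thm: genprob} into Equations~\eqref{eq: w1} and~\eqref{eq: w0},'' and your write-up simply spells out that substitution, including the cancellation $\frac{1}{N_u-x_u(t)}\cdot\frac{N_u-x_u(t)}{N_u}=\frac{1}{N_u}$ for $k=1$ and the split of the $k=0$ numerator via $x_u(t+1)=x_u(t)+1$. There is nothing to add.
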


Theorem \ref{thm: genprob} provides a few important insights.  First, we can see that the urn probabilities at any stage depend only on the numbers of marbles drawn from all of the urns, and not the order in which they were drawn.  This \emph{path-independence} property of the stage $t$ urn probabilities is somewhat intuitive: a specific set of marbles drawn gives us a fixed amount of information irrespective of the order in which we inspect the marbles.  We will make use of the path-independence property in the proofs for Theorems \ref{thm: blockpolicy}, \ref{thm: independence_optimality}, and \ref{thm: single_marble_optimality}.

Another observation is that the form of the probability expression is similar to the well-known inclusion-exclusion formula for computing probabilities of unions of events.  In fact, this probability is an application of the principle of inclusion-exclusion applied in conjunction with Bayesian updates.  In Lemma \ref{lem: problemma}, we explicitly  define the events that are characterized by the inclusion-exclusion formulas in Theorem \ref{thm: genprob}.

\subsection{Costs}

In many network search applications, the cost of finding and examining a (random) neighbor of a known vertex is primarily the time consumed in executing the query and reviewing the results to determine whether the neighbor is the search target.  Because we have no reason to believe this time-cost would be different for different vertex-neighbor queries, we assume in our model that the cost of drawing a marble is the same for all urns.  The goal of the searcher is simply to minimize the number of blue marbles drawn, or vertex-neighbor queries executed, before finding the search target.  

We therefore define the cost function at stage $t$, 
\[
g(t)=\begin{cases} 1 & w(\vx(t),u(t))= 0 \\
0 & \mathrm{otherwise},
\end{cases}
\]
which applies a unit cost for every blue marble drawn.  Because this quantity is stochastic, we set as our  objective the minimization of expected total cost.  Letting random variable $C= \sum_{t=0}^{N} g(t)$, we aim to find the optimal policy $\vu$ to solve the following optimization problem:
\[
\mathrm{minimize}_{\vu} \ \E\left[C\right].
\]
Because $C \in \{0,1,\ldots,N\}$ almost surely, we can write
\begin{align}
\E\left[C\right] & = \sum_{k=0}^{N-1} \P\left(C > k\right) \nonumber \\
& = \sum_{k=0}^{N-1} \prod_{t=0}^{k}\P(w(\vx(t),u(t))=0). \label{eq: simplecost}
\end{align}
The product in this summation, $\prod_{t=0}^{k}\P(w(\vx(t),u(t))=0)$, is exactly the probability of making it to stage $k+1$ without having found a red marble.  From Corollary \ref{cor: drawprob} we can find an expression for this probability.
\begin{mycor} \label{cor: cost}
Given a multi-urn search problem and valid policy $\vu$, the probability of arriving in stage $k+1$ without having found a red marble is
\begin{align*}
\P(C>k) &=\prod_{t=0}^{k} \P(w(\vx(t),u(t))=0) \\
& = 	\sum_{S \subseteq \mathcal{V}}
	(-1)^{|S|}
	\varphi_{S} 
	\prod_{i \in S} 
	\frac{
		x_{i}(k+1)
	}{
		N_{i}
	}.
\end{align*}
\end{mycor}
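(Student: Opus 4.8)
The plan is to exploit the telescoping structure that is already visible in Corollary \ref{cor: drawprob}. The first equality, $\P(C>k) = \prod_{t=0}^{k}\P(w(\vx(t),u(t))=0)$, is nothing more than the chain rule applied to the event that the draws at stages $0,1,\ldots,k$ are all blue, and it is recorded in Equation \eqref{eq: simplecost}; under the conditioning convention that reaching $\vx(t)$ means arriving there without a red marble, each factor $\P(w(\vx(t),u(t))=0)$ is exactly the conditional probability of a blue draw at stage $t$ given all previous draws were blue. So the only real work is to evaluate the product in closed form.

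To that end, I would introduce the shorthand $D(t) = \sum_{S\subseteq\mathcal{V}}(-1)^{|S|}\varphi_S \prod_{i\in S} x_i(t)/N_i$ for the quantity appearing in Corollary \ref{cor: drawprob}. Reading off the $k=0$ branch of that corollary, the stage-$t$ blue-draw probability is precisely the ratio $\P(w(\vx(t),u(t))=0) = D(t+1)/D(t)$, since the numerator is $D$ evaluated at the successor state $\vx(t+1)$ while the denominator is $D$ evaluated at $\vx(t)$. Substituting this into the product collapses it by cancellation: $\prod_{t=0}^{k} D(t+1)/D(t) = D(k+1)/D(0)$.

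It then remains to compute the normalizing factor $D(0)$. At the start of the search no marbles have been drawn, so $x_i(0)=0$ for every $i\in\mathcal{V}$; hence every term of $D(0)$ with a nonempty $S$ carries a factor $x_i(0)/N_i = 0$ and vanishes, while the $S=\emptyset$ term contributes the empty product together with $\varphi_\emptyset = 1$. Thus $D(0)=1$, and the product equals $D(k+1)$, which is exactly the claimed expression $\sum_{S\subseteq\mathcal{V}}(-1)^{|S|}\varphi_S \prod_{i\in S} x_i(k+1)/N_i$.

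The argument is essentially immediate once the ratio structure is identified, so I do not anticipate a genuine obstacle; the one point worth checking is that the telescoping is legitimate, i.e. that each denominator $D(t)$ is strictly positive so that no factor is an indeterminate $0/0$. This holds because $D(t)$ is precisely the probability of surviving to stage $t$ without a red draw (the normalizing constant in Theorem \ref{thm: genprob}), which is positive for any state reachable under a valid policy, and I would note this explicitly to justify the cancellation at each step. An induction on $k$ using the one-step recursion $\P(C>k) = \P(C>k-1)\,\P(w(\vx(k),u(k))=0)$ yields the same identity and could serve as an alternative if one prefers not to invoke positivity of all the intermediate denominators simultaneously.
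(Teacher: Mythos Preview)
Your proposal is correct and matches the paper's approach: the paper simply states that the expression follows from Corollary~\ref{cor: drawprob}, and the telescoping of the ratios $D(t+1)/D(t)$ together with $D(0)=1$ is precisely the computation that justifies it. Your explicit check that $D(0)=1$ via $x_i(0)=0$ and $\varphi_\emptyset=1$, and your remark on positivity of the intermediate denominators, are welcome clarifications that the paper leaves implicit.
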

We can therefore rewrite the cost function,
\begin{equation}
\E\left[C\right]  = 
\sum_{t=0}^{N-1}
\sum_{S \subseteq \mathcal{V}}
	(-1)^{|S|}
	\varphi_{S} 
	\prod_{i \in S} 
	\frac{
		x_{i}(t+1)
	}{
		N_{i}
	}.
	\label{eq: fullcost}
\end{equation}
Substituting the probability from Corollary \ref{cor: cost} into Corollary \ref{cor: drawprob} reveals an interesting property of the dynamics of this system:
\begin{equation}
\P\left(w(\vx(t),u(t))=0\right)=
\frac{\P(C > t)}{\P(C>t-1)}. \label{eq: cost_property}
\end{equation}

%%%%%%%%%%%%%%%%%%%%%%%%%%%%%%%%%%%%%%%%%%%%%%%%%%%%%%%%%%%%%%%%%%%%%%
\section{Key Results} \label{sec: findings} %General results and special cases.

The cost function in equation \eqref{eq: fullcost} is nonlinear and nonconvex.  Additionally, for a solution to be feasible, the values for $x_{i}(t)$, $i=1,\ldots,|\mathcal{V}|$, $t=0,\ldots,N$ must be constrained to correspond to stages reached by a valid policy.  Nonlinear, non-convex constrained optimization is difficult in general.  However, the structure of the cost function enables us to derive some useful results that characterize the optimal solution in general, and provide necessary and sufficient conditions for optimality in some specific cases.

\subsection{Block Policy Optimality} \label{subsec: blockpolicy}

We now provide our primary general result, in which we  give a characterization of an optimal search policy in the multi-urn search problem.  We begin with a definition.

\begin{mydef} \label{def: blockpolicy}
A \emph{block policy} is a valid policy $\mathbf{u}_{B}$ in which each urn is queried exhaustively prior to querying another urn.  A block policy can be specified as a sequence of urns $\mathbf{u}_{B}=(v^{1},v^{2},\ldots,v^{|\mathcal{V}|}), \ v^{i} \in \mathcal{V}$, implying
\[
u(t) = v^{i}
, 
\quad
% \mathrm{for} \ 
%\left(
	\sum_{j=1}^{i-1} N_{j} 
%\right) 
\leq t < 
%\left(
	\sum_{j=1}^{i} N_{j}
%\right)
.
%t = \left(\sum_{j=1}^{i-1} N_{j} \right),\left(\sum_{j=1}^{i-1} N_{j}+1\right),\ldots,\left(\sum_{j=1}^{i} N_{j}-1\right).
\]
\end{mydef} 
This definition can be used to characterize the optimal policy, which we now formally state.

\begin{mythm} \label{thm: blockpolicy}
{\bf (Block Policy Optimality).}
Given a multi-urn search problem in which the objective is to minimize the expected number of searches required to find a red marble, an optimal search policy exists that is a block policy.
\end{mythm}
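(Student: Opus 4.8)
The plan is to recast the objective purely in terms of the lattice of draw-count vectors and then run an exchange argument. By Corollary \ref{cor: cost}, writing $F(\vx)=\sum_{S\subseteq\mathcal V}(-1)^{|S|}\varphi_S\prod_{i\in S}x_i/N_i$, the expected cost of any valid policy is $\E[C]=\sum_{t=1}^{N}F(\vx(t))$, a sum of $F$ over the $N$ states visited along the policy's path from $\mathbf 0$ to the fixed terminal state $(N_1,\dots,N_{|\mathcal V|})$. Crucially, $F(\vx(t))$ depends only on the count vector $\vx(t)$ and not on the order of draws (the path-independence property noted after Theorem \ref{thm: genprob}). Since the state space is finite an optimal policy exists; I would show that any policy, and in particular an optimal one, can be turned into a block policy (Definition \ref{def: blockpolicy}) without increasing $\E[C]$, by repeatedly ``consolidating'' an urn that appears in two separated runs.

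The engine of the consolidation is the behavior of $F$ under single and double coordinate increments. Set $\Delta_a(\vx)=F(\vx)-F(\vx+\mathbf{e}_a)$; because $F$ is affine in each coordinate $x_a$, the difference $\Delta_a$ does not depend on $x_a$, and the cost change (new minus old) produced by transposing two adjacent draws from urns $a$ then $b$ at running state $\vx$ is exactly $\Delta_a(\vx)-\Delta_b(\vx)$. The key lemma is that the mixed second difference is nonnegative, $F(\vx)-F(\vx+\mathbf{e}_a)-F(\vx+\mathbf{e}_b)+F(\vx+\mathbf{e}_a+\mathbf{e}_b)\ge 0$. This follows directly from Theorem \ref{thm: genprob} applied to $U=\{a,b\}$, since that expression equals $\P(A_a\cap A_b\mid\vx)$ multiplied by the positive factors $F(\vx)$, $1/(N_aN_b)$, and $\prod_{i\in\{a,b\}}(1-x_i/N_i)^{-1}$, hence is a genuine nonnegative probability. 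This is the heart of why the result is insensitive to the correlation structure: even when connections are negatively correlated, the relevant second-order quantity is a bona fide conditional probability and therefore cannot be negative.

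Equivalently, the lemma states $\Delta_a(\vx+\mathbf{e}_b)\le\Delta_a(\vx)$: drawing a blue marble from one urn never increases the ``promise'' $\Delta$ of another. I would use this to show that whenever an urn $a$ is split by an intervening run of other draws, at least one of the two merges, namely pulling the later $a$-block backward past the run or pushing the earlier $a$-block forward past the run, does not increase the cost. In the base case of a single intervening draw $b$ (the pattern $a,b,a$, with state $\vx$ before the first $a$), the two merges change the cost by $\Delta_b(\vx+\mathbf{e}_a)-\Delta_a(\vx)$ (pulling the later $a$ back) and by $\Delta_a(\vx)-\Delta_b(\vx)$ (pushing the earlier $a$ forward); the second-difference lemma gives $\Delta_b(\vx+\mathbf{e}_a)\le\Delta_b(\vx)$, so if the latter change is positive then $\Delta_a(\vx)>\Delta_b(\vx)\ge\Delta_b(\vx+\mathbf{e}_a)$ and the former change is negative. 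Thus at least one merge is non-increasing, and each successful merge strictly reduces the number of runs, so iterating terminates at a block policy of no larger cost.

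The main obstacle is upgrading this ``at least one direction works'' dichotomy from a single intervening draw to an arbitrary intervening word $b_1\cdots b_m$ over the other urns. Here the per-step cost increments accumulated while sliding the $a$-block from one end of the word to the other involve different urns at different running states, so no fixed pairwise index governs them. The plan is to show that the total cost, viewed as a function of the $a$-block's position within the word, attains its minimum at one of the two extreme (fully merged) positions; I expect to establish this by a telescoping estimate in which the nonnegative mixed second differences are applied repeatedly to compare consecutive slide positions and pin down the sign of the increments. Making this monotonicity hold cleanly for a heterogeneous intervening word, rather than merely for repeated draws from a single urn, is the step I expect to require the most care.
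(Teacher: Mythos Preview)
Your approach is correct and is essentially the paper's own proof, repackaged in cleaner language. The paper also considers the two rearrangements that slide a single $i$-draw across the intervening word (its policies $\hat{\vu}$ and $\tilde{\vu}$ are precisely your two merges), and the monotonicity of its auxiliary function $h(t)$, proved via Lemma~\ref{lem: problemma}, is exactly your mixed-second-difference lemma $\Delta_a(\vx+\mathbf e_b)\le\Delta_a(\vx)$.

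The ``main obstacle'' you flag is not one. With $\vy_j=\vx+\sum_{l\le j}\mathbf e_{b_l}$, a direct comparison of the three state-multisets (original $a,b_1,\dots,b_m,a$; merged $a,a,b_1,\dots,b_m$; merged $b_1,\dots,b_m,a,a$) gives
\[
(\text{Change}_1)+(\text{Change}_2)
=\sum_{j=0}^{m-1}\bigl[\Delta_a(\vy_{j+1})-\Delta_a(\vy_j)\bigr]
=\Delta_a(\vy_m)-\Delta_a(\vy_0)\le 0,
\]
by iterating your second-difference lemma $m$ times; this is exactly the telescoping you anticipated. The paper obtains the same inequality by expanding the multilinear formula for $F$ and cancelling, then invoking $h(\tau+\delta)\ge h(\tau+1)$; the two routes differ only in bookkeeping. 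One minor simplification: you need only move a \emph{single} $a$-draw, not a whole block. Growing the leftmost $a$-run one draw at a time gives a clean termination measure and matches the paper exactly; the paper phrases it as ``both rearrangements preserve optimality, so successively permute,'' whereas you phrase it as ``at least one rearrangement does not increase cost,'' but these are equivalent for the existence statement.
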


\begin{proof}{Proof.}
See Section \ref{proof: blockpolicy}.
\end{proof}

This result says that an optimal policy for the multi-urn search problem can be characterized by a sequence
of urns.  Once this is specified, one then simply searches each urn until it is out of balls or a red ball is found.
The surprising part of this result is that this block policy optimality holds for arbitrary correlations in the a priori
connection probabilities.  For instance, there can be a negative correlation between two urns, where if the red ball is more likely
to be in one urn, it is less likely to be in another.  In this case one may intuitively expect that after querying an urn many times
and not finding a red ball, at some point it might be advantageous to search another urn which has a negative correlation with the queried urn.  However, our result says that it is optimal to continue querying the current urn until it is exhausted.  

While Theorem \ref{thm: blockpolicy} allows for optimal policies that are not block policies, constructing such a case requires initial conditions that include probabilities that are zero.  If $\varphi_{\{i,j\}}>0$ for all pairs $\{i,j\} \subset \mathcal{V}$ (as in the case of independent urns), then only block policies can be optimal policies.  This result follows from the proof of Theorem \ref{thm: blockpolicy} (Section \ref{proof: blockpolicy}): observe that this condition implies that function $h(t)$ in equation \eqref{eq: ht} is strictly increasing in $t$, creating a contradiction in equation \eqref{eq: hineq}.  

We have shown that for mutli-urn search problems, a block policy is optimal, but we have not yet specified
what the block policy is.  In general it can be difficult under arbitrary correlation structures to find the optimal policy.
However, under certain assumptions on the urn probability model, explicit necessary and sufficient optimality conditions can be found.
We examine these conditions next.

%%%%%%%%%%%%%%%%%%%%%%%%%%%%%%%%%%%%%%%%%%%%%%%%%%%%%%%%%%%%%%%%%%%%%
\subsection{Independent Urns} \label{subsec: independence}

We now consider the special case in which the red marbles are assumed to be independently present in each of the urns, so that the presence of a red marble in any urn (or group of urns) does not affect the probability of a red marble being present in any other urn (or group of urns).  This probabilistic independence can be formalized mathematically.

\begin{mydef} \label{def: independence}
An \emph{independent} multi-urn search problem is a multi-urn search problem in which, for any subset of urns, $U \subseteq \mathcal{V}$, 
\[
\varphi_{U}=%
%\P\left(\bigcap_{i \in U}A_{i}\right) = \prod_{i \in U} P(A_{i})=
\prod_{i \in U} \varphi_{i}.
\]
\end{mydef}
Intuitively, this independence property should be maintained throughout the search process for any search policy, as we now show explicitly.

\begin{mythm} \label{thm: independence}
{\bf (Independent Urn Probabilities).}
Given an independent multi-urn search problem,  then for any policy $\mathbf{u}$, at any stage $t$, the independence property is maintained so that
\[
\P\left(\bigcap_{i \in U} A_{i} \; \middle| \; \vx(t)\right) = \prod_{i \in U} \P(A_{i} | \vx(t)).
\]
\end{mythm}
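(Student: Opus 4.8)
The plan is to start from the explicit expression for $\P\!\left(\bigcap_{i\in U}A_i\mid\vx(t)\right)$ furnished by Theorem~\ref{thm: genprob} and substitute the independence hypothesis $\varphi_S=\prod_{i\in S}\varphi_i$ directly into it. The whole computation hinges on the elementary subset-sum factorization
\[
\sum_{S\subseteq\mathcal{W}}\ \prod_{i\in S}a_i=\prod_{i\in\mathcal{W}}(1+a_i),
\]
which is precisely the device that converts each sum over subsets of urns into a product over individual urns. Since the formula in Theorem~\ref{thm: genprob} depends only on the counts $x_i(t)$ (the path-independence property noted after that theorem), the claim ``for any policy $\vu$ at any stage $t$'' will be automatic once the factored form is established.

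First I would treat the denominator. Writing $\varphi_S=\prod_{i\in S}\varphi_i$ and absorbing the sign into each factor, the identity above gives
\[
\sum_{S\subseteq\mathcal{V}}(-1)^{|S|}\varphi_S\prod_{j\in S}\frac{x_j(t)}{N_j}
=\sum_{S\subseteq\mathcal{V}}\prod_{i\in S}\left(-\varphi_i\frac{x_i(t)}{N_i}\right)
=\prod_{i\in\mathcal{V}}\left(1-\varphi_i\frac{x_i(t)}{N_i}\right).
\]
For the numerator's inner sum I would use that $U\subseteq S$ lets independence split $\varphi_S=\left(\prod_{i\in U}\varphi_i\right)\prod_{j\in S\setminus U}\varphi_j$, and then reindex by $T=S\setminus U$, which ranges over all subsets of $\mathcal{V}\setminus U$, with $(-1)^{|S|-|U|}=(-1)^{|T|}$. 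Applying the same factorization yields
\[
\sum_{\{S\subseteq\mathcal{V}:\,S\supseteq U\}}(-1)^{|S|-|U|}\varphi_S\prod_{j\in S\setminus U}\frac{x_j(t)}{N_j}
=\left(\prod_{i\in U}\varphi_i\right)\prod_{j\in\mathcal{V}\setminus U}\left(1-\varphi_j\frac{x_j(t)}{N_j}\right).
\]

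Combining this with the prefactor $\prod_{i\in U}\left(1-x_i(t)/N_i\right)$ and dividing by the factored denominator, the factors indexed by $\mathcal{V}\setminus U$ cancel, leaving
\[
\P\!\left(\bigcap_{i\in U}A_i\;\middle|\;\vx(t)\right)
=\prod_{i\in U}\frac{\varphi_i\!\left(1-\frac{x_i(t)}{N_i}\right)}{1-\varphi_i\frac{x_i(t)}{N_i}}.
\]
Finally I would specialize this identical computation to a singleton $U=\{i\}$ to obtain $\P(A_i\mid\vx(t))=\varphi_i\!\left(1-x_i(t)/N_i\right)\big/\left(1-\varphi_i x_i(t)/N_i\right)$, and then observe that the displayed product over $U$ is exactly $\prod_{i\in U}\P(A_i\mid\vx(t))$, which is the assertion. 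I do not anticipate a genuine obstacle: once independence is substituted, everything factors cleanly across urns. The only point demanding care is the reindexing $S\mapsto S\setminus U$ in the numerator together with the bookkeeping of the signs $(-1)^{|S|-|U|}=(-1)^{|T|}$, so that the correct per-urn factors survive the cancellation.
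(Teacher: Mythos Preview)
Your proposal is correct and follows essentially the same route as the paper: substitute the independence hypothesis $\varphi_S=\prod_{i\in S}\varphi_i$ into the formula of Theorem~\ref{thm: genprob} and use the subset-sum factorization $\sum_{S\subseteq\mathcal W}\prod_{i\in S}a_i=\prod_{i\in\mathcal W}(1+a_i)$ to collapse both numerator and denominator into products over individual urns. The paper does the singleton case first and the general $U$ second (and proves the factorization identity by a short induction at the end), whereas you treat general $U$ first and then specialize, but the argument is the same.
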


\begin{proof}{Proof.}
See Section \ref{proof: independence}
\end{proof}
It follows from the result in Theorem \ref{thm: independence} that the probability of finding a red marble in stage $t$ is only a function of the initial conditions and number of times $u(t)$ has been queried in the past.  The number of marbles that have previously been drawn from other urns $i \neq u(t)$ do not affect $\P(w(\vx(t),u(t))=1)$.

Because of the independence of the urn probabilities, we are able to obtain closed form expressions
for the probability of finding a red ball and the expected cost of a block policy, which are stated in the
following results.

\begin{mycor} \label{cor: independence}
Given an independent multi-urn search problem, the probability distribution of $w(\vx(t),u(t))$ at any stage $t$ is 
\[
\P(w(\vx(t),u(t))=0) = \frac{N_{u(t)}-x_{u(t)}(t+1)\varphi_{u(t)}}{N_{u(t)}-x_{u(t)}(t)\varphi_{u(t)}}.
\]
\end{mycor}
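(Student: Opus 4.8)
The plan is to reduce the statement to a single-urn Bayesian update and then substitute into the general blue-draw probability \eqref{eq: w0}. Writing $i=u(t)$ for the urn queried at stage $t$, Equation \eqref{eq: w0} already gives
\[
\P\left(w(\vx(t),u(t))=0\right) = 1 - \frac{1}{N_{i}-x_{i}(t)}\,\P\left(A_{i}\mid\vx(t)\right),
\]
so the entire problem reduces to evaluating the posterior $\P(A_{i}\mid\vx(t))$ under the independence assumption.

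First I would compute $\P(A_{i}\mid\vx(t))$ by specializing Theorem \ref{thm: genprob} to the singleton $U=\{i\}$ and inserting $\varphi_{S}=\prod_{k\in S}\varphi_{k}$. The key observation is that under independence both inclusion-exclusion sums factor over urns. Reindexing each $S$ in the numerator as $\{i\}\cup T$ with $T\subseteq\mathcal{V}\setminus\{i\}$, the numerator sum collapses to $\varphi_{i}\prod_{j\neq i}\bigl(1-\varphi_{j}\tfrac{x_{j}(t)}{N_{j}}\bigr)$, while the denominator collapses to $\prod_{j\in\mathcal{V}}\bigl(1-\varphi_{j}\tfrac{x_{j}(t)}{N_{j}}\bigr)$. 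The common product over $j\neq i$ cancels, leaving the clean single-urn expression
\[
\P\left(A_{i}\mid\vx(t)\right) = \frac{\bigl(1-\tfrac{x_{i}(t)}{N_{i}}\bigr)\varphi_{i}}{1-\tfrac{x_{i}(t)}{N_{i}}\varphi_{i}} = \frac{(N_{i}-x_{i}(t))\,\varphi_{i}}{N_{i}-x_{i}(t)\,\varphi_{i}}.
\]
Equivalently, this follows from a direct Bayes argument justified by Theorem \ref{thm: independence}: since the red-marble configurations and draws are independent across urns, the draws from other urns are irrelevant to urn $i$'s posterior, and given $A_{i}$ the chance of drawing $x_{i}(t)$ blue marbles in $x_{i}(t)$ draws is exactly $1-\tfrac{x_{i}(t)}{N_{i}}$.

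Next I would substitute this posterior into \eqref{eq: w0}. The factor $N_{i}-x_{i}(t)$ cancels, yielding
\[
\P\left(w(\vx(t),u(t))=0\right) = 1 - \frac{\varphi_{i}}{N_{i}-x_{i}(t)\,\varphi_{i}} = \frac{N_{i}-(x_{i}(t)+1)\,\varphi_{i}}{N_{i}-x_{i}(t)\,\varphi_{i}}.
\]
Finally, since a marble is drawn from urn $i=u(t)$ at stage $t$, the state update gives $x_{u(t)}(t+1)=x_{u(t)}(t)+1$, which turns the numerator into the claimed $N_{u(t)}-x_{u(t)}(t+1)\varphi_{u(t)}$ and completes the proof.

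I expect no substantive obstacle here: the argument is almost entirely routine, and the only genuine work is the factorization of the two inclusion-exclusion sums in the specialization of Theorem \ref{thm: genprob}, which is standard once each $S\ni i$ is rewritten as $\{i\}\cup T$. The main care needed is bookkeeping the cancellations and correctly invoking $x_{u(t)}(t+1)=x_{u(t)}(t)+1$ so that the final expression matches the stated form exactly.
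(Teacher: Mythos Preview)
Your proposal is correct and essentially mirrors the paper's approach: the single-urn posterior $\P(A_{i}\mid\vx(t))=\varphi_{i}(1-x_{i}(t)/N_{i})\big/(1-\varphi_{i}x_{i}(t)/N_{i})$ is exactly the formula the paper derives in the proof of Theorem~\ref{thm: independence} via the same factorization of the inclusion--exclusion sums, and the corollary then follows immediately by substituting into equation~\eqref{eq: w0} together with $x_{u(t)}(t+1)=x_{u(t)}(t)+1$.
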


\begin{mycor} \label{cor: independence_block}
Given an independent multi-urn search problem and a block policy $\mathbf{u}_{B}=(v^{1},v^{2},\ldots,v^{|\mathcal{V}|}), \ v^{i} \in \mathcal{V}$, such that $\tau(i)=\sum_{j=1}^{i-1} N_{v^{j}}$ is the first stage in which urn $v^{i}$ is queried.  Then,
\[
\prod_{t=\tau(i)}^{\tau(i)+N_{i}-1} \P(w(\vx(t),u(t))=0) = (1-\varphi_{v^{i}}),
\]
the contribution of urn $v^{i}$ to the total expected cost is 
\[
\sum_{k=\tau(i)}^{\tau(i)+N_{i}-1} \prod_{t=0}^{k} \P(w(\vx(t),u(t))=0) = \left(N_{v^{i}}-\frac{(N_{v^{i}}+1)\varphi_{v^{i}}}{2}\right)\prod_{j=1}^{i-1}(1-\varphi_{v^{j}}),
\]
and the total expected cost is
\[
\E[C] = \sum_{i=1}^{|\mathcal{V}|}
\left(N_{i}-\frac{(N_{i}+1)\varphi_{i}}{2}\right)\prod_{j=1}^{i-1}(1-\varphi_{j}).
\]
\end{mycor}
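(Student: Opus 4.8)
The plan is to prove all three identities by direct telescoping, taking Corollary~\ref{cor: independence} as the starting point. The key observation is that under a block policy, urn $v^{i}$ is untouched until stage $\tau(i)$, so $x_{v^{i}}(\tau(i))=0$, and during its block the count increments by one at each stage: for $t=\tau(i)+s$ with $s=0,1,\ldots,N_{v^{i}}-1$ we have $x_{v^{i}}(t)=s$ and $x_{v^{i}}(t+1)=s+1$. Substituting into Corollary~\ref{cor: independence} gives the clean stagewise factor
\[
\P\bigl(w(\vx(t),u(t))=0\bigr)=\frac{N_{v^{i}}-(s+1)\varphi_{v^{i}}}{N_{v^{i}}-s\varphi_{v^{i}}}.
\]
For the first identity I would multiply these factors over $s=0,\ldots,N_{v^{i}}-1$; the product telescopes, numerator-to-denominator, leaving $\tfrac{N_{v^{i}}-N_{v^{i}}\varphi_{v^{i}}}{N_{v^{i}}}=1-\varphi_{v^{i}}$.

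For the second identity, I would start from the cost decomposition in \eqref{eq: simplecost}, so the contribution of urn $v^{i}$ is $\sum_{k=\tau(i)}^{\tau(i)+N_{v^{i}}-1}\prod_{t=0}^{k}\P(w(\vx(t),u(t))=0)$. The plan is to split each inner product $\prod_{t=0}^{k}$ at stage $\tau(i)$: the factors with $t<\tau(i)$ run over the \emph{completed} blocks $v^{1},\ldots,v^{i-1}$ and evaluate, by the first identity applied to each, to $\prod_{j=1}^{i-1}(1-\varphi_{v^{j}})$; the remaining factors form a \emph{partial} block product which, by the same telescoping as above but stopped at $k=\tau(i)+s$, equals $\tfrac{N_{v^{i}}-(s+1)\varphi_{v^{i}}}{N_{v^{i}}}$. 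Summing over $s=0,\ldots,N_{v^{i}}-1$ then reduces to the arithmetic series $\sum_{s=0}^{N_{v^{i}}-1}(s+1)=\tfrac{N_{v^{i}}(N_{v^{i}}+1)}{2}$, which yields $N_{v^{i}}-\tfrac{(N_{v^{i}}+1)\varphi_{v^{i}}}{2}$ after dividing by $N_{v^{i}}$, giving the stated contribution $\bigl(N_{v^{i}}-\tfrac{(N_{v^{i}}+1)\varphi_{v^{i}}}{2}\bigr)\prod_{j=1}^{i-1}(1-\varphi_{v^{j}})$.

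For the total cost, I would simply partition the full summation index $k=0,\ldots,N-1$ in \eqref{eq: simplecost} (where $N=\sum_{i}N_{i}$) into the consecutive blocks, so that $\E[C]$ is the sum over $i$ of the per-urn contributions just computed. Relabeling the urns in the order the block policy queries them, i.e.\ identifying $v^{i}$ with $i$, produces the final form $\E[C]=\sum_{i=1}^{|\mathcal{V}|}\bigl(N_{i}-\tfrac{(N_{i}+1)\varphi_{i}}{2}\bigr)\prod_{j=1}^{i-1}(1-\varphi_{j})$.

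I do not anticipate a genuine mathematical obstacle here, since every step is a telescoping product or an elementary arithmetic sum enabled by Corollary~\ref{cor: independence}. The only point requiring care is the index bookkeeping: verifying that $x_{v^{i}}(\tau(i))=0$, that the partial product over the current block stops at exactly $s+1$ in the numerator, and that the earlier factors split cleanly into completed blocks. The path-independence of the stage-$t$ urn probabilities (noted after Theorem~\ref{thm: genprob}) together with the independence result is what guarantees the stagewise factor inside urn $v^{i}$'s block depends only on $x_{v^{i}}$ and not on the other urns' prior draws, which is precisely what makes the telescoping valid.
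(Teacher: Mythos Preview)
Your proposal is correct and is exactly the intended derivation: the paper states this result as a corollary without a separate proof, so the implicit argument is precisely the direct telescoping computation from Corollary~\ref{cor: independence} that you carry out. Your bookkeeping is right, including the observation that the partial block product collapses to $(N_{v^{i}}-(s+1)\varphi_{v^{i}})/N_{v^{i}}$ and the final relabeling $v^{i}\leftrightarrow i$ needed to match the form of the third identity as stated.
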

Independence implies that knowing the composition of marbles in urn $i$ does not provide any additional information on the compositions of marbles in any of the other urns.  Drawing a marble from urn $u(t)$ in stage $t$ still results in an update to this urn's probability in stage $t+1$, but all other urn probabilities remain stationary in this state transition. This property enables us to characterize the optimal policy in the case of independent urns.

\begin{mythm} \label{thm: independence_optimality}
{\bf (Independent Urns Optimality)}
Given an independent multi-urn search problem,  a block policy 
\[
\mathbf{u}_{B} = (v^{1},v^{2},\ldots,v^{|\mathcal{V}|})
\]
is optimal if and only if
\begin{equation}
N_{v^{i}}\left(\frac{2-\varphi_{v^{i}}}{\varphi_{v^{i}}}\right) \leq 
N_{v^{i+1}}\left(\frac{2-\varphi_{v^{i+1}}}{\varphi_{v^{i+1}}}\right), \quad  i=1,2,\ldots,|\mathcal{V}|. \label{eq: independence_optimality}
\end{equation}
\end{mythm}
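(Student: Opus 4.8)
The plan is to run an adjacent-interchange (exchange) argument over block policies. By Theorem \ref{thm: blockpolicy} an optimal policy that is a block policy always exists, so it suffices to find the best ordering of urns among block policies; the criterion \eqref{eq: independence_optimality} will turn out to sort the urns by a scalar index. The starting point is the closed form from Corollary \ref{cor: independence_block}: writing $f(v) = N_{v} - \tfrac{(N_{v}+1)\varphi_{v}}{2}$ for the per-urn coefficient, a block policy $\mathbf{u}_{B} = (v^{1},\ldots,v^{|\mathcal{V}|})$ has expected cost
\[
\E[C] = \sum_{i=1}^{|\mathcal{V}|} f(v^{i})\prod_{j=1}^{i-1}\bigl(1-\varphi_{v^{j}}\bigr).
\]

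First I would fix a position $i$ and compare $\mathbf{u}_{B}$ against the policy $\mathbf{u}_{B}'$ obtained by transposing the adjacent urns $a = v^{i}$ and $b = v^{i+1}$. The key structural observation, and the linchpin of the argument, is that this transposition is \emph{local}: every prefix weight $\prod_{j<i}(1-\varphi_{v^{j}})$ is untouched, and every term for a position $>i+1$ is unchanged as well, because its prefix weight contains the swap-symmetric factor $\prod_{j\le i+1}(1-\varphi_{v^{j}}) = (1-\varphi_{a})(1-\varphi_{b})\prod_{j<i}(1-\varphi_{v^{j}})$. Writing $P = \prod_{j<i}(1-\varphi_{v^{j}})\ge 0$ for the common prefix weight, only the two middle terms differ, and a short computation collapses their difference to
\[
\E[C](\mathbf{u}_{B}) - \E[C](\mathbf{u}_{B}') = P\bigl(f(a)\varphi_{b} - f(b)\varphi_{a}\bigr).
\]

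Then I would divide through by $\varphi_{a}\varphi_{b}>0$ to see that keeping $a$ before $b$ is weakly preferable exactly when $f(a)/\varphi_{a} \le f(b)/\varphi_{b}$. A one-line manipulation gives $2f(v)/\varphi_{v} + 1 = N_{v}\frac{2-\varphi_{v}}{\varphi_{v}}$, so this index comparison is \emph{identical} to the inequality in \eqref{eq: independence_optimality}. The iff then follows from the standard sorting logic. For ``only if'': if \eqref{eq: independence_optimality} is violated at some consecutive pair, then (with $P>0$, which holds whenever the $\varphi_{i}$ are strictly below $1$) the transposition strictly lowers the cost, so $\mathbf{u}_{B}$ cannot be optimal. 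For ``if'': any block policy can be carried into a policy sorted in non-decreasing index order by a sequence of adjacent transpositions of out-of-order pairs, each of which weakly decreases the cost, so every ordering satisfying \eqref{eq: independence_optimality} attains the minimal block-policy cost and is therefore optimal overall.

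The main obstacle is not the algebra but making the locality claim airtight, in particular verifying that the suffix contributions are genuinely invariant under the swap; this is exactly where independence, through the product form of Corollary \ref{cor: independence_block}, is doing the work. I would also take care with the degenerate boundary case $\varphi_{v}=1$, where $P$ can vanish and a transposition leaves the cost unchanged: there one must check that ties in the index are consistent with the weak inequality in \eqref{eq: independence_optimality}, so that the ``only if'' direction is not broken.
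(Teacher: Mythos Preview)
Your proposal is correct and follows essentially the same route as the paper: reduce to block policies via Theorem~\ref{thm: blockpolicy}, use the closed-form cost from Corollary~\ref{cor: independence_block}, and run an adjacent-interchange argument to obtain the index $N_v(2-\varphi_v)/\varphi_v$. The only cosmetic differences are that the paper phrases sufficiency via ``any two orderings satisfying the condition differ only by tie-permutations'' rather than your bubble-sort formulation, and that you are slightly more careful than the paper about the degenerate case $\varphi_v=1$ where the prefix weight $P$ can vanish.
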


\begin{proof}{Proof.}
See Section \ref{proof: independence_optimality}.
\end{proof}

We note that this policy is not greedy, i.e., it does not maximize the probability of finding the red marble at each stage.  Rather, the optimality condition in equation \eqref{eq: independence_optimality} balances the probability of immediately drawing a red marble with the probability of finding a red marble in successive draws from the same urn.

To gain intuition, consider a two-urn example in which each urn has the same probability of containing a red marble ($\varphi_{1}=\varphi_{2}$), but urn 1 has fewer marbles ($N_{1} < N_{2}$).  In this case the optimality condition in equation \eqref{eq: independence_optimality} would have us initially draw marbles from urn 1, which has the same probability of giving us the red marble as urn 2 but requires fewer draws.  

Alternatively, consider the case in which the two urns have the same number of marbles but $\varphi_{1}<\varphi_{2}$.  In this case, the optimal policy according to equation \eqref{eq: independence_optimality} would have us draw from urn 2 first, which is more likely than urn 1 to give us a red marble in the same number of draws.

In order to more clearly distinguish between a greedy policy and an optimal one, we provide one more example.  Consider the following independent multi-urn search problem with two urns.  Urn $1$ contains $N_{1}=1$ marble and has probability $\varphi_{1}=\frac{9}{16}$ of containing a red marble.  Urn $2$ contains $N_{2}=2$ marbles and has probability $\varphi_{2}=1$ of containing a red marble.   This problem admits two block policies: $\mathbf{u}_{B} = (2,1)$ and $\tilde{\mathbf{u}}_{B}=(1,2)$.  

Policy $\tilde{\mathbf{u}}_{B}$ is a greedy policy;  urn 1, which has the highest immediate probability of producing a red marble ($\frac{9}{16}$), is queried before urn 2.  The expected number of blue marbles drawn using policy $\tilde{\mathbf{u}}_{B}$ is 
\[
%\E[\tilde{C}|\vu=\{1,2,2\}]
\E[\tilde{C}] = \frac{7}{16}+\left(\frac{7}{16}\right)\left(\frac{1}{2}\right) = \frac{21}{32}.
\]
Alternatively, if we follow policy $\mathbf{u}_{B}$ and draw from urn 2 first, then expected cost is
\[
%\E[C|\vu=\{2,2,1\}]
\E[C]= \frac{1}{2},
\]
which is optimal.
%Even though urn $1$ provides the highest immediate probability of finding a red marble, in this case drawing from urn $2$ first minimizes the total expected draws.
 By accepting a slightly lower probability in the first draw, this policy guarantees that the red marble is found in at most two draws.  The optimality condition  in Theorem \ref{thm: independence_optimality}, equation \eqref{eq: independence_optimality} provides the best balance between the immediate and long-term benefits of each query.

%%%%%%%%%%%%%%%%%%%%%%%%%%%%%%%%%%%%%%%%%%%%%%%%%%%%%%%%%%%%%%%%%%%%%%%%%%%%%%%%%%
\subsection{One Marble} \label{subsec: single_marble}

We now turn our attention to another special case of the multi-urn search problem in which we limit the total number of red marbles among all of the urns to a single marble.  In our network search scenario, this constraint would follow from assuming that the target user is connected to \emph{at most} one of the known accounts on Mary's list.  This might not be a valid assumption for Mary to make, but it might apply to other search scenarios both in and out of the network context.  

For example, suppose law enforcement investigators have evidence that a suspect made a single phone call from an unknown phone number during a certain period.  Having obtained phone records from all likely recipients, they must efficiently search for the phone call of interest within the records of these likely recipients.  

For a non-network example, suppose a hotel custodian, after servicing all of the hotel rooms, realizes he left his car keys in one of the rooms.  The hotel might consist of several wings, each with different numbers of rooms, and the custodian might feel the loss was more probable in certain wings.  The custodian wants to search the rooms efficiently for his keys, in order to find them before new customers begin to arrive.

This one-marble constraint imposes the strongest negative correlations between the urns: if the red marble is in urn $i$ then it cannot be in $j$, i.e.,
\[
P(A_{j}|A_{i}) = 0, \qquad i \neq j.
\]
Another way to characterize this constraint is to state that the events $A_{1},A_{2},\ldots,A_{|\mathcal{V}|}$ are disjoint.  We now formalize this notion in a definition.

\begin{mydef} \label{def: single_marble}
A single marble multi-urn search problem is a multi-urn search problem for which
\[
\varphi_{U} = 0 \qquad \forall \ U \subseteq \mathcal{V} \ \mathrm{such \ that} \ |U| > 1.
\]
\end{mydef}

We now analyze of the single marble search problem.  First we observe that
\[
\sum_{i \in \mathcal{V}} \varphi_{i} \leq 1.
\]
We allow for the possibility that this sum is strictly less than one, implying there is a chance that none of the urns contain the red marble.  If the sum is equal to one, then the assumption is that exactly one of the urns contains one red marble.  Theorem 5 specifies the single marble urn probabilities for an arbitrary state $\vx(t)$.

\begin{mythm} \label{thm: single_marble}
{\bf (Single Marble Urn Probabilities).}
Given a single marble multi-urn search problem and a search policy $\mathbf{u}$.  Then, the probability that a red marble is in urn $i$ given state $\vx(t)$, and given no red marble has been found in the first $t$ queries, is
\[
\P(A_{i}\mid\vx(t))
= \frac{
	\left(
		1-\frac{
			x_{i}(t)
		}{
			N_{i}
		}
	\right)
	\varphi_{i}
}
{
	1- \sum_{
		j \in \mathcal{V}
	}
	\frac{
		x_{j}(t)\varphi_{j}
	}{
		N_{j}
	}
}.
\]
The probability that the red marble is in all of the urns in any subset $U \subset \mathcal{V}$, where $|U|>1$ is
\[
\P\left(\bigcap_{i \in U} A_{i}\;\middle|\;\vx(t)\right) = 0.
\]
\end{mythm}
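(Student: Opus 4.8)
The plan is to obtain both single-marble formulas as a direct specialization of Theorem \ref{thm: genprob}. The key structural fact is the defining property of a single marble problem: $\varphi_S = 0$ whenever $|S| > 1$. Consequently, in every subset-sum appearing in Theorem \ref{thm: genprob}, only the empty set and the singletons $S = \{k\}$ contribute nonzero terms, since $\varphi_\emptyset = 1$ and $\varphi_{\{k\}} = \varphi_k$ while all larger subsets are annihilated by the vanishing $\varphi_S$.

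First I would simplify the common denominator. Retaining only $S = \emptyset$ and $S = \{k\}$ in $\sum_{S \subseteq \mathcal{V}} (-1)^{|S|} \varphi_S \prod_{j \in S} x_j(t)/N_j$ collapses it to $1 - \sum_{j \in \mathcal{V}} \varphi_j x_j(t)/N_j$, which is exactly the denominator claimed in the theorem. By Corollary \ref{cor: cost} this quantity also equals $\P(C > t-1)$, which is strictly positive whenever state $\vx(t)$ has actually been reached, so the ratio is well defined.

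Next I would evaluate the numerator for $U = \{i\}$. The inner sum runs over $S \supseteq \{i\}$ weighted by $(-1)^{|S|-1}\varphi_S$; the only such $S$ with $\varphi_S \neq 0$ is $S = \{i\}$ itself, contributing $\varphi_i$, so the numerator reduces to $(1 - x_i(t)/N_i)\varphi_i$. Dividing by the simplified denominator yields the first formula. For a subset $U$ with $|U| > 1$, every $S \supseteq U$ satisfies $|S| \geq |U| > 1$, hence $\varphi_S = 0$ for all terms in the numerator sum; the numerator is therefore identically zero, giving $\P(\bigcap_{i \in U} A_i \mid \vx(t)) = 0$.

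There is essentially no hard step here: the argument is a mechanical cancellation, and the only thing to be careful about is bookkeeping of the index sets, namely that $S = \{i\}$ is genuinely the unique surviving term and that the empty-product conventions are handled correctly. As an independent sanity check, I would re-derive the same ratio directly from Bayes' rule using the disjointness of $A_1, \dots, A_{|\mathcal{V}|}$: conditioned on $A_i$, reaching state $\vx(t)$ without a red draw has probability $1 - x_i(t)/N_i$ (every other urn being all-blue contributes a factor of one), while conditioned on no red marble anywhere it has probability one. Normalizing the masses $\{(1-x_k(t)/N_k)\varphi_k\}_k$ together with the all-blue mass $1 - \sum_k \varphi_k$ reproduces the denominator $1 - \sum_k \varphi_k x_k(t)/N_k$ and confirms the stated expression.
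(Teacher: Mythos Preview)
Your proposal is correct and follows exactly the paper's own route: the paper simply states that the result follows immediately from the definition of a single marble multi-urn search problem and Theorem~\ref{thm: genprob}, and your write-up spells out precisely that specialization (only $S=\emptyset$ and the singletons survive in each subset sum). The additional Bayes' rule sanity check is a nice independent verification but is not needed for the argument.
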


\begin{proof}{Proof.}
This result follows immediately from the definition of a single marble multi-urn search problem and Theorem \ref{thm: genprob}. \halmos
\end{proof}

Because the red ball can only be in one urn, all joint probabilities are zero.  This greatly
simplifies our analysis and allows us to obtain closed form expressions
for the probability of finding a red ball and the expected cost of a block policy, which are stated in the
following results.

\begin{mycor} \label{cor: single_marble}
Given a single marble multi-urn search problem and a valid search policy $\mathbf{u}$, the probability distribution of $w(\vx(t),u(t))$, conditioned on not having found a red marble in a previous stages, is
\[
\P(w(\vx(t),u(t))=k)
= \begin{cases}
	\frac{
		1-\sum_{
			j \in \mathcal{V}
		}
		\left(
			\frac{
				x_{j}(t+1)\varphi_{j}
			}
			{
				N_{j}
			}
		\right)		
	}{
		1-\sum_{
			j \in \mathcal{V}
		}
		\left(
			\frac{
				x_{j}(t)\varphi_{j}
			}
			{
				N_{j}
			}
		\right)
	},
& k = 0
\\
\frac{1}{N_{u(t)}}
\left(
	\frac{
		\varphi_{u(t)}
	}{
		1-\sum_{
			j \in \mathcal{V}
		}
		\left(
			\frac{
				x_{j}(t)\varphi_{j}
			}
			{
				N_{j}
			}
			\right)
	}
\right) ,
& k=1.
\end{cases}
\]
\end{mycor}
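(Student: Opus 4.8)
The plan is to obtain both cases by direct substitution, using the closed form for $\P(A_{u(t)}\mid\vx(t))$ just established in Theorem \ref{thm: single_marble} together with the multiplication rule in Equation \eqref{eq: w1}. For the $k=1$ case I would begin from
\[
\P(w(\vx(t),u(t))=1)=\left(\frac{1}{N_{u(t)}-x_{u(t)}(t)}\right)\P(A_{u(t)}\mid\vx(t))
\]
and insert the single-marble expression $\P(A_{u(t)}\mid\vx(t)) = \frac{(1-x_{u(t)}(t)/N_{u(t)})\varphi_{u(t)}}{1-\sum_{j}x_{j}(t)\varphi_{j}/N_{j}}$. Rewriting $1 - x_{u(t)}(t)/N_{u(t)} = (N_{u(t)}-x_{u(t)}(t))/N_{u(t)}$, the factor $N_{u(t)}-x_{u(t)}(t)$ cancels against the leading fraction, leaving exactly $\frac{1}{N_{u(t)}}\cdot\frac{\varphi_{u(t)}}{1-\sum_{j}x_{j}(t)\varphi_{j}/N_{j}}$, the claimed $k=1$ expression.

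For the $k=0$ case I would take the complement $\P(w(\vx(t),u(t))=0) = 1 - \P(w(\vx(t),u(t))=1)$. Writing the common denominator as $D = 1-\sum_{j}x_{j}(t)\varphi_{j}/N_{j}$, this gives $\P(w=0) = (D - \varphi_{u(t)}/N_{u(t)})/D$. The one step requiring care is recognizing that the numerator is precisely $D$ evaluated at stage $t+1$: since drawing a blue marble from $u(t)$ sends $\vx(t+1)=\vx(t)+\mathbf{e}_{u(t)}$, only the $u(t)$-coordinate advances, so $\sum_{j}x_{j}(t+1)\varphi_{j}/N_{j} = \sum_{j}x_{j}(t)\varphi_{j}/N_{j} + \varphi_{u(t)}/N_{u(t)}$, whence $D - \varphi_{u(t)}/N_{u(t)} = 1 - \sum_{j}x_{j}(t+1)\varphi_{j}/N_{j}$. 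This reproduces the claimed $k=0$ ratio.

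As an alternative route that avoids invoking Theorem \ref{thm: single_marble} explicitly, I could instead specialize Corollary \ref{cor: drawprob}: imposing $\varphi_{S}=0$ for all $|S|>1$ collapses each inclusion-exclusion sum $\sum_{S}(-1)^{|S|}\varphi_{S}\prod_{i\in S}x_{i}/N_{i}$ to its $S=\emptyset$ and singleton terms, namely $1 - \sum_{j}x_{j}\varphi_{j}/N_{j}$, and the identical cancellation recovers both cases. Either way there is no genuine obstacle here; the result is a routine substitution, and the only point demanding a moment's attention is the bookkeeping in the $k=0$ case, verifying that the single state-transition identity correctly re-expresses the numerator in terms of the stage-$(t+1)$ occupancies. \halmos
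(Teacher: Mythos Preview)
Your proposal is correct and follows essentially the same approach as the paper: the corollary is stated without separate proof, as it follows immediately by substituting the single-marble urn probability from Theorem \ref{thm: single_marble} into Equations \eqref{eq: w1}--\eqref{eq: w0} (equivalently, by specializing Corollary \ref{cor: drawprob} to $\varphi_S=0$ for $|S|>1$). The cancellation you identify for $k=1$ and the state-transition bookkeeping for $k=0$ are exactly the routine steps this entails.
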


\begin{mycor} \label{cor: single_marble_block}
Given a single marble multi-urn problem and a block policy $\mathbf{u}_{B}=(v^{1},v^{2},\ldots,v^{|\mathcal{V}|}), \ v^{i} \in \mathcal{V}$, such that $\tau(i)=\sum_{j=1}^{i-1} N_{v^{j}}$ is the first stage in which urn $v^{i}$ is queried. Then, the probability of not finding the red marble before reaching stage $\tau(i)$ is
\[
\prod_{t=0}^{\tau(i)} \P(w(\vx(t),u(t))=0) = 
1-\sum_{j=1}^{i-1} \varphi_{v^{j}},
\]
the contribution of urn $v^{i}$ to the total expected cost is 
\[
\sum_{k=\tau(i)}^{\tau(i)+N_{i}-1} \prod_{t=0}^{k} \P(w(\vx(t),u(t))=0) = \left(
	N_{i}-\frac{(N_{i}+1)\varphi_{i}}{2}
	-N_{i}\sum_{j=1}^{i-1}\varphi_{j}
\right),
\]
and the total expected cost is
\[
\E[C] = \sum_{i = 1}^{|\mathcal{V}|}
\left(
N_{v^{i}}
-\frac{(N_{v^{i}}+1)\varphi_{v^{i}}}
{2}
\right)
-\sum_{i=1}^{|\mathcal{V}|-1} \sum_{j=i+1}^{|\mathcal{V}|}
%\left(
N_{v^{j}}\varphi_{v^{i}}
%\right)
.
\]
\end{mycor}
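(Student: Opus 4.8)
The plan is to reduce all three identities to the single ingredient supplied by Corollary \ref{cor: cost}, namely the survival probability $\P(C>k)=\prod_{t=0}^{k}\P(w(\vx(t),u(t))=0)$, specialized to the single marble setting. Under the single marble hypothesis $\varphi_{S}=0$ whenever $|S|>1$, every term of the inclusion--exclusion sum in Corollary \ref{cor: cost} with $|S|\ge 2$ vanishes, so only $S=\emptyset$ and the singletons survive and
\[
\P(C>k)=1-\sum_{j\in\mathcal{V}}\varphi_{j}\frac{x_{j}(k+1)}{N_{j}}.
\]
The key simplification is that under a block policy $\mathbf{u}_{B}=(v^{1},\ldots,v^{|\mathcal{V}|})$ the state vector is completely explicit: at any stage whose preceding draws have exhausted urns $v^{1},\ldots,v^{i-1}$ and drawn $m$ marbles from $v^{i}$, we have $x_{v^{j}}=N_{v^{j}}$ for $j<i$, $x_{v^{i}}=m$, and $x_{v^{j}}=0$ for $j>i$. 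The path independence noted after Theorem \ref{thm: genprob} guarantees this is all that matters.

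For the first identity, I would evaluate the displayed survival probability at the state reached just before the first draw from urn $v^{i}$, namely $\vx(\tau(i))$, where only $x_{v^{1}},\ldots,x_{v^{i-1}}$ are nonzero and each equals its urn's full count. Then every ratio $x_{v^{j}}/N_{v^{j}}$ collapses to $1$ for $j<i$ and to $0$ otherwise, leaving $1-\sum_{j=1}^{i-1}\varphi_{v^{j}}$, which is exactly the probability of surviving to the start of the $v^{i}$-block.

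For the second identity, I use the representation $\E[C]=\sum_{k}\P(C>k)$ from equation \eqref{eq: simplecost}: the contribution of urn $v^{i}$ is the sum of $\P(C>k)$ over the stages $k=\tau(i),\ldots,\tau(i)+N_{v^{i}}-1$ during which $v^{i}$ is queried. Writing $m=k-\tau(i)+1$ for the number of marbles drawn from $v^{i}$ by stage $k+1$, the displayed formula gives $\P(C>k)=\bigl(1-\sum_{j=1}^{i-1}\varphi_{v^{j}}\bigr)-\frac{\varphi_{v^{i}}}{N_{v^{i}}}m$. Summing over $m=1,\ldots,N_{v^{i}}$ and using $\sum_{m=1}^{N_{v^{i}}}m=\frac{N_{v^{i}}(N_{v^{i}}+1)}{2}$ yields $N_{v^{i}}-\frac{(N_{v^{i}}+1)\varphi_{v^{i}}}{2}-N_{v^{i}}\sum_{j=1}^{i-1}\varphi_{v^{j}}$, the claimed contribution.

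Finally, summing these per-urn contributions over $i$ gives the total cost as $\sum_{i}\bigl(N_{v^{i}}-\frac{(N_{v^{i}}+1)\varphi_{v^{i}}}{2}\bigr)-\sum_{i}N_{v^{i}}\sum_{j=1}^{i-1}\varphi_{v^{j}}$, and the only remaining step is the purely combinatorial reindexing $\sum_{i=1}^{|\mathcal{V}|}N_{v^{i}}\sum_{j=1}^{i-1}\varphi_{v^{j}}=\sum_{i=1}^{|\mathcal{V}|-1}\sum_{j=i+1}^{|\mathcal{V}|}N_{v^{j}}\varphi_{v^{i}}$, obtained by swapping the roles of the two indices, since both sides sum $N_{v^{\mathrm{larger}}}\varphi_{v^{\mathrm{smaller}}}$ over all ordered index pairs. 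I expect no genuine obstacle; the only delicate point is the stage bookkeeping, in particular reading the product limit in the first identity as survival to the start of the $v^{i}$-block rather than after its first draw, and keeping the off-by-one between ``marbles drawn'' and ``surviving to'' consistent throughout.
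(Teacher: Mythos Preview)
Your proof is correct and follows exactly the route the paper intends: the corollary is stated in the paper without proof, as it is meant to follow immediately from specializing Corollary~\ref{cor: cost} to the single-marble setting ($\varphi_{S}=0$ for $|S|\ge 2$) and plugging in the explicit block-policy state vectors, which is precisely what you do. Your remark about the stage bookkeeping in the first identity is also apt: the displayed upper limit $\tau(i)$ in the product should be read as $\tau(i)-1$ to match the verbal description ``before reaching stage $\tau(i)$,'' and you interpret this correctly.
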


Theorem \ref{thm: single_marble_optimality} characterizes the optimal solution in the single marble case.

\begin{mythm} \label{thm: single_marble_optimality}
{\bf (Single Marble Optimality).}
Given a single marble multi-urn search problem,
a block policy 
\[
\mathbf{u}_{B} = (v^{1},v^{2},\ldots,v^{|\mathcal{V}|})
\]
is an optimal policy if and only if
\begin{equation}
\frac{\varphi_{v^{i}}}
{N_{v^{i}}}
\geq 
\frac{\varphi_{v^{i+1}}}
{N_{v^{i+1}}}, \quad  i=1,2,\ldots,|\mathcal{V}|. \label{eq: single_marble_optimality}
\end{equation}
\end{mythm}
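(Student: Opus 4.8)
The plan is to reduce the claim to a sorting problem and settle it with an adjacent-transposition exchange argument. By Theorem~\ref{thm: blockpolicy} an optimal policy may be taken to be a block policy, so it suffices to compare block policies $\mathbf{u}_B=(v^1,\dots,v^{|\mathcal{V}|})$ using the closed-form cost from Corollary~\ref{cor: single_marble_block}. The first observation is that the single sum $\sum_{i=1}^{|\mathcal{V}|}\bigl(N_{v^i}-(N_{v^i}+1)\varphi_{v^i}/2\bigr)$ is \emph{order-independent}: each summand depends only on the urn placed in position $i$, so the sum equals $\sum_{k\in\mathcal{V}}\bigl(N_k-(N_k+1)\varphi_k/2\bigr)$, a constant $K$ that does not depend on the ordering. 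Consequently $\E[C]=K-D(\mathbf{u}_B)$ with $D(\mathbf{u}_B)=\sum_{i<j}N_{v^j}\varphi_{v^i}$, and minimizing the expected cost is equivalent to \emph{maximizing} the pairwise sum $D$ over all orderings of the urns.

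Next I would analyze how $D$ changes under a swap of the urns in adjacent positions $k$ and $k+1$. Writing $a=v^k$ and $b=v^{k+1}$, the contributions to $D$ from pairs involving a position $\ell<k$ combine to $(N_a+N_b)\varphi_{v^\ell}$, and those from pairs involving a position $m>k+1$ combine to $N_{v^m}(\varphi_a+\varphi_b)$; both are symmetric in $a$ and $b$ and hence unaffected by the swap. The only term that changes is the one indexed by the pair $(k,k+1)$, equal to $N_b\varphi_a$ before the swap and $N_a\varphi_b$ after it. Thus the swap changes $D$ by $N_a\varphi_b-N_b\varphi_a$, which is positive exactly when $\varphi_b/N_b>\varphi_a/N_a$; equivalently, moving the urn with the larger ratio $\varphi/N$ earlier never decreases $D$ and strictly increases it unless the two ratios coincide.

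With this exchange lemma in hand, the two directions follow. For \emph{necessity}, if some adjacent pair of an optimal block policy violated the stated condition, i.e. $\varphi_{v^i}/N_{v^i}<\varphi_{v^{i+1}}/N_{v^{i+1}}$, then swapping positions $i$ and $i+1$ would strictly increase $D$ and hence strictly decrease $\E[C]$, contradicting optimality; so every optimal block policy satisfies \eqref{eq: single_marble_optimality}. For \emph{sufficiency}, I would argue that any ordering satisfying \eqref{eq: single_marble_optimality} attains the global maximum of $D$: starting from an arbitrary ordering, repeatedly swapping an adjacent pair that strictly violates the condition is a bubble sort that terminates (there are finitely many orderings and $D$ strictly increases at each such swap) at an ordering whose ratios are non-increasing, so non-increasing orderings maximize $D$. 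Combining the two directions, the optimal block policies are precisely those ordering the urns by non-increasing $\varphi/N$.

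The step I expect to be the main obstacle is turning the \emph{local} optimality furnished by adjacent swaps into \emph{global} optimality, together with the correct treatment of ties. The delicate point is that \eqref{eq: single_marble_optimality} uses a weak inequality, so there may be several distinct optimal orderings; the exchange computation resolves this cleanly because when $\varphi_a/N_a=\varphi_b/N_b$ the quantity $N_a\varphi_b-N_b\varphi_a$ vanishes, so permuting urns of equal ratio leaves $D$---and therefore the expected cost---unchanged. This is exactly what makes every non-increasing ordering, and no other, a minimizer.
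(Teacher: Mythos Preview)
Your proposal is correct and follows essentially the same adjacent-transposition exchange argument as the paper: both reduce to block policies via Theorem~\ref{thm: blockpolicy}, use the closed-form cost from Corollary~\ref{cor: single_marble_block}, and show that swapping adjacent urns $v^i,v^{i+1}$ changes the expected cost by $N_{v^i}\varphi_{v^{i+1}}-N_{v^{i+1}}\varphi_{v^i}$, from which necessity and sufficiency both follow. Your explicit decomposition $\E[C]=K-D(\mathbf{u}_B)$ into an order-independent constant and the pairwise sum $D=\sum_{i<j}N_{v^j}\varphi_{v^i}$ is a slightly cleaner packaging than the paper's direct cost-difference computation, but the underlying argument is the same.
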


\begin{proof}{Proof.}
See Section \ref{proof: single_marble_optimality}.
\end{proof}

The optimality condition given in equation \eqref{eq: single_marble_optimality} leads to a greedy policy in which, at each stage, the marble that is drawn is the one that is most likely to be red.  At stage $t=0$ this is certainly true, as the probability of drawing a red marble from any urn $i$ in the first draw is $\varphi_{i}/N_{i}$, which is exactly what the optimality condition optimizes.  In the next section we show that that this condition is maintained through state transitions in an optimal policy.  If drawing a marble from urn $i$  has the highest probability of producing a red marble in stage $t$, then (assuming the urn has at least one marble remaining) drawing another marble from the same urn maximizes the probability of finding a red marble in stage $t+1$.

\subsection{Monotonicity Properties}

We now provide a few monotonicity properties that give additional insight into the dynamics of multi-urn search problems, as well as block policy optimality.

\begin{mythm} \label{thm: monotonicity}
{\bf (Monotonicity).}
Given a multi-urn search problem and a search policy $\vu$, the following inequalities hold:
\begin{enumerate}
\item For any subset of urns $U\subseteq \mathcal{V}$ such that $u(t) \in U$,
\[
\P\left(\bigcap_{i \in U} A_{i}\;\middle|\;\vx(t)\right) \geq \P\left(\bigcap_{i \in U} A_{i}\;\middle|\;\vx(t+1)\right),
\]
with equality holding only in cases in which $\P\left(A_{u(t)}\mid\vx(t)\right) = 1$ or $\P\left(\bigcap_{i \in U}A_{i}\mid\vx(t)\right)=0$.
\item For any stage $t$ for which urn $u(t)$ has more than one marble remaining, i.e., $N_{u(t)}-x_{u(t)}(t)>1$,
\[
\P(w(\vx(t),u(t))=1) \leq \P (w(\vx(t+1),u(t))=1),
\]
with equality holding only when $\P(w(\vx(t),u(t))=1)=0$.
\end{enumerate}
\end{mythm}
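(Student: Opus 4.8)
The plan is to prove both parts by a direct Bayesian update computation, exploiting the one-step state transition $\vx(t+1) = \vx(t) + \mathbf{e}_{u(t)}$ that occurs when a blue marble is drawn at stage $t$. The whole argument reduces to tracking a single multiplicative update factor, and Part 2 will follow from the explicit formula produced in Part 1.

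For Part 1, I would fix $U \subseteq \mathcal{V}$ with $u(t) \in U$ and write $E = \bigcap_{i \in U} A_i$, noting $E \subseteq A_{u(t)}$. Let $B$ denote the event that the marble drawn at stage $t$ is blue, so that conditioning on $\vx(t+1)$ is the same as conditioning on $\vx(t)$ together with $B$. Applying Bayes' rule gives
\[
\P(E \mid \vx(t+1)) = \frac{\P(B \mid E, \vx(t))\,\P(E \mid \vx(t))}{\P(B \mid \vx(t))}.
\]
Writing $n = N_{u(t)} - x_{u(t)}(t)$ for the marbles remaining in urn $u(t)$ and $p = \P(A_{u(t)} \mid \vx(t))$, the numerator factor is $\P(B \mid E, \vx(t)) = 1 - \tfrac{1}{n}$: conditioning on $E$ forces urn $u(t)$ to contain exactly one red marble among its $n$ remaining (uniformly drawn) marbles, and the draw from $u(t)$ depends only on that urn's composition, so the extra conditioning on the other $A_i$ is irrelevant. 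The denominator is $\P(B \mid \vx(t)) = 1 - \tfrac{p}{n}$ from Equation \eqref{eq: w0}. Hence
\[
\P(E \mid \vx(t+1)) = \P(E \mid \vx(t)) \cdot \frac{n-1}{n-p}.
\]
Since $p \leq 1$ implies $n - 1 \leq n - p$, the update factor lies in $[0,1]$, which establishes the inequality; equality forces either the factor to equal $1$, i.e. $p = 1$, or $\P(E \mid \vx(t)) = 0$, exactly the two stated cases.

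For Part 2, I would specialize the Part 1 identity to $U = \{u(t)\}$ to obtain $\P(A_{u(t)} \mid \vx(t+1)) = p\,\tfrac{n-1}{n-p}$, then substitute into the stage-$(t+1)$ red-draw probability from Equation \eqref{eq: w1}. Because urn $u(t)$ now has $n-1$ marbles remaining,
\[
\P(w(\vx(t+1),u(t))=1) = \frac{1}{n-1}\,\P(A_{u(t)} \mid \vx(t+1)) = \frac{1}{n-1}\cdot p\,\frac{n-1}{n-p} = \frac{p}{n-p},
\]
which I would compare with $\P(w(\vx(t),u(t))=1) = \tfrac{p}{n}$. As the hypothesis $n > 1 \geq p$ keeps both denominators positive and $n - p \leq n$, we get $\tfrac{p}{n} \leq \tfrac{p}{n-p}$, with equality exactly when $p = 0$, i.e. when $\P(w(\vx(t),u(t))=1) = 0$.

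The main obstacle is conceptual rather than computational: Part 2 asserts that drawing a blue marble from urn $u(t)$ makes the \emph{next} red draw from the same urn \emph{more} likely, even though Part 1 shows that the posterior probability $\P(A_{u(t)} \mid \vx(t))$ that the urn contains a red marble at all has just \emph{decreased}. The resolution is the exact cancellation above: the Bayesian downweighting factor $\tfrac{n-1}{n-p}$ is precisely offset by the $\tfrac{1}{n-1}$ coming from having one fewer marble to draw from, leaving the clean value $\tfrac{p}{n-p}$. Confirming that these two competing effects combine correctly, and that $n > 1$ keeps every denominator positive so the equality analysis is valid, is the only place where care is required.
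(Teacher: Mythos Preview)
Your proof is correct. Part 1 is essentially identical to the paper's argument: the paper invokes the recursion \eqref{eq: probrecursion} (which is itself derived by the same total-probability/Bayes computation you carry out) and, since $u(t)\in U$, obtains exactly your multiplicative factor $\frac{n-1}{n-p}$ and the same equality analysis.

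Part 2 is where you diverge. The paper does \emph{not} reuse Part 1; instead it works with the global inclusion--exclusion expression from Corollary~\ref{cor: cost}, sets $\alpha=\P(C>t-1)$ and $\beta$ equal to the one-step increment so that $\alpha+\beta=\P(C>t)$ and $\alpha+2\beta=\P(C>t+1)$, and then appeals to the elementary inequality $(\alpha+\beta)^2>\alpha(\alpha+2\beta)$ (i.e.\ $\beta^2>0$) to compare the two ratios $\P(w=0)$ at stages $t$ and $t+1$ via \eqref{eq: cost_property}. Your route is more direct and self-contained: specializing Part 1 to $U=\{u(t)\}$ gives $\P(A_{u(t)}\mid\vx(t+1))=p\,\frac{n-1}{n-p}$, and the $n-1$ in the Bayes factor cancels exactly against the $\frac{1}{n-1}$ from \eqref{eq: w1}, leaving the clean comparison $\frac{p}{n}\le\frac{p}{n-p}$. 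This avoids the inclusion--exclusion machinery entirely and makes the ``two competing effects'' heuristic you describe into a literal cancellation. The paper's approach, by contrast, ties the monotonicity directly to the structure of the survival probabilities $\P(C>k)$, which is perhaps more in keeping with how the rest of the analysis is organized, but is heavier for this particular statement.
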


\begin{proof}{Proof.}
See Section \ref{proof: monotonicity}.
\end{proof}

These monotonicity properties provide intuition into why optimal block policies exist.  Suppose at stage $t$ a blue marble is drawn from urn $u(t)$.  At stage $t+1$, the probability of urn $u(t)$ containing a red marble has decreased as a result of this new information.  However, the probability that the next marble drawn from urn $u(t)$ is red has \emph{increased} from the previous stage.  If drawing from urn $u(t)$ in stage $t$ had a high probability of returning a red marble, drawing another marble from $u(t)$ in stage $t+1$ has an even higher probability of producing a red marble.

%Drawing a blue marble from urn $u(t)$ in stage $t$ increases the probability we draw a red marble from urn $u(t)$ in stage $t+1$.  
In the case of independent urns, this property provides justification for using a block policy.  Suppose urn $u(t)$ has multiple marbles in it and is optimal at stage $t$, and a blue marble is drawn from this urn.  At stage $t+1$ the probability of drawing a red marble from urn $u(t)$ has increased, while all other urn and marble probabilities have remained unchanged from the previous stage $t$.  It follows that it would continue to be optimal to draw from urn $u(t)$.

If we allow for correlations among the urns, however, drawing a blue marble from urn $u(t)$ might also increase the probability of drawing a red marble from other urns in the following stage.  In the single marble multi-urn search problem, drawing a blue marble from urn $u(t)$ in stage $t$ increases the probability of finding a red marble in each of the other urns in stage $t+1$.  We now provide our final theoretical result, which states that the rate of probability growth in a queried urn is always at least as large as the rate of probability growth in any other urn.

\begin{mythm} \label{thm: probrate}
{\bf (Marble Probability Bound).}
Given a multi-urn search problem and a search policy $\vu$, the following inequality holds:
\[
\frac{\P\left(w(\vx(t+1),u(t))=1\right)}{\P\left(w(\vx(t),u(t))=1\right)}
\geq
\frac{\P\left(w(\vx(t+1),i)=1\right)}{\P\left(w(\vx(t),i)=1\right)}.
\]
\end{mythm}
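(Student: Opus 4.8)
The plan is to reduce the claimed comparison to a single sign condition on an inclusion--exclusion sum, using the explicit red-draw probabilities from Corollary~\ref{cor: drawprob}. Write $u = u(t)$ for the queried urn and abbreviate the common denominator by
\[
D(t) = \sum_{S \subseteq \mathcal{V}} (-1)^{|S|}\, \varphi_{S} \prod_{j \in S} \frac{x_{j}(t)}{N_{j}},
\]
which by Corollary~\ref{cor: cost} equals $\P(C > t-1)$ and is strictly positive whenever the relevant stages are reached without a red draw. For every urn $i$, Corollary~\ref{cor: drawprob} lets me write $\P(w(\vx(t),i)=1) = \tfrac{1}{N_{i}} f_{i}(t) / D(t)$, where
\[
f_{i}(t) = \sum_{\{S \subseteq \mathcal{V}: \, i \in S\}} (-1)^{|S|-1}\, \varphi_{S} \prod_{j \in S \setminus \{i\}} \frac{x_{j}(t)}{N_{j}}.
\]
Since this probability is nonnegative and $D(t) > 0$, I have $f_{i}(t) \ge 0$ throughout.

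The structural fact driving everything is that drawing from urn $u$ changes only one coordinate of the state: $x_{u}(t+1) = x_{u}(t)+1$, while $x_{j}(t+1) = x_{j}(t)$ for all $j \ne u$. First I would dispose of the queried urn itself. In $f_{u}(t)$ the product runs over $S \setminus \{u\}$, which never contains $u$, so $f_{u}$ does not depend on $x_{u}$ and therefore $f_{u}(t+1) = f_{u}(t)$. This immediately yields the left-hand side in closed form, $\frac{\P(w(\vx(t+1),u)=1)}{\P(w(\vx(t),u)=1)} = \frac{D(t)}{D(t+1)}$.

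Next I would treat an arbitrary urn $i \ne u$. Splitting the sum defining $f_{i}(t)$ according to whether $u \in S$, I can write $f_{i}(t) = P_{i} - \frac{x_{u}(t)}{N_{u}} Q_{i}$, where $P_{i}$ gathers the terms with $u \notin S$ and
\[
Q_{i} = \sum_{S'' \subseteq \mathcal{V}\setminus\{i,u\}} (-1)^{|S''|}\, \varphi_{\{i,u\}\cup S''} \prod_{j \in S''} \frac{x_{j}(t)}{N_{j}}
\]
gathers the sign-adjusted terms with $u \in S$; crucially neither $P_{i}$ nor $Q_{i}$ depends on $x_{u}$. Incrementing $x_{u}$ by one then gives the exact difference $f_{i}(t) - f_{i}(t+1) = \frac{1}{N_{u}} Q_{i}$. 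Comparing $Q_{i}$ with Theorem~\ref{thm: genprob} applied to the pair $U = \{i,u\}$, I recognize it as exactly the inclusion--exclusion numerator of $\P(A_{i}\cap A_{u}\mid\vx(t))$, so that $Q_{i} = \frac{D(t)}{(1-x_{i}(t)/N_{i})(1-x_{u}(t)/N_{u})}\,\P(A_{i}\cap A_{u}\mid\vx(t))$. Both urns are nonempty at stage $t$ (we draw from $u$, and the ratio for $i$ is assumed defined), so both parenthetical factors are strictly positive; hence $Q_{i}\ge 0$ and $f_{i}(t+1)\le f_{i}(t)$.

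Combining the pieces, for $i \ne u$ I obtain $\frac{\P(w(\vx(t+1),i)=1)}{\P(w(\vx(t),i)=1)} = \frac{f_{i}(t+1)}{f_{i}(t)}\cdot\frac{D(t)}{D(t+1)} \le \frac{D(t)}{D(t+1)}$, which is precisely the left-hand side computed for $u$; the case $i=u$ holds with equality. I expect the main obstacle to be the third step: the nonnegativity of $Q_{i}$ is not visible from its alternating-sign definition and becomes transparent only after matching it, via inclusion--exclusion, to the genuine joint connection probability $\P(A_{i}\cap A_{u}\mid\vx(t))$. A minor loose end is the degenerate case $f_{i}(t)=0$ (that is, $\P(w(\vx(t),i)=1)=0$), where the displayed ratio is a $0/0$ form; there $f_{i}(t+1)=0$ as well, so the inequality holds under the natural convention, or one simply restricts attention to urns with positive current red-draw probability.
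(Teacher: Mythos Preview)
Your proposal is correct and follows essentially the same approach as the paper: both arguments reduce the inequality to the single sign condition $Q_i \ge 0$, where $Q_i$ is your inclusion--exclusion sum (denoted $\gamma_{i,u(t)}$ in the paper). The only cosmetic difference is that the paper first rewrites the claim as $\frac{\P(w(\vx(t),i)=1)}{\P(w(\vx(t),u)=1)} \ge \frac{\P(w(\vx(t+1),i)=1)}{\P(w(\vx(t+1),u)=1)}$ and cancels the $u$-terms directly, whereas you first identify the left-hand side with $D(t)/D(t+1)$; and the paper cites Lemma~\ref{lem: problemma} for $Q_i\ge 0$ while you cite Theorem~\ref{thm: genprob}---either reference suffices.
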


\begin{proof}{Proof.}
See Section \ref{proof: probrate}.
\end{proof}

%Drawing a blue marble from urn $u(t)$ at stage $t$ not only increases the probability of drawing a red marble from $u(t)$ in the following stage, it increases the probability at a faster rate than any other probability increases as a result of querying $u(t)$ at stage $t$.  This characteristic provides much intuition about why optimal block policies always exist in multi-urn search problems and relates to the principle of optimality in dynamic programming.  

Theorem \ref{thm: probrate} provides much intuition about why optimal block policies always exist in multi-urn search problems. It also shows that a purely greedy strategy, in which the probability of immediately drawing a red marble is maximized at each stage, will always produce a (possibly suboptimal) block policy. 
Finally, it provides some insight into why the optimality conditions for the single marble multi-urn search problem given in equation \eqref{eq: single_marble_optimality} result in a greedy policy.  Equation \eqref{eq: single_marble_optimality} specifies that the first marble drawn is the one that maximizes the probability of immediately finding the target.  It follows from Theorem \ref{thm: probrate} that subsequent draws from the same urn will continue to maximize this probability.

%%%%%%%%%%%%%%%%%%%%%%%%%%%%%%%%%%%%%%%%%%%%%%%%%%%%%%%%%%%%%%%%%%%%%%%%%%%%
\subsection{Urn Correlation Dynamics}

In the preceding section we examined how urn probabilities and the probability of drawing a red marble from each urn evolved as a function of the state of the system.  In this section we show by example how, in general, the correlations among the urns can evolve in ways that we find to be counterintuitive. 

We say that two urns $i$ and $j$ are \emph{positively correlated} at stage $t$ if they have positive covariance, i.e.,
\[
\P\left(A_{i}\cap A_{j}\mid\vx(t)\right) > \P\left(A_{i}\mid\vx(t)\right)\P\left(A_{j}\mid\vx(t)\right).
\]
Likewise, urns $i$ and $j$ are \emph{negatively correlated} if their covariance is negative,
\[
\P\left(A_{i}\cap A_{j}\mid\vx(t)\right) < \P\left(A_{i}\mid\vx(t)\right)\P\left(A_{j}\mid\vx(t)\right).
\]
In Theorem \ref{thm: independence} we showed the somewhat intuitive result that independence among the urn probabilities is preserved through state transitions.  
In general, correlations can change through Bayesian updates each time a blue marble is drawn.  These changes can include changes in sign.  

We now provide an example scenario in which all correlations are positive in the initial conditions, but after a blue marble is drawn some correlations become negative. 
Suppose we have a multi-urn search problem consisting of three urns.  Each urn $i$ has $N_{i}=1$ marble and initial probability $\varphi_{i}=\frac{1}{2}$ of containing a red marble.  Furthermore,
\begin{align*}
\varphi_{\{1,2\}} = \varphi_{\{1,3\}} = \varphi_{\{2,3\}} = \varphi_{\{1,2,3\}} = \frac{1}{3}.
\end{align*}
A quick calculation confirms that this is a valid probability model, and that a red marble exists in at least one of the three urns with probability $\frac{5}{6}$.  We also verify that all correlations are positive,
\begin{align*}
\varphi_{\{1,2\}} &> \varphi_{1}\varphi_{2} \\
\varphi_{\{1,3\}} &> \varphi_{1}\varphi_{3} \\
\varphi_{\{2,3\}} &> \varphi_{2}\varphi_{3} .
\end{align*}
\begin{figure}[!hbt]
\centering
\includegraphics[width=0.4\textwidth]{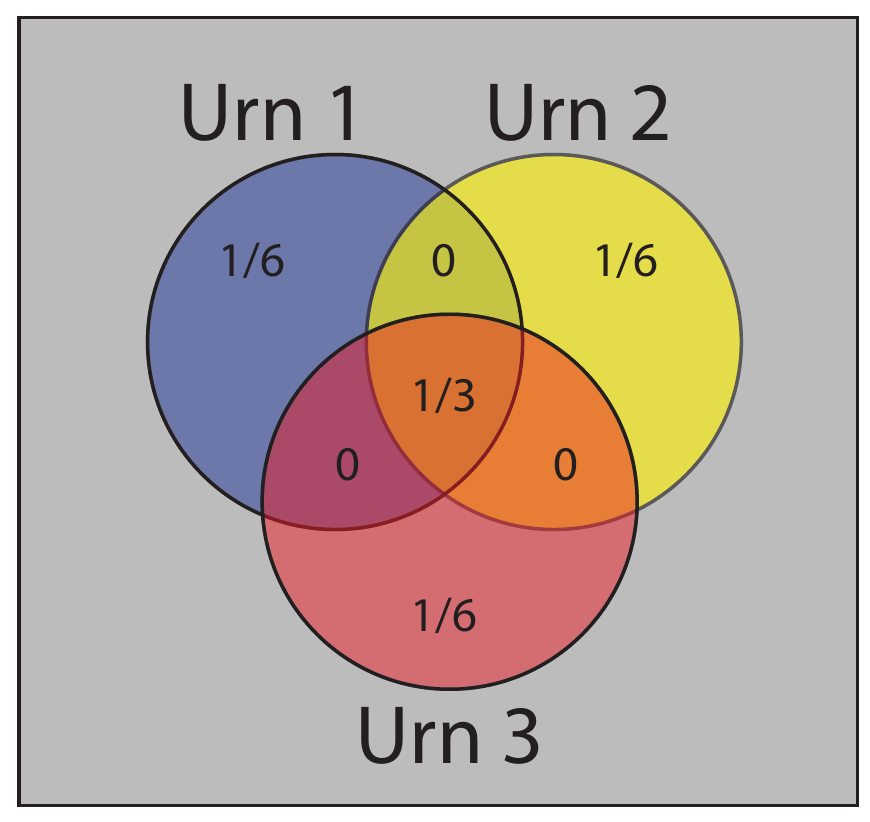}
\caption{Venn diagram of the probabilities in the three-urn example.}\label{fig: venn}
\end{figure}
Also, we have a more general positive correlation property,
\[
\varphi_{\{1,2,3\}} > \varphi_{1}\varphi_{2}\varphi_{3}.
\]
Figure \ref{fig: venn} depicts this probability law in a Venn diagram.  Note that there is no probability of exactly two urns containing red marbles.  A red marble is present in zero, one, or three urns almost surely.

Now suppose at stage $t=0$, a blue marble is drawn from urn $1$.  From Theorem \ref{thm: genprob}, the stage 1 urn probabilities are
\begin{align*}
\P\left(A_{1}\mid\vx(1)\right) & = 0 \\
\P\left(A_{2}\mid\vx(1)\right) & = \frac{1}{3}\\
\P\left(A_{3}\mid\vx(1)\right) & = \frac{1}{3}\\
\P\left(A_{1}\cap A_{2}\mid\vx(1)\right) & = 0\\
\P\left(A_{1}\cap A_{3} \mid\vx(1)\right) & = 0\\
\P\left(A_{2}\cap A_{3} \mid\vx(1)\right) & = 0\\
\P\left(A_{1} \cap A_{2} \cap A_{3} \mid\vx(1)\right) & = 0.
\end{align*}

By eliminating the possibility that each urn contained a red marble, the only outcomes that have positive probability in stage 1 are single-marble outcomes.  The correlation between urns 2 and 3, which was positive in the initial conditions, has become negative in stage 1:
\[
\P\left(A_{2}\cap A_{3}\mid\vx(1)\right) = 0 < \frac{1}{9} = \P\left(A_{2}\mid\vx(1)\right)\P\left(A_{3}\mid\vx(1)\right).
\]

One could similarly produce examples in which correlations that were originally negative become positive after drawing blue marbles.  In the single-marble and independent urn cases, for which we provided characterizations of the optimal policies in the preceding sections, correlations among the urns exhibited some stationarity with respect to stage.  In general, the nature of correlations among the urns can change substantially as blue marbles are drawn and probabilities are updated.  This characteristic presents a challenge to finding characterizations of the optimal policy in general.

%%%%%Proofs
\section{Proofs of Theorems} \label{proofs}

In this section we provide the technical proof for each theorem.

\subsection{Theorem \ref{thm: genprob}}
\label{proof: genprob}
\begin{proof}{Proof of Theorem \ref{thm: genprob}}
\begin{proof}{}
Substituting the initial condition, $\vx(0)=\mathbf{0}$, into the result returns the prior
\[
\P\left(\bigcap_{i\in U} A_{i}\;\middle|\;\vx(0)\right) = \varphi_{U} := \P\left(\bigcap_{i\in U} A_{i}\right).
\]
%which verifies this expression is true in the initial condition.  

The proof proceeds by induction.  First note that in order to reach state $\vx(t+1)$ from stage $t$, a blue marble must have been drawn from urn $u(t)$ from state $\vx(t)$.  We use the law of total probability to decompose this event and form a recursion:
\begin{align}
&\P\left(\bigcap_{i\in U} A_{i}\;\middle|\;\vx(t+1)\right)  = \frac{\P\left(\left(\bigcap_{i\in U} A_{i}\right) \cap w(\vx(t),u(t))=0|\vx(t)\right)}{\P(w(\vx(t),u(t))=0)} \nonumber \\
& = \frac{\left(1-\frac{1}{N_{u(t)}-x_{u(t)}(t)}\right)\P\left(\bigcap_{i\in U\cup u(t)} A_{i}\;\middle|\;\vx(t)\right) + \P(A_{u(t)}^{c}\cap\left(\bigcap_{i\in U} A_{i}\right)|\vx(t))}{1-\left(\frac{1}{N_{u(t)}-x_{u(t)}(t)}\right)\P(A_{u(t)}|\vx(t))} \nonumber \\
& = \frac{\P\left(\bigcap_{i\in U} A_{i}\;\middle|\;\vx(t)\right)
-\left(\frac{1}{N_{u(t)}-x_{u(t)}(t)}\right)\P\left(\bigcap_{i\in U\cup u(t)}A_{i}\;\middle|\;\vx(t)\right)}
{1-\left(\frac{1}{N_{u(t)}-x_{u(t)}(t)}\right)\P(A_{u(t)}|\vx(t))}
\label{eq: probrecursion}
\end{align}

The result in Theorem \ref{thm: genprob} forms our induction hypothesis.  We use this result to form the three probabilities in the recursion given in equation \eqref{eq: probrecursion}.
%
%
%%%%%%%%%%%%%%%%%%%
\begin{align*}
&\P\left(\bigcap_{i\in U} A_{i}\;\middle|\;\vx(t)\right) = 
\frac{
	\left[
		\prod_{i \in U}
		\left(
			1-\frac{
					x_{i}(t)
				}{
					N_{i}
				}
		\right)
	\right]
	\sum_{
		\{S \subseteq \mathcal{V}: \; S \supseteq U\}
	}
	(-1)^{|S|-|U|}\varphi_{S}
	\prod_{
		j \in S \setminus U
	}
	\frac{
		x_{j}(t)
	}{
		N_{j}
	}
}{
	\sum_{S \subseteq \mathcal{V}}
	(-1)^{|S|}
	\varphi_{S} 
	\prod_{j \in S} 
	\frac{
		x_{j}(t)
	}{
		N_{j}
	}
}
%\end{align*}
%\begin{align*}
  %%%%%%%
\\
& \P\left(A_{u(t)}|\vx(t)\right)   = 
\frac{
	\left(
			1-\frac{
					x_{u(t)}(t)
				}{
					N_{u(t)}
				}
	\right)
	\sum_{
		\{S \subseteq \mathcal{V}: \; u(t) \in S\}
	}
	(-1)^{|S|-1}\varphi_{S}
	\prod_{
		j \in S \setminus \{u(t)\}
	}
	\frac{
		x_{j}(t)
	}{
		N_{j}
	}
}{
	\sum_{S \subseteq \mathcal{V}}
	(-1)^{|S|}
	\varphi_{S} 
	\prod_{j \in S} 
	\frac{
		x_{j}(t)
	}{
		N_{j}
	}
}
\\ 
 &
	    \P\left(\bigcap_{i\in U\cup u(t)}A_{i}\;\middle|\;\vx(t)\right) \\
& \qquad = \begin{cases}
	    \frac{
	\left[
		\prod_{i \in U}
		\left(
			1-\frac{
					x_{i}(t)
				}{
					N_{i}
				}
		\right)
	\right]
	\sum_{
		\{S \subseteq \mathcal{V}: \; S \supseteq U\}
	}
	(-1)^{|S|-|U|}\varphi_{S}
	\prod_{
		j \in S \setminus U
	}
	\frac{
		x_{j}(t)
	}{
		N_{j}
	}
}{
	\sum_{S \subseteq \mathcal{V}}
	(-1)^{|S|}
	\varphi_{S} 
	\prod_{j \in S} 
	\frac{
		x_{j}(t)
	}{
		N_{j}
	}
}
& u(t) \in U \\
	    \frac{
	\left[
		\prod_{i \in (U \cup u(t))}
		\left(
			1-\frac{
					x_{i}(t)
				}{
					N_{i}
				}
		\right)
	\right]
	\sum_{
		\{S \subseteq \mathcal{V}: \; S \supseteq (U \cup u(t))\}
	}
	(-1)^{|S|-|U|-1}\varphi_{S}
	\prod_{
		j \in S \setminus (U \cup u(t))
	}
	\frac{
		x_{j}(t)
	}{
		N_{j}
	}
}{
	\sum_{S \subseteq \mathcal{V}}
	(-1)^{|S|}
	\varphi_{S} 
	\prod_{j \in S} 
	\frac{
		x_{j}(t)
	}{
		N_{j}
	}
}
& u(t) \notin U. 
\end{cases}%%%%%
%\end{align*}
%\begin{align*}
\end{align*}	 

As we see from these probabilities, we have two cases to consider:
	\begin{enumerate}
		\item $u(t) \in U$.
		\item $u(t) \notin U$.
	\end{enumerate}
We now look at each of these cases individually.

%%%%%%%%%%%%BEGIN CASE 1%%%%%%%%%%%%%
{\bf Case 1:} $u(t) \in U$.
We begin by substituting the probabilities formed using the induction hypothesis into the recursion in equation \eqref{eq: probrecursion}.
\begin{align*}
%&\P\left(\bigcap_{i\in U} A_{i}\;\middle|\;\vx(t+1)\right) = \\
& \P\left(\bigcap_{i\in U} A_{i}\;\middle|\;\vx(t+1)\right)
=
\frac{\P\left(\bigcap_{i\in U} A_{i}\;\middle|\;\vx(t)\right)
-\left(\frac{1}{N_{u(t)}-x_{u(t)}(t)}\right)\P\left(\bigcap_{i\in U\cup u(t)}A_{i}\;\middle|\;\vx(t)\right)}
{1-\left(\frac{1}{N_{u(t)}-x_{u(t)}(t)}\right)\P(A_{u(t)}|\vx(t))}
\\
& \quad = 
\frac{
	\left(1-\frac{1}{N_{u(t)}-x_{u(t)}(t)}\right)
	\frac{
	\left[
		\prod_{i \in U}
		\left(
			1-\frac{
					x_{i}(t)
				}{
					N_{i}
				}
		\right)
	\right]
	\sum_{
		\{S \subseteq \mathcal{V}: \; S \supseteq U\}
	}
	(-1)^{|S|-|U|}\varphi_{S}
	\prod_{
		j \in S \setminus U
	}
	\frac{
		x_{j}(t)
	}{
		N_{j}
	}
}{
	\sum_{S \subseteq \mathcal{V}}
	(-1)^{|S|}
	\varphi_{S} 
	\prod_{j \in S} 
	\frac{
		x_{j}(t)
	}{
		N_{j}
	}
}
}
{
	1-\left(
		\frac{1}{
			N_{u(t)}-x_{u(t)}(t)
		}
	\right)
	\left(
		\frac{
			\left(
				1-\frac{
					x_{u(t)}(t)
				}{
					N_{u(t)}
				}
			\right)
			\sum_{
				\{S \subseteq \mathcal{V}: \; u(t) \in S\}
			}
			(-1)^{|S|-1}\varphi_{S}
			\prod_{
				j \in S \setminus \{u(t)\}
			}
			\frac{
				x_{j}(t)
			}{
				N_{j}
			}
		}{
			\sum_{S \subseteq \mathcal{V}}
			(-1)^{|S|}
			\varphi_{S} 
			\prod_{j \in S} 
			\frac{
				x_{j}(t)
			}{
				N_{j}
			}
		}
	\right)
}
\\
&\quad = 
\frac{
	\left(\frac{N_{u(t)}-x_{u(t)}(t)-1}{N_{u(t)}-x_{u(t)}(t)}\right)
	\frac{
	\left[
		\left(
			\frac{N_{u(t)}-x_{u(t)}(t)}{N_{u(t)}}
		\right)
		\prod_{
			i \in U \setminus \{u(t)\}
		}
		\left(
			1-\frac{
					x_{i}(t)
				}{
					N_{i}
				}
		\right)
	\right]
	\sum_{
		\{S \subseteq \mathcal{V}: \; S \supseteq U\}
	}
	(-1)^{|S|-|U|}\varphi_{S}
	\prod_{
		j \in S \setminus U
	}
	\frac{
		x_{j}(t)
	}{
		N_{j}
	}
}{
	\sum_{S \subseteq \mathcal{V}}
	(-1)^{|S|}
	\varphi_{S} 
	\prod_{j \in S} 
	\frac{
		x_{j}(t)
	}{
		N_{j}
	}
}
}
{
	1-\left(
		\frac{1}{
			N_{u(t)}
		}
	\right)
	\left(
		\frac{
			\sum_{
				\{S \subseteq \mathcal{V}: \; u(t) \in S\}
			}
			(-1)^{|S|-1}\varphi_{S}
			\prod_{
				j \in S \setminus \{u(t)\}
			}
			\frac{
				x_{j}(t)
			}{
				N_{j}
			}
		}{
			\sum_{S \subseteq \mathcal{V}}
			(-1)^{|S|}
			\varphi_{S} 
			\prod_{j \in S} 
			\frac{
				x_{j}(t)
			}{
				N_{j}
			}
		}
	\right)
}
%\\
%%%
\end{align*}

We proceed by separating the summations into terms corresponding to sets containing $u(t)$ and those that do not contain $u(t)$.  We can then factor out the terms corresponding to urn $u(t)$ and make the following substitutions:
	\begin{equation*}
	    x_{i}(t) = \begin{cases}
	    x_{i}(t+1) & i \neq u(t) \\
	    x_{i}(t+1) - 1 & i = u(t).
	    \end{cases}
	\end{equation*}
Continuing the simplification from above,
\begin{align*}
& = 
\frac{
	\left(\frac{N_{u(t)}-x_{u(t)}(t+1)}{N_{u(t)}}\right)
	\left[
		\prod_{
			i \in U \setminus \{u(t)\}
		}
		\left(
			1-\frac{
					x_{i}(t+1)
				}{
					N_{i}
				}
		\right)
	\right]
	\sum_{
		\{S \subseteq \mathcal{V}: \; S \supseteq U\}
	}
	(-1)^{|S|-|U|}\varphi_{S}
	\prod_{
		j \in S \setminus U
	}
	\frac{
		x_{j}(t+1)
	}{
		N_{j}
	}
}
{
	\sum_{S \subseteq \mathcal{V}}
	(-1)^{|S|}
	\varphi_{S} 
	\prod_{j \in S} 
	\frac{
		x_{j}(t)
	}{
		N_{j}
	}
	-
	\left(
		\frac{1}{
			N_{u(t)}
		}
	\right)
	\left(
		\sum_{
			\{S \subseteq \mathcal{V}: \; u(t) \in S\}
		}
		(-1)^{|S|-1}\varphi_{S}
		\prod_{
			j \in S \setminus \{u(t)\}
		}
		\frac{
			x_{j}(t)
		}{
			N_{j}
		}
	\right)
}
\\
%%%%
& = 
\frac{
	\left[
		\prod_{
			i \in U
		}
		\left(
			1-\frac{
					x_{i}(t+1)
				}{
					N_{i}
				}
		\right)
	\right]
	\sum_{
		\{S \subseteq \mathcal{V}: \; S \supseteq U\}
	}
	(-1)^{|S|-|U|}\varphi_{S}
	\prod_{
		j \in S \setminus U
	}
	\frac{
		x_{j}(t+1)
	}{
		N_{j}
	}
}
{
	\sum_{S \subseteq \mathcal{V} \setminus u(t)}
	(-1)^{|S|}
	\varphi_{S} 
	\prod_{j \in S} 
	\frac{
		x_{j}(t)
	}{
		N_{j}
	}
	+
	\left(
		\frac{
			x_{u(t)}(t)
		}{
			N_{u(t)}
		}
		+ \frac{1}{
			N_{u(t)}
		}
	\right)
	\left(
		\sum_{
			\{S \subseteq \mathcal{V}: \; u(t) \in S\}
		}
		(-1)^{|S|}\varphi_{S}
		\prod_{
			j \in S \setminus \{u(t)\}
		}
		\frac{
			x_{j}(t)
		}{
			N_{j}
		}
	\right)
}
\\
%%%%%
& = 
\frac{
	\left[
		\prod_{
			i \in U
		}
		\left(
			1-\frac{
					x_{i}(t+1)
				}{
					N_{i}
				}
		\right)
	\right]
	\sum_{
		\{S \subseteq \mathcal{V}: \; S \supseteq U\}
	}
	(-1)^{|S|-|U|}\varphi_{S}
	\prod_{
		j \in S \setminus U
	}
	\frac{
		x_{j}(t+1)
	}{
		N_{j}
	}
}
{
	\sum_{S \subseteq \mathcal{V} \setminus u(t)}
	(-1)^{|S|}
	\varphi_{S} 
	\prod_{j \in S} 
	\frac{
		x_{j}(t+1)
	}{
		N_{j}
	}
	+
	%\left(
		\sum_{
			\{S \subseteq \mathcal{V}: \; u(t) \in S\}
		}
		(-1)^{|S|}\varphi_{S}
		\prod_{
			j \in S 
		}
		\frac{
			x_{j}(t+1)
		}{
			N_{j}
		}
	%\right)
}
\\
%%%%%%
& = 
\frac{
	\left[
		\prod_{
			i \in U
		}
		\left(
			1-\frac{
					x_{i}(t+1)
				}{
					N_{i}
				}
		\right)
	\right]
	\sum_{
		\{S \subseteq \mathcal{V}: \; S \supseteq U\}
	}
	(-1)^{|S|-|U|}\varphi_{S}
	\prod_{
		j \in S \setminus U
	}
	\frac{
		x_{j}(t+1)
	}{
		N_{j}
	}
}
{
	\sum_{S \subseteq \mathcal{V}}
	(-1)^{|S|}
	\varphi_{S} 
	\prod_{j \in S} 
	\frac{
		x_{j}(t+1)
	}{
		N_{j}
	}
}.
\end{align*}
Observe that this is the desired result for stage $t+1$.  We now provide the induction step for the case in which $u(t) \notin U$.
%%%%%%%%%%%%%%%%END OF CASE 1%%%%%%%%%

%%%%%%%%%%%%%%%%BEGIN CASE 2%%%%%%%%%%
{\bf Case 2:} $u(t) \notin U$ follows a similar set of steps.  We begin by substituting the probabilities formed using the induction hypothesis into the recursion in equation \eqref{eq: probrecursion}.
\begin{align*}
%&\P\left(\bigcap_{i\in U} A_{i}\;\middle|\;\vx(t+1)\right) = \\
& \frac{\P\left(\bigcap_{i\in U} A_{i}\;\middle|\;\vx(t)\right)
-\left(\frac{1}{N_{u(t)}-x_{u(t)}(t)}\right)\P\left(\bigcap_{i\in U\cup u(t)}A_{i}\;\middle|\;\vx(t)\right)}
{1-\left(\frac{1}{N_{u(t)}-x_{u(t)}(t)}\right)\P(A_{u(t)}|\vx(t))}
\\
& \ = 
\Bigg[
\frac{
	\left(
		\prod_{i \in U}
		\left(
			1-\frac{
				x_{i}(t)
			}{
				N_{i}
			}
		\right)
	\right)
	\sum_{
		\{S \subseteq \mathcal{V}: \; S \supseteq U\}
	}
	(-1)^{|S|-|U|}\varphi_{S}
	\prod_{
		j \in S \setminus U
	}
	\frac{
		x_{j}(t)
	}{
		N_{j}
	}
}{
		\sum_{S \subseteq \mathcal{V}}
		(-1)^{|S|}
		\varphi_{S} 
		\prod_{j \in S} 
		\frac{
			x_{j}(t)
		}{
			N_{j}
		}
}
\\
& \quad 
-
\frac{
    \left(
    	\frac{1}{N_{u(t)}-x_{u(t)}(t)}
    \right)
    	\left(
    		\prod_{i \in U \cup u(t)}
    		\left(
    			1-\frac{
    				x_{i}(t)
    			}{
    				N_{i}
    			}
    		\right)
    	\right)
    	\sum_{
    		\{S \subseteq \mathcal{V}: \; S \supseteq U \cup u(t)\}
    	}
    	(-1)^{|S|-|U|-1}\varphi_{S}
    	\prod_{
    		j \in S \setminus (U \cup u(t))
    	}
    	\frac{
    		x_{j}(t)
    	}{
    		N_{j}
    	}
}
{
		\sum_{S \subseteq \mathcal{V}}
		(-1)^{|S|}
		\varphi_{S} 
		\prod_{j \in S} 
		\frac{
			x_{j}(t)
		}{
			N_{j}
		}
}
\Bigg]
\\
& \qquad \times
\Bigg[
%{
	1-\left(
		\frac{1}{
			N_{u(t)}-x_{u(t)}(t)
		}
	\right)
	\left(
		\frac{
			\left(
				1-\frac{
					x_{u(t)}(t)
				}{
					N_{u(t)}
				}
			\right)
			\sum_{
				\{S \subseteq \mathcal{V}: \; u(t) \in S\}
			}
			(-1)^{|S|-1}\varphi_{S}
			\prod_{
				j \in S \setminus \{u(t)\}
			}
			\frac{
				x_{j}(t)
			}{
				N_{j}
			}
		}{
			\sum_{S \subseteq \mathcal{V}}
			(-1)^{|S|}
			\varphi_{S} 
			\prod_{j \in S} 
			\frac{
				x_{j}(t)
			}{
				N_{j}
			}
		}
	\right)
%}
\Bigg]^{-1}
\end{align*}

The denominators in the above expression reduce in exactly the same way as in the previous case in which $u(t) \in U$.  In fact, the denominators in the induction hypothesis and in equation \eqref{eq: probrecursion} do not depend on whether $u(t) \in U$.  Because we have already shown the steps for reducing this denominator to the desired form, we omit these steps and only show the induction on the numerators for this case:

\begin{align*}
& 
\Bigg[
	\left(
		\prod_{i \in U}
		\left(
			1-\frac{
				x_{i}(t)
			}{
				N_{i}
			}
		\right)
	\right)
	\sum_{
		\{S \subseteq \mathcal{V}: \; S \supseteq U\}
	}
	(-1)^{|S|-|U|}\varphi_{S}
	\prod_{
		j \in S \setminus U
	}
	\frac{
		x_{j}(t)
	}{
		N_{j}
	}
\\
& \qquad -\left(\frac{1}{N_{u(t)}-x_{u(t)}(t)}\right)
	\left(
		\prod_{i \in U \cup u(t)}
		\left(
			1-\frac{
				x_{i}(t)
			}{
				N_{i}
			}
		\right)
	\right)
	\sum_{
		\{S \subseteq \mathcal{V}: \; S \supseteq U \cup u(t)\}
	}
	(-1)^{|S|-|U|-1}\varphi_{S}
	\prod_{
		j \in S \setminus (U \cup u(t))
	}
	\frac{
		x_{j}(t)
	}{
		N_{j}
	}
\Bigg]
\\
&= 
\left(
	\prod_{i \in U}
	\left(
		1-\frac{
			x_{i}(t)
		}{
			N_{i}
		}
	\right)
\right)
\Bigg[
	\sum_{
		\{S \subseteq \mathcal{V}: \; S \supseteq U\}
	}
	(-1)^{|S|-|U|}\varphi_{S}
	\prod_{
		j \in S \setminus U
	}
	\frac{
		x_{j}(t)
	}{
		N_{j}
	}
\\
& \qquad +\left(\frac{1}{N_{u(t)}-x_{u(t)}(t)}\right)
	\left(
	    \frac{N_{u(t)}-x_{u(t)}(t)}{N_{u(t)}}
	\right)
	\sum_{
		\{S \subseteq \mathcal{V}: \; S \supseteq U \cup u(t)\}
	}
	(-1)^{|S|-|U|}\varphi_{S}
	\prod_{
		j \in S \setminus (U \cup u(t))
	}
	\frac{
		x_{j}(t)
	}{
		N_{j}
	}
\Bigg]
\\
%%%
&= 
\left(
	\prod_{i \in U}
	\left(
		1-\frac{
			x_{i}(t)
		}{
			N_{i}
		}
	\right)
\right)
\Bigg[
	\sum_{
		\{S \subseteq \mathcal{V} \setminus \{u(t)\}: \; S \supseteq U\}
	}
	(-1)^{|S|-|U|}\varphi_{S}
	\prod_{
		j \in S \setminus U
	}
	\frac{
		x_{j}(t)
	}{
		N_{j}
	}
\\
& \qquad 
	+\left(
		\frac{
			x_{u(t)}(t)
		}{
			N_{u(t)}
		}
		+
		\frac{1}{N_{u(t)}}
	\right)
	\sum_{
		\{S \subseteq \mathcal{V}: \; S \supseteq U \cup u(t)\}
	}
	(-1)^{|S|-|U|}\varphi_{S}
	\prod_{
		j \in S \setminus (U \cup u(t))
	}
	\frac{
		x_{j}(t)
	}{
		N_{j}
	}
\Bigg]
%\\
\end{align*}
We again make the substitution:
	\begin{equation*}
	    x_{i}(t) = \begin{cases}
	    x_{i}(t+1) & i \neq u(t) \\
	    x_{i}(t+1) - 1 & i = u(t),
	    \end{cases}
	\end{equation*}
and continue from above:
\begin{align*}
%%%%
&= 
\left(
	\prod_{i \in U}
	\left(
		1-\frac{
			x_{i}(t+1)
		}{
			N_{i}
		}
	\right)
\right)
\Bigg[
	\sum_{
		\{S \subseteq \mathcal{V} \setminus \{u(t)\}: \; S \supseteq U\}
	}
	(-1)^{|S|-|U|}\varphi_{S}
	\prod_{
		j \in S \setminus U
	}
	\frac{
		x_{j}(t+1)
	}{
		N_{j}
	}
&& \qquad \qquad \qquad
\\
& \qquad 
	+\left(
		\frac{
			x_{u(t)}(t+1)
		}{
			N_{u(t)}
		}
	\right)
	\sum_{
		\{S \subseteq \mathcal{V}: \; S \supseteq U \cup u(t)\}
	}
	(-1)^{|S|-|U|}\varphi_{S}
	\prod_{
		j \in S \setminus (U \cup u(t))
	}
	\frac{
		x_{j}(t+1)
	}{
		N_{j}
	}
\Bigg]
&&  \qquad\qquad \qquad
\\
%%%%%%
& = 
\left(
	\prod_{
		i \in U
	}
	\left(
		1-\frac{
				x_{i}(t+1)
			}{
				N_{i}
			}
	\right)
\right)
\sum_{
	\{S \subseteq \mathcal{V}: \; S \supseteq U\}
}
(-1)^{|S|-|U|}\varphi_{S}
\prod_{
	j \in S \setminus U
}
\frac{
	x_{j}(t+1)
}{
	N_{j}
}
.
&&  \qquad\qquad \qquad
\end{align*}
This final expression is the numerator in Theorem \ref{thm: genprob} for the stage $t+1$ urn probabilities.  \halmos
%%%%%%%%%%%%%END CASE 2%%%%%%%%%%%%%%%
\end{proof}

%\halmos
\end{proof}

\subsection{Theorem \ref{thm: blockpolicy}}
\label{proof: blockpolicy}

Before we provide a proof for Theorem \ref{thm: blockpolicy}, we state and prove the following Lemma.

\begin{mylem} \label{lem: problemma}
Given a multi-urn search problem on a set of urns $\mathcal{V}$, suppose a policy $\vu$ is executed to stage $t$ irrespective of whether a red marble is found at any stage.  Let $B_{i}$ be the event that a red marble has been drawn from urn $i \in \mathcal{V}$ in this experiment.  Then, for any subset $U \subseteq \mathcal{V}$, the probability of having drawn a red marble from each of the urns in $U$ and none of the other urns is 
\[
\P\left(\left(\bigcap_{i \in U}B_{i}\right)\cap\left(\bigcap_{j \in \mathcal{V}\setminus U}B_{j}^{c}\right)\right) = \sum_{S: S \subseteq \mathcal{V}, S \supseteq U} (-1)^{|S|-|U|}\varphi_{S}\prod_{i \in S}\frac{x_{i}(t)}{N_{i}} \geq 0.
\]
\end{mylem}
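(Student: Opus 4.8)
The plan is to reduce the ``exactly the urns in $U$'' event to the family of ``at least the urns in $S$'' events $\bigcap_{i\in S}B_{i}$ for $S\supseteq U$, for which a clean product formula is available, and then invoke the standard inclusion--exclusion identity. I would carry this out in two steps.

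First I would establish that for every $S\subseteq\mathcal{V}$,
\[
\P\left(\bigcap_{i\in S}B_{i}\right)=\varphi_{S}\prod_{i\in S}\frac{x_{i}(t)}{N_{i}}.
\]
Since a red marble can be drawn from urn $i$ only if urn $i$ actually contains one, $B_{i}\subseteq A_{i}$, so $\bigcap_{i\in S}B_{i}\subseteq\bigcap_{i\in S}A_{i}$ and I may condition on $\bigcap_{i\in S}A_{i}$. The key probabilistic input is that the (arbitrarily correlated) placement of red marbles is independent of the draw orderings, and that the draw orderings in distinct urns are mutually independent. Hence, conditioned on all urns in $S$ containing a red marble, the events that each such red marble has been drawn by stage $t$ are independent, each occurring with probability $x_{i}(t)/N_{i}$: because the marbles of urn $i$ are examined in uniformly random order, a red marble in urn $i$ is equally likely to occupy any of the $N_{i}$ positions, and it has been drawn iff it lies among the first $x_{i}(t)$ examined. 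By the path-independence noted after Theorem \ref{thm: genprob}, only the count $x_{i}(t)$ matters here, not the interleaving of the draws. Multiplying the conditional probability $\prod_{i\in S}x_{i}(t)/N_{i}$ by $\P(\bigcap_{i\in S}A_{i})=\varphi_{S}$ gives the displayed formula.

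Second I would apply the elementary ``exactly $U$'' inclusion--exclusion identity
\[
\P\left(\bigcap_{i\in U}B_{i}\cap\bigcap_{j\in\mathcal{V}\setminus U}B_{j}^{c}\right)=\sum_{S:\,U\subseteq S\subseteq\mathcal{V}}(-1)^{|S|-|U|}\,\P\left(\bigcap_{i\in S}B_{i}\right),
\]
which follows by expanding the indicator $\prod_{i\in U}\mathbf{1}_{B_{i}}\prod_{j\notin U}(1-\mathbf{1}_{B_{j}})$ and taking expectations, writing $S=U\cup R$ for $R\subseteq\mathcal{V}\setminus U$ so that $|R|=|S|-|U|$. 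Substituting the product formula from the first step yields exactly
\[
\sum_{S:\,S\subseteq\mathcal{V},\,S\supseteq U}(-1)^{|S|-|U|}\varphi_{S}\prod_{i\in S}\frac{x_{i}(t)}{N_{i}},
\]
and the inequality $\geq0$ is immediate, since the left-hand side is a probability.

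The main obstacle is the factorization in the first step: I must argue carefully that all the correlation structure lives in the placement events $A_{i}$ (governed by the $\varphi_{S}$), while the sampling mechanism contributes only the independent factors $x_{i}(t)/N_{i}$. This is precisely what lets the product $\varphi_{S}\prod_{i\in S}x_{i}(t)/N_{i}$ emerge despite arbitrary dependencies among the urns; formalizing the independence of the within-urn draw orderings from each other and from the red-marble configuration (for instance, by attaching to each urn an independent uniform position variable for its potential red marble) is the one point requiring care, after which both steps are routine.
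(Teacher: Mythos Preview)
Your proposal is correct and follows essentially the same approach as the paper: establish the product formula $\P\bigl(\bigcap_{i\in S}B_i\bigr)=\varphi_S\prod_{i\in S}x_i(t)/N_i$ via conditional independence of the draw orderings given the placement events, and then pass from ``at least $S$'' to ``exactly $U$'' by inclusion--exclusion. The only cosmetic difference is that the paper first writes $\P\bigl(\bigcap_{i\in U}B_i\cap\bigcap_{j\notin U}B_j^c\bigr)=\P\bigl(\bigcap_{i\in U}B_i\bigr)-\P\bigl(\bigcap_{i\in U}B_i\cap\bigcup_{j\notin U}B_j\bigr)$ and applies inclusion--exclusion to the union in the second term, whereas you go directly to the M\"obius-type identity by expanding the indicator $\prod_{i\in U}\mathbf{1}_{B_i}\prod_{j\notin U}(1-\mathbf{1}_{B_j})$; these yield the same alternating sum.
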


\begin{proof}{Proof of Lemma \ref{lem: problemma}.}
%%Proof of problemma
\begin{proof}{}
This result is comes from the principle of inclusion-exclusion, and follows from Equation \ref{eq: inclusion-exclusion}.  %We begin by applying 
From basic set operations and DeMorgan's Law, we can write
\[
\left(\bigcap_{i \in U}B_{i}\right) = \left(\left(\bigcap_{i \in U}B_{i}\right)\cap\left(\bigcap_{j \in \mathcal{V}\setminus U}B_{j}^{c}\right)\right) \cup \left(\left(\bigcap_{i \in U}B_{i}\right) \cap \left(\bigcup_{j \in \mathcal{V}\setminus U}B_{j}\right)\right),
\]
which is a union of disjoint sets.  Therefore,
\begin{equation}
\P\left(\left(\bigcap_{i \in U}B_{i}\right) \cap \left(\bigcap_{j \in \mathcal{V}\setminus U}B_{j}^{c}\right)\right) = 
\P\left(\bigcap_{i \in U}B_{i}\right)
-\P\left(\left(\bigcap_{i \in U}B_{i}\right) \cap \left(\bigcup_{j \in \mathcal{V}\setminus U}B_{j}\right)\right) \label{eq: prob_axiom3}.
\end{equation}

The probability a red marble is drawn from all of the urns in $U$ in this experiment can be found using the multiplication rule:
\begin{equation}
\P\left(\bigcap_{i \in U}B_{i}\right) = \varphi_{U}\prod_{i \in U}\frac{x_{i}(t)}{N_{i}} \label{eq: eventprob}.
\end{equation}
Recall that $\varphi_{U}$ is the probability of all of the urns in set $U$ containing a red marble, and $\frac{x_{i}(t)}{N_{i}}$ is simply the fraction of marbles removed from urn $i$ at stage $t$.  The product in this expression implies conditional independence: given all of the urns in $U$ contain a red marble, the probability that a red marble is drawn from each of them by stage $t$ is the product of the individual probabilities.  This conditional independence follows implicitly from our search assumptions.  The order of marble draws from each urn is random, and does not depend on the order of marble draws from any other urn.

We now examine $\P\left(\left(\bigcap_{i \in U}B_{i}\right) \cap \left(\bigcup_{j \in \mathcal{V}\setminus U}B_{j}\right)\right)$.  We first note that
\[
\left(\left(\bigcap_{i \in U}B_{i}\right) \cap \left(\bigcup_{j \in \mathcal{V}\setminus U}B_{j}\right)\right)
= \bigcup_{j \in \mathcal{V}\setminus U} \left(\left(\bigcap_{i \in U}B_{i}\right) \cap B_{j}\right).
\]

Using the principle of inclusion exclusion, we can find the probability of this union,
\begin{align}
& \P\left(
	\bigcup_{j \in \mathcal{V}\setminus U} \left(\left(\bigcap_{i \in U}B_{i}\right) \cap B_{j}\right)
\right) 
= \sum_{j \in \mathcal{V} \setminus U} 
\P\left(
	\left(\bigcap_{i \in U}B_{i}\right) \cap B_{j} 
\right) 
\nonumber
\\ 
& \qquad
-
\sum_{j \in \mathcal{V} \setminus U} 
\left[
    \sum_{k \in \mathcal{V} \setminus (U \cup \{j\})}
    \P\left(
    	\left(\bigcap_{i \in U}B_{i}\right) \cap B_{j} \cap B_{k}
    \right) 
\right]
\nonumber
\\
 & \qquad
 \cdots + (-1)^{|\mathcal{V}|-|U|+1}
\P\left(
	\bigcap_{i \in \mathcal{V}}B_{i}
\right).
\label{eq: unionprob}
\end{align}
Substituting expressions \ref{eq: eventprob} and \ref{eq: unionprob} into equation \ref{eq: prob_axiom3} reduces to the desired result.  The principle of inclusion-exclusion and the axioms of probability ensure that this quantity is nonnegative.
\halmos
\end{proof}

%\halmos
\end{proof}

We now provide the proof of Theorem \ref{thm: blockpolicy}.
\begin{proof}{Proof of Theorem \ref{thm: blockpolicy}.}
%Blockpolicy proof

\begin{proof}{}
Suppose we are given a multi-urn search problem on a set of urns $\mathcal{V}$, with each urn $i \in \mathcal{V}$ containing $N_{i}$ marbles and initial target probabilities $\varphi_{U}$ for all $U \subseteq \mathcal{V}$.  Suppose also that valid policy $\vu=(u(0),\ldots,u(N-1))$ is optimal, where $\vu$ is not a block policy.  This implies that we can find a stage $\tau$ where
\begin{align*}
u(\tau) & = i \\
u(\tau+1),u(\tau+2),\ldots,u(\tau+\delta-1) & \neq i \\
u(\tau+\delta)&=i,
\end{align*}
for some urn $i \in \mathcal{V}$, where $\delta>1$.

\begin{figure}[!hbt]
\centering
\includegraphics[width=0.4\textwidth]{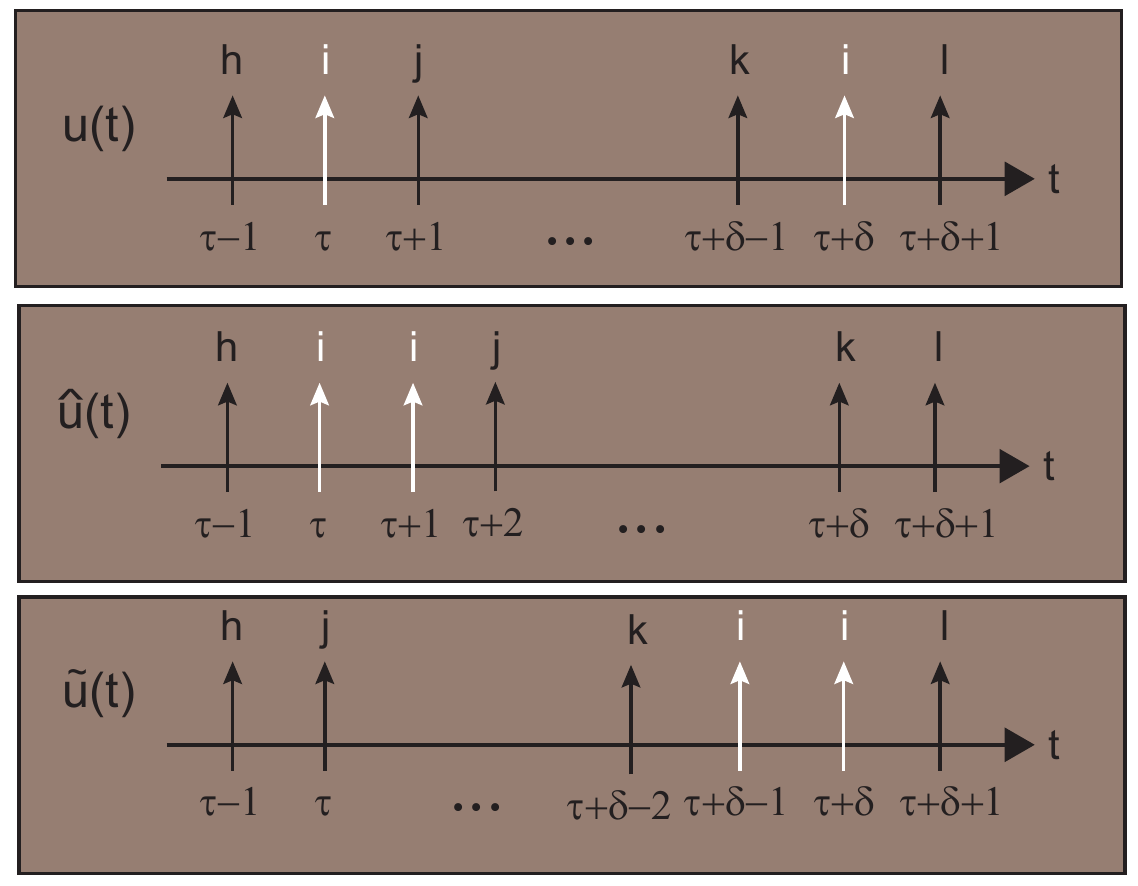}
\caption{Comparison of policies $\vu$, $\hat{\vu}$, and $\tilde{\vu}$.} 
\label{fig: policy_comparison}
\end{figure}

We now consider two alternative policies that move the queries of urn $i$ into ``blocks''.  Policy $\hat{\vu}$ executes the two queries of $i$ in stages $\tau$ and $\tau+1$, then executes the rest of the queries in the subsequence.  Policy $\tilde{\vu}$ executes the two queries of $i$ after executing the other queries in the subsequence.  A visual comparison of these policies is provided in Figure \ref{fig: policy_comparison}.  Formally,
\begin{align*}
\hat{\vu}&=\hat{u}(t)=\begin{cases}
u(t), & t \leq \tau \ \mathrm{or} \ t > \tau+\delta \\
u(t-1), & t=\tau+1,\tau+2,\ldots,\tau+\delta, %\\
\end{cases}\\
\tilde{\vu} &= \tilde{u}(t) = \begin{cases}
u(t), & t < \tau \ \mathrm{or} \ t \geq \tau+\delta \\
u(t+1),& t=\tau,\tau+1,\ldots,\tau+\delta-1. %\\
\end{cases}
\end{align*}
Let $C$ be the number of non-target queries (i.e., the cost) when using policy $\vu$, $\hat{C}$ be the same quantity when using policy $\hat{\vu}$, and $\tilde{C}$ be the same quantity when using policy $\tilde{\vu}$.
Conditioned on not having found a red marble, it follows that the state trajectories,
\begin{align*}
\hat{x}_{j}(t) &= \begin{cases}
x_{j}(t), & t \leq \tau+1 \ \mathrm{or} \ t > \tau+\delta \\
x_{j}(t-1), & t=\tau+2,\ldots,\tau+\delta , \ j \neq i\\
x_{j}(t-1)+1, & t=\tau+2,\ldots,\tau+\delta , \ j = i\\
\end{cases}\\
\tilde{x}_{j}(t) &= \begin{cases}
x_{j}(t), & t <= \tau \ \mathrm{or} \ t \geq \tau+\delta \\
x_{j}(t+1), & t=\tau+1,\ldots,\tau+\delta-1 , \ j \neq i\\
x_{j}(t+1) - 1, & t=\tau+1,\ldots,\tau+\delta-1 , \ j = i,\\
\end{cases}
\end{align*}
where $\hat{x}_{j}(t)$ and $\tilde{x}_{j}(t)$ are the numbers of times urn $j$ has been queried before stage $t$ under policies $\hat{\vu}$ and $\tilde{\vu}$, respectively.

Using equation \eqref{eq: fullcost}, our assumptions imply that
{\footnotesize
\begin{align*}
& \E[\hat{C}]-\E[C] \geq 0 \\
&  
\sum_{t=0}^{N-1}
\sum_{S \subseteq \mathcal{V}}
	(-1)^{|S|}
	\varphi_{S} 
	\prod_{j \in S} 
	\frac{
		\hat{x}_{j}(t+1)
	}{
		N_{j}
	}
-
\sum_{t=0}^{N-1}
\sum_{S \subseteq \mathcal{V}}
	(-1)^{|S|}
	\varphi_{S} 
	\prod_{j \in S} 
	\frac{
		x_{j}(t+1)
	}{
		N_{j}
	}
	\geq 0
\\
& 
\sum_{S \subseteq \mathcal{V} \setminus \{i\}}
\left(
\sum_{t=\tau+2}^{\tau+\delta}
	(-1)^{|S|}
	\varphi_{S} 
	\prod_{j \in S} 
	\frac{
		\hat{x}_{j}(t)
	}{
		N_{j}
	}
-
\sum_{t=\tau+2}^{\tau+\delta}
	(-1)^{|S|}
	\varphi_{S} 
	\prod_{j \in S} 
	\frac{
		x_{j}(t)
	}{
		N_{j}
	}
\right) \\
%%%%
& \quad 
+
\sum_{S \subseteq \mathcal{V}: i \in S}
\left(
\sum_{t=\tau+2}^{\tau+\delta}
	(-1)^{|S|}
	\varphi_{S} 
	\prod_{j \in S} 
	\frac{
		\hat{x}_{j}(t)
	}{
		N_{j}
	}
-
\sum_{t=\tau+2}^{\tau+\delta}
	(-1)^{|S|}
	\varphi_{S} 
	\prod_{j \in S} 
	\frac{
		x_{j}(t)
	}{
		N_{j}
	}
\right)
\geq 0
\\
%%%%%
& 
\sum_{S \subseteq \mathcal{V} \setminus \{i\}}
\left(
\sum_{t=\tau+1}^{\tau+\delta-1}
	(-1)^{|S|}
	\varphi_{S} 
	\prod_{j \in S} 
	\frac{
		x_{j}(t)
	}{
		N_{j}
	}
-
\sum_{t=\tau+2}^{\tau+\delta}
	(-1)^{|S|}
	\varphi_{S} 
	\prod_{j \in S} 
	\frac{
		x_{j}(t)
	}{
		N_{j}
	}
\right) \\
%%%%%%
& \quad 
+
\sum_{S \subseteq \mathcal{V}: i \in S}
\left(
\sum_{t=\tau+1}^{\tau+\delta-1}
	(-1)^{|S|}
	\varphi_{S} 
	\left(
		\frac{x_{i}(t)+1}{N_{i}}
	\right)
	\prod_{j \in S \setminus \{i\}} 
	\frac{
		x_{j}(t)
	}{
		N_{j}
	}
-
\sum_{t=\tau+2}^{\tau+\delta}
	(-1)^{|S|}
	\varphi_{S} 
	\left(
		\frac{x_{i}(t)}{N_{i}}
	\right)
	\prod_{j \in S \setminus \{i\}} 
	\frac{
		x_{j}(t)
	}{
		N_{j}
	}
\right)
\geq 0
\\
& 
\sum_{S \subseteq \mathcal{V} \setminus \{i\}}
\left(
	(-1)^{|S|}
	\varphi_{S} 
	\prod_{j \in S} 
	\frac{
		x_{j}(\tau+1)
	}{
		N_{j}
	}
-
	(-1)^{|S|}
	\varphi_{S} 
	\prod_{j \in S} 
	\frac{
		x_{j}(\tau+\delta)
	}{
		N_{j}
	}
\right) \\
& \quad 
+
\sum_{S \subseteq \mathcal{V}: i \in S}
%\Bigg[
    \left(\frac{x_{i}(\tau+\delta)}{N_{i}}\right)
    \left( 
    	(-1)^{|S|}
    	\varphi_{S} 
    	\prod_{j \in S \setminus \{i\}} 
    	\frac{
    		x_{j}(\tau+1)
    	}{
    		N_{j}
    	}
    -
    	(-1)^{|S|}
    	\varphi_{S} 
    	\prod_{j \in S \setminus \{i\}} 
    	\frac{
    		x_{j}(\tau+\delta)
    	}{
    		N_{j}
    	}
    \right) \\
    & \qquad
    +
    %\left(
    	\frac{1}{N_{i}}
    %\right)
    \sum_{S \subseteq \mathcal{V}: i \in S}%%%
    \
    \sum_{t=\tau+1}^{\tau+\delta-1}
    	(-1)^{|S|}
    	\varphi_{S} 
    	\prod_{j \in S \setminus \{i\}} 
    	\frac{
    		x_{j}(t)
    	}{
    		N_{j}
    	}
%\Bigg]
\geq 0
\\
& 
\sum_{S \subseteq \mathcal{V} \setminus \{i\}}
\left(
	(-1)^{|S|}
	\varphi_{S} 
	\prod_{j \in S} 
	\frac{
		x_{j}(\tau+\delta)
	}{
		N_{j}
	}
-
	(-1)^{|S|}
	\varphi_{S} 
	\prod_{j \in S} 
	\frac{
		x_{j}(\tau+1)
	}{
		N_{j}
	}
\right) \\
& \quad 
+
%\Bigg[
\sum_{S \subseteq \mathcal{V}: i \in S}
    \left(\frac{x_{i}(\tau+\delta)}{N_{i}}\right)
    \left( 
    	(-1)^{|S|}
    	\varphi_{S} 
    	\prod_{j \in S \setminus \{i\}} 
    	\frac{
    		x_{j}(\tau+\delta)
    	}{
    		N_{j}
    	}
    -
    	(-1)^{|S|}
    	\varphi_{S} 
    	\prod_{j \in S \setminus \{i\}} 
    	\frac{
    		x_{j}(\tau+1)
    	}{
    		N_{j}
    	}
    \right) \\
    & \qquad
    -
    %\left(
    \frac{1}{N_{i}}
    %\right)
    \sum_{S \subseteq \mathcal{V}: i \in S}
    \
    \sum_{t=\tau+2}^{\tau+\delta-1}
    	(-1)^{|S|}
    	\varphi_{S} 
    	\prod_{j \in S \setminus \{i\}} 
    	\frac{
    		x_{j}(t)
    	}{
    		N_{j}
    	}
%\Bigg]
\leq  
%\left(
	\frac{1}{N_{i}}
%\right)
\sum_{S \subseteq \mathcal{V}: i \in S}    
    	(-1)^{|S|}
    	\varphi_{S} 
    	\prod_{j \in S \setminus \{i\}} 
    	\frac{
    		x_{j}(\tau+1)
    	}{
    		N_{j}
    	}.
\end{align*}
}

Optimality of $\vu$ likewise implies
{\footnotesize
\begin{align*}
& \E[\tilde{C}]-\E[C] \geq 0 \\
& 
\sum_{t=0}^{N-1}
\sum_{S \subseteq \mathcal{V}}
	(-1)^{|S|}
	\varphi_{S} 
	\prod_{j \in S} 
	\frac{
		\tilde{x}_{j}(t+1)
	}{
		N_{j}
	}
-
\sum_{t=0}^{N-1}
\sum_{S \subseteq \mathcal{V}}
	(-1)^{|S|}
	\varphi_{S} 
	\prod_{j \in S} 
	\frac{
		x_{j}(t+1)
	}{
		N_{j}
	}
	\geq 0
\\
& 
\sum_{S \subseteq \mathcal{V}}
\left(
\sum_{t=\tau+1}^{\tau+\delta-1}
	(-1)^{|S|}
	\varphi_{S} 
	\prod_{j \in S} 
	\frac{
		\tilde{x}_{j}(t)
	}{
		N_{j}
	}
-
\sum_{t=\tau+1}^{\tau+\delta-1}
	(-1)^{|S|}
	\varphi_{S} 
	\prod_{j \in S} 
	\frac{
		x_{j}(t)
	}{
		N_{j}
	}
\right)
\geq 0
\\
%%%
& 
\sum_{S \subseteq \mathcal{V} \setminus \{i\}}
\left(
\sum_{t=\tau+1}^{\tau+\delta-1}
	(-1)^{|S|}
	\varphi_{S} 
	\prod_{j \in S} 
	\frac{
		\tilde{x}_{j}(t)
	}{
		N_{j}
	}
-
\sum_{t=\tau+1}^{\tau+\delta-1}
	(-1)^{|S|}
	\varphi_{S} 
	\prod_{j \in S} 
	\frac{
		x_{j}(t)
	}{
		N_{j}
	}
\right) \\
& \quad 
+
\sum_{S \subseteq \mathcal{V}: i \in S}
\left(
\sum_{t=\tau+1}^{\tau+\delta-1}
	(-1)^{|S|}
	\varphi_{S} 
	\prod_{j \in S} 
	\frac{
		\tilde{x}_{j}(t)
	}{
		N_{j}
	}
-
\sum_{t=\tau+1}^{\tau+\delta-1}
	(-1)^{|S|}
	\varphi_{S} 
	\prod_{j \in S} 
	\frac{
		x_{j}(t)
	}{
		N_{j}
	}
\right)
\geq 0
\\
%%%%
& 
\sum_{S \subseteq \mathcal{V} \setminus \{i\}}
\left(
\sum_{t=\tau+2}^{\tau+\delta}
	(-1)^{|S|}
	\varphi_{S} 
	\prod_{j \in S} 
	\frac{
		x_{j}(t)
	}{
		N_{j}
	}
-
\sum_{t=\tau+1}^{\tau+\delta-1}
	(-1)^{|S|}
	\varphi_{S} 
	\prod_{j \in S} 
	\frac{
		x_{j}(t)
	}{
		N_{j}
	}
\right) \\
& \quad 
+
\sum_{S \subseteq \mathcal{V}: i \in S}
\left(
\sum_{t=\tau+2}^{\tau+\delta}
	(-1)^{|S|}
	\varphi_{S} 
	\left(
		\frac{x_{i}(t)-1}{N_{i}}
	\right)
	\prod_{j \in S \setminus \{i\}} 
	\frac{
		x_{j}(t)
	}{
		N_{j}
	}
-
\sum_{t=\tau+1}^{\tau+\delta-1}
	(-1)^{|S|}
	\varphi_{S} 
	\left(
		\frac{x_{i}(t)}{N_{i}}
	\right)
	\prod_{j \in S \setminus \{i\}} 
	\frac{
		x_{j}(t)
	}{
		N_{j}
	}
\right)
\geq 0
\\
%%%%%
& 
\sum_{S \subseteq \mathcal{V} \setminus \{i\}}
\left(
	(-1)^{|S|}
	\varphi_{S} 
	\prod_{j \in S} 
	\frac{
		x_{j}(\tau+\delta)
	}{
		N_{j}
	}
-
	(-1)^{|S|}
	\varphi_{S} 
	\prod_{j \in S} 
	\frac{
		x_{j}(\tau+1)
	}{
		N_{j}
	}
\right) \\
& \quad 
+
\sum_{S \subseteq \mathcal{V}: i \in S}
%\Bigg[
	(-1)^{|S|}
	\varphi_{S} 
	\left(\frac{x_{i}(\tau+\delta)-1}{N_{i}}\right)
	\prod_{j \in S \setminus \{i\}} 
	\frac{
		x_{j}(\tau+\delta)
	}{
		N_{j}
	}
-
	(-1)^{|S|}
	\varphi_{S} 
	\left(\frac{x_{i}(\tau+1)}{N_{i}}\right)
	\prod_{j \in S \setminus \{i\}} 
	\frac{
		x_{j}(\tau+1)
	}{
		N_{j}
	}
\\ & \qquad
-
%\left(
	\frac{1}{N_{i}}
%\right)
\sum_{S \subseteq \mathcal{V}: i \in S} %%%
\
\sum_{t=\tau+2}^{\tau+\delta-1}
	(-1)^{|S|}
	\varphi_{S} 
	\prod_{j \in S \setminus \{i\}} 
	\frac{
		x_{j}(t)
	}{
		N_{j}
	}
%\Bigg]
\geq 0
\\
& 
\sum_{S \subseteq \mathcal{V} \setminus \{i\}}
\left(
	(-1)^{|S|}
	\varphi_{S} 
	\prod_{j \in S} 
	\frac{
		x_{j}(\tau+\delta)
	}{
		N_{j}
	}
-
	(-1)^{|S|}
	\varphi_{S} 
	\prod_{j \in S} 
	\frac{
		x_{j}(\tau+1)
	}{
		N_{j}
	}
\right) \\
& \quad 
+
\sum_{S \subseteq \mathcal{V}: i \in S}
%\Bigg[
    %\left(
    	\frac{x_{i}(\tau+\delta)}{N_{i}}
    %\right)
    \left( 
    	(-1)^{|S|}
    	\varphi_{S} 
    	\prod_{j \in S \setminus \{i\}} 
    	\frac{
    		x_{j}(\tau+\delta)
    	}{
    		N_{j}
    	}
    -
    	(-1)^{|S|}
    	\varphi_{S} 
    	\prod_{j \in S \setminus \{i\}} 
    	\frac{
    		x_{j}(\tau+1)
    	}{
    		N_{j}
    	}
    \right) \\
    & \qquad
    -
    %\left(
    	\frac{1}{N_{i}}
    %\right)
    \sum_{S \subseteq \mathcal{V}: i \in S} %%%
    \
    \sum_{t=\tau+2}^{\tau+\delta-1}
    	(-1)^{|S|}
    	\varphi_{S} 
    	\prod_{j \in S \setminus \{i\}} 
    	\frac{
    		x_{j}(t)
    	}{
    		N_{j}
    	}
%\Bigg]
\geq 
\left(\frac{1}{N_{i}}\right)
\sum_{S \subseteq \mathcal{V}: i \in S}
    	(-1)^{|S|}
    	\varphi_{S} 
    	\prod_{j \in S \setminus \{i\}} 
    	\frac{
    		x_{j}(\tau+\delta)
    	}{
    		N_{j}
    	}.
\end{align*}
}

Now let 
\begin{align*}
\alpha &  =
\sum_{S \subseteq \mathcal{V} \setminus \{i\}}
\left(
	(-1)^{|S|}
	\varphi_{S} 
	\prod_{j \in S} 
	\frac{
		x_{j}(\tau+\delta)
	}{
		N_{j}
	}
-
	(-1)^{|S|}
	\varphi_{S} 
	\prod_{j \in S} 
	\frac{
		x_{j}(\tau+1)
	}{
		N_{j}
	}
\right) \\
& \quad 
+
%\Bigg[
\sum_{S \subseteq \mathcal{V}: i \in S}
    \left(\frac{x_{i}(\tau+\delta)}{N_{i}}\right)
    \left( 
    	(-1)^{|S|}
    	\varphi_{S} 
    	\prod_{j \in S \setminus \{i\}} 
    	\frac{
    		x_{j}(\tau+\delta)
    	}{
    		N_{j}
    	}
    -
    	(-1)^{|S|}
    	\varphi_{S} 
    	\prod_{j \in S \setminus \{i\}} 
    	\frac{
    		x_{j}(\tau+1)
    	}{
    		N_{j}
    	}
    \right) \\
    & \qquad
    -
    %\left(
        \frac{1}{N_{i}}
    %\right)
    \sum_{S \subseteq \mathcal{V}: i \in S} %%%
    \
    \sum_{t=\tau+2}^{\tau+\delta-1}
    	(-1)^{|S|}
    	\varphi_{S} 
    	\prod_{j \in S \setminus \{i\}} 
    	\frac{
    		x_{j}(t)
    	}{
    		N_{j}
    	}
%\Bigg]
,
\end{align*}
which appears in both of the expected cost inequalities we have derived.
%From the optimality of $\mathbf{u}$, 
We have established that 
\begin{align*}
\alpha &\geq \left(\frac{1}{N_{i}}\right)
\sum_{S \subseteq \mathcal{V}: i \in S}
    	(-1)^{|S|}
    	\varphi_{S} 
    	\prod_{j \in S \setminus \{i\}} 
    	\frac{
    		x_{j}(\tau+\delta)
    	}{
    		N_{j}
    	}
\\
\alpha &\leq \left(\frac{1}{N_{i}}\right)
\sum_{S \subseteq \mathcal{V}: i \in S}
    	(-1)^{|S|}
    	\varphi_{S} 
    	\prod_{j \in S \setminus \{i\}} 
    	\frac{
    		x_{j}(\tau+1)
    	}{
    		N_{j}
    	}
\end{align*}

We now show that for any $i \in \mathcal{V}$, for any $u(t) \in \mathcal{V} \setminus \{i\}$,
\begin{equation}
h(t) = \sum_{S\subseteq \mathcal{V}: \; i \in S}
    	(-1)^{|S|}
    	\varphi_{S} 
    	\prod_{j \in S \setminus \{i\}} 
    	\frac{
    		x_{j}(t)
    	}{
    		N_{j}
    	} \label{eq: ht}
\end{equation}
is a nondecreasing function that is strictly increasing when $\varphi_{\{i,u(t)\}}>0$.

Observe that 
\begin{align*}
h(t+1) &= \sum_{S\subseteq \mathcal{V}: \; i \in S}
    	(-1)^{|S|}
    	\varphi_{S} 
    	\prod_{j \in S \setminus \{i\}} 
    	\frac{
    		x_{j}(t+1)
    	}{
    		N_{j}
    	}
\\
&= \sum_{S\subseteq \mathcal{V} \setminus \{u(t)\}: \; i \in S}
    	(-1)^{|S|}
    	\varphi_{S} 
    	\prod_{j \in S \setminus \{i\}} 
    	\frac{
    		x_{j}(t)
    	}{
    		N_{j}
    	}
\\
& \qquad +
\left(\frac{x_{u(t)}(t)+1}{N_{u(t)}}\right)
\sum_{S\subseteq \mathcal{V}: \; \{i,u(t)\} \subseteq S}
    	(-1)^{|S|}
    	\varphi_{S} 
    	\prod_{j \in S \setminus \{i,u(t)\}} 
    	\frac{
    		x_{j}(t)
    	}{
    		N_{j}
    	}	
\\
& = h(t) +\left( \frac{1}{N_{u(t)}} \right)
\sum_{S\subseteq \mathcal{V}: \; \{i,u(t)\} \subseteq S}
    	(-1)^{|S|}
    	\varphi_{S} 
    	\prod_{j \in S \setminus \{i,u(t)\}} 
    	\frac{
    		x_{j}(t)
    	}{
    		N_{j}
    	}.	
\end{align*}

From Lemma \ref{lem: problemma}, 
\begin{align*}
&
\left(
	\frac{x_{i}(t)x_{u(t)}(t)}{N_{i}N_{u(t)}}
\right)
\sum_{S\subseteq \mathcal{V}: \; \{i,u(t)\} \subseteq S}
    	(-1)^{|S|}
    	\varphi_{S} 
    	\prod_{j \in S \setminus \{i,u(t)\}} 
    	\frac{
    		x_{j}(t)
    	}{
    		N_{j}
    	}
	\geq 0
	\\
& \quad\Rightarrow
\left(
	\frac{1}{N_{u(t)}}
\right)
\sum_{S\subseteq \mathcal{V}: \; \{i,u(t)\} \subseteq S}
    	(-1)^{|S|}
    	\varphi_{S} 
    	\prod_{j \in S \setminus \{i,u(t)\}} 
    	\frac{
    		x_{j}(t)
    	}{
    		N_{j}
    	}
	\geq 0.
\end{align*}
This final inequality implies
$h(t+1) \geq h(t)$.  Because urn $i$ is not queried in stages $t=\tau+1,\ldots,\tau+\delta$, $h(t)$ is nondecreasing over these stages.  Therefore we can write
\begin{equation}
\alpha \geq h(\tau+\delta) \geq h(\tau+1) \geq \alpha.
\label{eq: hineq}
\end{equation}

This expression can only be satisfied by equality, which means that for any optimal policy that is not a block policy, we can maintain optimality while successively permuting the policy so that the urn queries are arranged into blocks.  
\halmos
\end{proof}

%\halmos
\end{proof}

\subsection{Theorem \ref{thm: independence}}
\label{proof: independence}
\begin{proof}{Proof of Theorem \ref{thm: independence}.}
\begin{proof}{}
This result comes from substituting the appropriate products into the result from Theorem \ref{thm: genprob}.  For any urn $i \in \mathcal{V}$,
\begin{align*}
\P(A_{i}|\vx(t)) &= 
\frac{
		\left(
			1-\frac{
					x_{i}(t)
				}{
					N_{i}
				}
		\right)
	\sum_{
		\{S \subseteq \mathcal{V}: \; i \in S\}
	}
	(-1)^{|S|-1}
	\varphi_{S}
	\prod_{
		j \in S \setminus \{i\}
	}
	\frac{
		x_{j}(t)
	}{
		N_{j}
	}
}{
	\sum_{S \subseteq \mathcal{V}}
	(-1)^{|S|}
	\varphi_{S} 
	\prod_{j \in S} 
	\frac{
		x_{j}(t)
	}{
		N_{j}
	}
}
\\
& = 
\frac{
		\varphi_{i}
		\left(
			1-\frac{
					x_{i}(t)
				}{
					N_{i}
				}
		\right)
	\sum_{
		\{S \subseteq \mathcal{V} \setminus \{i\}\}
	}
	(-1)^{|S|}
	\prod_{
		j \in S
 	}
	\varphi_{j}
	\frac{
		x_{j}(t)
	}{
		N_{j}
	}
}{
	\sum_{S \subseteq \mathcal{V}}
	(-1)^{|S|}
	\prod_{j \in S} 
	\varphi_{j}
	\frac{
		x_{j}(t)
	}{
		N_{j}
	}
}
\\
& = 
\frac{
		\varphi_{i}
		\left(
			1-\frac{
					x_{i}(t)
				}{
					N_{i}
				}
		\right)
	\sum_{
		\{S \subseteq \mathcal{V} \setminus \{i\}\}
	}
	(-1)^{|S|}
	\prod_{
		j \in S
 	}
	\varphi_{j}
	\frac{
		x_{j}(t)
	}{
		N_{j}
	}
}{
	\sum_{S \subseteq \mathcal{V} \setminus\{i\}}
	(-1)^{|S|}
	\prod_{j \in S} 
	\varphi_{j}
	\frac{
		x_{j}(t)
	}{
		N_{j}
	}
+
	\sum_{S \subseteq \mathcal{V}: i \in S}
	(-1)^{|S|}
	\prod_{j \in S} 
	\varphi_{j}
	\frac{
		x_{j}(t)
	}{
		N_{j}
	}
}
\\
& = 
\frac{
		\varphi_{i}
		\left(
			1-\frac{
					x_{i}(t)
				}{
					N_{i}
				}
		\right)
	\sum_{
		\{S \subseteq \mathcal{V} \setminus \{i\}\}
	}
	(-1)^{|S|}
	\prod_{
		j \in S
 	}
	\varphi_{j}
	\frac{
		x_{j}(t)
	}{
		N_{j}
	}
}{
	\sum_{S \subseteq \mathcal{V} \setminus\{i\}}
	(-1)^{|S|}
	\prod_{j \in S} 
	\varphi_{j}
	\frac{
		x_{j}(t)
	}{
		N_{j}
	}
-
\varphi_{i}\frac{x_{i}(t)}{N_{i}}
	\sum_{S \subseteq \mathcal{V} \setminus\{i\}}
	(-1)^{|S|}
	\prod_{j \in S} 
	\varphi_{j}
	\frac{
		x_{j}(t)
	}{
		N_{j}
	}
}
\\
& = 
\frac{
	\varphi_{i}
	\left(
            	1-\frac{
            		x_{i}(t)
            	}
            	{
            		N_{i}
            	}
	\right)
}{
	1-\varphi_{i}
	\left(
		\frac{
			x_{i}(t)
		}
		{
			N_{i}
		}
	\right)
}.
\end{align*}

In a similar manner we use Theorem \ref{thm: genprob} to find the urn probability for  subset $U \subseteq \mathcal{V}$ at stage $t$,
\begin{align*}
\P\left(\bigcap_{i \in U}A_{i}\right) &= 
\frac{
	\left[
		\prod_{i \in U}
		\left(
			1-\frac{
					x_{i}(t)
				}{
					N_{i}
				}
		\right)
	\right]
	\sum_{
		\{S \subseteq \mathcal{V}: \; S \supseteq U\}
	}
	(-1)^{|S|-|U|}\varphi_{S}
	\prod_{
		j \in S \setminus U
	}
	\frac{
		x_{j}(t)
	}{
		N_{j}
	}
}{
	\sum_{S \subseteq \mathcal{V}}
	(-1)^{|S|}
	\varphi_{S} 
	\prod_{j \in S} 
	\frac{
		x_{j}(t)
	}{
		N_{j}
	}
}
\\
&=\frac{
	\left[
		\prod_{i \in U}
		\varphi_{i}
		\left(
			1-\frac{
					x_{i}(t)
				}{
					N_{i}
				}
		\right)
	\right]
	\sum_{
		S \subseteq \mathcal{V} \setminus U
	}
	(-1)^{|S|}
	\prod_{
		j \in S
	}
	\varphi_{j}
	\frac{
		x_{j}(t)
	}{
		N_{j}
	}
}{
	\sum_{T \subseteq U}
	\sum_{S \subseteq \mathcal{V} \setminus U}
	(-1)^{|S|+|T|}
	\prod_{j \in S \cup T} 
	\varphi_{j} 
	\frac{
		x_{j}(t)
	}{
		N_{j}
	}
}
\\
&=\frac{
	\left[
		\prod_{i \in U}
		\varphi_{i}
		\left(
			1-\frac{
					x_{i}(t)
				}{
					N_{i}
				}
		\right)
	\right]
	\sum_{
		S \subseteq \mathcal{V} \setminus U
	}
	(-1)^{|S|}
	\prod_{
		j \in S
	}
	\varphi_{j}
	\frac{
		x_{j}(t)
	}{
		N_{j}
	}
}{
	\sum_{T \subseteq U}
	(-1)^{|T|}
	\prod_{k \in T}
	\varphi_{k} 
	\frac{
		x_{k}(t)
	}{
		N_{k}
	}
	\sum_{S \subseteq \mathcal{V} \setminus U}
	(-1)^{|S|}
	\prod_{j \in S} 
	\varphi_{j} 
	\frac{
		x_{j}(t)
	}{
		N_{j}
	}
}
\\
&=\frac{
	\left[
		\prod_{i \in U}
		\varphi_{i}
		\left(
			1-\frac{
					x_{i}(t)
				}{
					N_{i}
				}
		\right)
	\right]
}{
	\sum_{S \subseteq U}
	(-1)^{|S|}
	\prod_{j \in S}
	\varphi_{j} 
	\frac{
		x_{j}(t)
	}{
		N_{j}
	}
} = \prod_{i \in U} \P(A_{i}).
\end{align*}
In the final equality, we have used the property that for any $\beta_{1},\ldots,\beta_{M}$,
\[
\prod_{i=1}^{M} (1-\beta_{i}) = \sum_{S \subseteq [M]} \prod_{j \in S} (-\beta_{j}).
\]
We can verify this property by induction.  Define $\prod_{j \in \emptyset} (-\beta_{j}) = 1$.  Now observe
\begin{align*}
\prod_{i=1}^{M+1}(1-\beta_{i}) & = (1-\beta_{M+1})\prod_{i=1}^{M}(1-\beta_{i}) \\
& = (1-\beta_{M+1}) \sum_{S \subseteq [M]} \prod_{j \in S} (-\beta_{j}) \\
& = \sum_{S \subseteq [M]} \prod_{j \in S} (-\beta_{j}) -\beta_{M+1}\sum_{S \subseteq [M]} \prod_{j \in S} (-\beta_{j}) \\
& = \sum_{S \subseteq [M+1]} \prod_{j \in S} (-\beta_{j}). \\
\end{align*}
Setting $\beta_{i}=\varphi_{i}\left(\frac{x_{i}(t)}{N_{i}}\right)$ achieves the desired result. \halmos
\end{proof}

\end{proof}

\subsection{Theorem \ref{thm: independence_optimality}}
\label{proof: independence_optimality}
\begin{proof}{Proof of Theorem \ref{thm: independence_optimality}.}
%Proof of independence optimality
\begin{proof}{}
First we prove that the condition in %Theorem \ref{thm: independence_optimality}, 
equation \ref{eq: independence_optimality} implies optimality by contrapositive.  Let $\mathbf{u}_{B}=(v^{1},v^{2},\ldots,v^{|\mathcal{V}|})$ be a block policy that does not satisfy this condition, and let $i$ be an index for which
\[
N_{v^{i}}\left(\frac{2-\varphi_{v^{i}}}{\varphi_{v^{i}}}\right) >
N_{v^{i+1}}\left(\frac{2-\varphi_{v^{i+1}}}{\varphi_{v^{i+1}}}\right).
\]
Also, we define the first stage in which urn $v^{i}$ is queried in this policy as $\tau=\sum_{j=0}^{i-1} N_{j}$. 

We now construct an alternative block policy $\tilde{\mathbf{u}}_{B}$, so that
\[
\tilde{v}^{j}=\begin{cases}
v^{j} & j \notin \{i,i+1\} \\
v^{i+1} & j = i \\
v^{i} & j = i+1.
\end{cases}
\]
Let $\E[C]=\sum_{k=0}^{N-1}\prod_{t=0}^{k}\P(w(\vx(t),u(t))=0)$ be the expected cost of policy $\mathbf{u}_{B}$ and 
$\E[\tilde{C}]=\sum_{k=0}^{N-1}\prod_{t=0}^{k}\P(w(\tilde{\vx}(t),\tilde{u}(t))=0)$ be the expected cost of policy $\tilde{\mathbf{u}}_{B}$.  For brevity, we also define
$
\gamma = \prod_{t=0}^{\tau-1}P(w(\vx(t),u(t))=0) = \prod_{j=1}^{i-1}(1-\varphi_{v^{j}}) > 0,
$
according to Corollary \ref{cor: independence_block}.
Now consider the difference in expected cost,
\begin{align}
\E[C]-\E[\tilde{C}] &= \sum_{k=0}^{N-1}\prod_{t=0}^{k}\P(w(\vx(t),u(t))=0) - \sum_{k=0}^{N-1}\prod_{t=0}^{k}\P(w(\tilde{\vx}(t),\tilde{u}(t))=0) \nonumber 
\\
& = \sum_{k=\tau}^{\tau+N_{v^{i}}+N_{v^{i+1}}-1}\prod_{t=0}^{k}\P(w(\vx(t),u(t))=0) \nonumber
\\
& \quad- \sum_{k=\tau}^{\tau+N_{v^{i}}+N_{v^{i+1}}-1}\prod_{t=0}^{k}\P(w(\tilde{\vx}(t),\tilde{u}(t))=0) \nonumber
\\
& 
\stackeq{a}
 \gamma \left(\sum_{k=\tau}^{\tau+N_{v^{i}}+N_{v^{i+1}}-1}\prod_{t=\tau}^{k}\P(w(\vx(t),u(t))=0) \right) \nonumber
\\
& \qquad - 
\gamma \left(\sum_{k=\tau}^{\tau+N_{v^{i}}+N_{v^{i+1}}-1}\prod_{t=\tau}^{k}\P(w(\tilde{\vx}(t),\tilde{u}(t))=0)
\right) \nonumber
\\
& 
\stackeq{b} 
\gamma
\left(
	\left(
		N_{v^{i}}-\frac{(N_{v^{i}}+1)\varphi_{v^{i}}}{2}
	\right) 
	+ (1-\varphi_{v^{i}})
	\left(
		N_{v^{i+1}}-\frac{(N_{v^{i+1}}+1)\varphi_{v^{i+1}}}{2}
	\right)
\right) \nonumber
\\
& \quad -
\gamma
\left(
	\left(
		N_{v^{i+1}}-\frac{(N_{v^{i+1}}+1)\varphi_{v^{i+1}}}{2}
	\right) 
	+ (1-\varphi_{v^{i+1}})
	\left(
		N_{v^{i}}-\frac{(N_{v^{i}}+1)\varphi_{v^{i}}}{2}
	\right)
\right) \nonumber
\\
& =
\gamma
\left(
	\varphi_{v^{i+1}}
	\left(
		N_{v^{i}}-\frac{(N_{v^{i}}+1)\varphi_{v^{i}}}{2}
	\right) 
	-
	\varphi_{v^{i}}
	\left(
		N_{v^{i+1}}-\frac{(N_{v^{i+1}}+1)\varphi_{v^{i+1}}}{2}
	\right) 	
\right)  \nonumber
\\
& = 
\gamma\varphi_{v^{i}}\varphi_{v^{i+1}}
\left(
	\left(
		\frac{2N_{v^{i}} - N_{v^{i}} \varphi_{v^{i}}-\varphi_{v^{i}}}{2\varphi_{v^{i}}}
	\right) 
	-
	\left(
		\frac{2N_{v^{i+1}} - N_{v^{i+1}} \varphi_{v^{i+1}}-\varphi_{v^{i}}}{2\varphi_{v^{i+1}}}
	\right) 	
\right)  \nonumber
\\
& 
= 
\frac{\gamma\varphi_{v^{i}}\varphi_{v^{i+1}}}{2}
\left(
	N_{v^{i}}
	\left(
		\frac{2- \varphi_{v^{i}}}{\varphi_{v^{i}}}
	\right) 
	-
	N_{v^{i+1}}
	\left(
		\frac{2 - \varphi_{v^{i+1}}}{\varphi_{v^{i+1}}}
	\right) 	
\right) > 0. \label{eq: independence_contrapositive}
\end{align}
Steps (a) and (b) follow immediately from Corollary \ref{cor: independence_block}.  The difference in expected cost, $\E[C]-\E[\tilde{C}]$, is strictly positive so that block policy $\mathbf{u}_{B}$ cannot be optimal.  Therefore, the condition given in Theorem \ref{thm: independence_optimality} is necessary for optimality.

Now we show that the same condition is sufficient for optimality, i.e., if a block policy satisfies equation \eqref{eq: independence_optimality} in Theorem \ref{thm: independence_optimality}, then it must be an optimal policy.  In order to form a contradiction, suppose now that $\mathbf{u}_{B}$ is a block policy that satisfies the condition but is not optimal.  Let $\mathbf{u}^{\star}_{B}$ be the optimal block policy, which Theorem \ref{thm: blockpolicy} guarantees to exist.  

We know from the above argument that $\mathbf{u}^{\star}_{B}$ also must satisfy the condition, which implies that policies $\mathbf{u}_{B}$ and $\mathbf{u}^{\star}_{B}$ can only differ by permuting subsequences $v^{i},v^{i+1},\ldots v^{i+\delta}$ for which
\[
N_{v^{i}}
\left(
	\frac{2-\varphi_{v^{i}}}{\varphi_{v^{i}}}
\right) 
=
N_{v^{i+1}}
\left(
	\frac{2-\varphi_{v^{i+1}}}{\varphi_{v^{i+1}}}
\right) 
=\cdots = N_{v^{i+\delta}}
\left(
	\frac{2-\varphi_{v^{i+\delta}}}{\varphi_{v^{i+\delta}}}
\right)
.
\]
The optimal block policy $\mathbf{u}^{\star}_{B}$ therefore can be constructed by executing a finite number of sequential pairwise exchanges in the urn ordering in block policy $\mathbf{u}_{B}$, each satisfying the condition with equality.  However, it follows from the inequality in equation \eqref{eq: independence_contrapositive} that any such permutation results in the same expected policy cost.  We can conclude that the expected costs of the two policies are equal, establishing the contradiction and showing $\mathbf{u}_{B}$ to be an optimal policy. \halmos
\end{proof}

\end{proof}

\subsection{Theorem \ref{thm: single_marble_optimality}}
\label{proof: single_marble_optimality}
\begin{proof}{Proof of Theorem \ref{thm: single_marble_optimality}.}
%Single Marble optimality proof
\begin{proof}{}
The proof is similar to that of Theorem \ref{thm: independence_optimality}.  First we prove that the condition in equation \eqref{eq: single_marble_optimality} implies optimality by contrapositive.  Let $\mathbf{u}_{B}=(v^{1},v^{2},\ldots,v^{|\mathcal{V}|})$ be a block policy that does not satisfy equation \eqref{eq: single_marble_optimality}, and let $i$ be an index for which
\[
\frac{\varphi_{v^{i}}}
{N_{v^{i}}}
< 
\frac{\varphi_{v^{i+1}}}
{N_{v^{i+1}}}.
\]
Also, we define the first stage in which urn $v^{i}$ is queried in this policy as $\tau=\sum_{j=1}^{i-1} N_{j}$. 

We now construct an alternative block policy $\tilde{\mathbf{u}}_{B}$, so that
\[
\tilde{v}^{j}=\begin{cases}
v^{j} & j \notin \{i,i+1\} \\
v^{i+1} & j = i \\
v^{i} & j = i+1.
\end{cases}
\]
Let $\E[C]=\sum_{k=0}^{N-1}\prod_{t=0}^{k}\P(w(\vx(t),u(t))=0)$ be the expected cost of policy $\mathbf{u}_{B}$ and  $\E[\tilde{C}]=\sum_{k=0}^{N-1}\prod_{t=0}^{k}\P(w(\tilde{\vx}(t),\tilde{u}(t))=0)$ be the expected cost of policy $\tilde{\mathbf{u}}_{B}$.  
%For brevity, we also define
% $
% \gamma = \prod_{t=0}^{\tau-1}P(w(\vx(t),u(t))=0) = \prod_{j=1}^{i-1}(1-\varphi_{v^{j}}) > 0.
% $
Now consider the difference in expected cost,
\begin{align*}
\E[C]-\E[\tilde{C}] &= \sum_{k=0}^{N-1}\prod_{t=0}^{k}\P(w(\vx(t),u(t))=0) - \sum_{k=0}^{N-1}\prod_{t=0}^{k}\P(w(\tilde{\vx}(t),\tilde{u}(t))=0) \\
& = \sum_{k=\tau}^{\tau+N_{v^{i}}+N_{v^{i+1}}-1}\prod_{t=0}^{k}\P(w(\vx(t),u(t))=0) 
\\
& \quad- \sum_{k=\tau}^{\tau+N_{v^{i}}+N_{v^{i+1}}-1}\prod_{t=0}^{k}\P(w(\tilde{\vx}(t),\tilde{u}(t))=0) 
\\
& 
\stackeq{c}
 \left(
	N_{v^{i}}-\frac{(N_{v^{i}}+1)\varphi_{v^{i}}}{2}
	-N_{v^{i}}\sum_{j=1}^{i-1}\varphi_{v^{j}} 
	+	N_{v^{i+1}}-\frac{(N_{v^{i+1}}+1)\varphi_{v^{i+1}}}{2}
	-N_{v^{i+1}}\sum_{j=1}^{i}\varphi_{v^{j}}  
\right)
\\
& \quad 
-\Bigg(
	N_{v^{i+1}}-\frac{(N_{v^{i+1}}+1)\varphi_{v^{i+1}}}{2}
	-N_{v^{i+1}}\sum_{j=1}^{i-1}\varphi_{v^{j}} 
	\\
	&\qquad
	+	N_{v^{i}}-\frac{(N_{v^{i}}+1)\varphi_{v^{i}}}{2}
	-N_{v^{i}}\sum_{j=1}^{i-1}\varphi_{v^{j}} - N_{v^{i}}\varphi_{v^{i+1}}  
\Bigg)
\\
& = N_{v^{i}}
\varphi_{v^{i+1}}
-N_{v^{i+1}}
\varphi_{v^{i}} \\
& = N_{v^{i}}
N_{v^{i+1}}
\left(
	\frac{
		\varphi_{
			v^{i+1}
		}
	}
	{
		N_{
			v^{i+1}
		}
	}
	-\frac{
		\varphi_{
			v^{i}
		}
	}
	{
		N_{v^{i}}
	}
\right)  > 0.
\end{align*}
Step (c) follows from substituting the results in Corollary \ref{cor: single_marble_block}.
The difference in expected cost, $\E[C]-\E[\tilde{C}]$, is strictly positive so that block policy $\mathbf{u}_{B}$ cannot be optimal.  Therefore, the condition given in equation \eqref{eq: independence_optimality} is necessary for optimality.

Now we show that the same condition is sufficient for optimality, i.e., if a block policy satisfies equation \eqref{eq: single_marble_optimality}, then it must be an optimal policy.  In order to form a contradiction, suppose now that $\mathbf{u}_{B}$ is a block policy that satisfies the condition but is not optimal.  Let $\mathbf{u}^{\star}_{B}$ be the optimal block policy, which Theorem \ref{thm: blockpolicy} guarantees to exist.  

We know from the above argument that $\mathbf{u}^{\star}_{B}$ also must satisfy the condition, which implies that policies $\mathbf{u}_{B}$ and $\mathbf{u}^{\star}_{B}$ can only differ by permuting subsequences $v^{i},v^{i+1},\ldots v^{i+\delta}$ for which
\[
\frac{\varphi_{v^{i}}}
{N_{v^{i}}}
=
\frac{\varphi_{v^{i+1}}}
{N_{v^{i+1}}}
=\cdots = 
\frac{\varphi_{v^{i+\delta}}}
{N_{v^{i+\delta}}}.
\]
The optimal block policy $\mathbf{u}^{\star}_{B}$ therefore can be constructed by executing a finite number of sequential pairwise exchanges in the urn ordering in block policy $\mathbf{u}_{B}$, in which the two urns in each exchange satisfy the equation \eqref{eq: single_marble_optimality} with equality.  However, it follows from our previous argument that any such permutation does not affect expected policy cost.  We can conclude that the expected costs of the two policies are equal, establishing the contradiction and showing $\mathbf{u}_{B}$ to be an optimal policy.
\end{proof}

\end{proof}

\subsection{Theorem \ref{thm: monotonicity}}
\label{proof: monotonicity}
\begin{proof}{Proof of Theorem \ref{thm: monotonicity}.}
%Monotonicity Proof
\begin{proof}{}
The first inequality in Theorem \ref{thm: monotonicity} follows  from Equation \ref{eq: probrecursion} in Appendix \ref{proof: genprob}.  Given $u(t) \in U$,
\begin{align*}
\P\left(\bigcap_{i\in U} A_{i}|\vx(t+1)\right) 
&=\P\left(\bigcap_{i\in U} A_{i}|\vx(t)\right)
\left(
\frac{
1-\left(\frac{1}{N_{u(t)}-x_{u(t)}(t)}\right)}
{1-\left(\frac{1}{N_{u(t)}-x_{u(t)}(t)}\right)\P(A_{u(t)}|\vx(t))}
\right) \\
&\leq \P\left(\bigcap_{i\in U} A_{i}|\vx(t)\right).
\end{align*}
Note that if $\P\left(\bigcap_{i\in U} A_{i}|\vx(t)\right)= 0$, then $\P\left(\bigcap_{i\in U} A_{i}|\vx(t+1)\right)= 0$.  Equality is likewise preserved when $\P\left( A_{u(t)}|\vx(t)\right)= 1$. (We intentionally omit the case when no red marble is found after drawing all of the marbles in an urn $i$ for which $\varphi_{i}=1$.)  Assuming $0 < \P\left(\bigcap_{i\in U} A_{i}|\vx(t)\right)$ \emph{and} $ \P\left(A_{u(t)}|\vx(t)\right) < 1$, then the inequality becomes strict:
\begin{align*}
0 <
\left(\frac{1}{N_{u(t)}-x_{u(t)}(t)}\right)\P(A_{u(t)}|\vx(t)) 
&<
\left(\frac{1}{N_{u(t)}-x_{u(t)}(t)}\right)
\leq
1 \\
1-\left(\frac{1}{N_{u(t)}-x_{u(t)}(t)}\right)\P(A_{u(t)}|\vx(t)) 
& > 
1-\left(\frac{1}{N_{u(t)}-x_{u(t)}(t)}\right) 
\\
1 & > 
\frac{1-\left(\frac{1}{N_{u(t)}-x_{u(t)}(t)}\right) }
{1-\left(\frac{1}{N_{u(t)}-x_{u(t)}(t)}\right)\P(A_{u(t)}|\vx(t)) }
\\
\P\left(\bigcap_{i\in U} A_{i}|\vx(t)\right)
& >
\P\left(\bigcap_{i\in U} A_{i}|\vx(t)\right)
\left(
\frac{1-\left(\frac{1}{N_{u(t)}-x_{u(t)}(t)}\right) }
{1-\left(\frac{1}{N_{u(t)}-x_{u(t)}(t)}\right)\P(A_{u(t)}|\vx(t)) }
\right)\\
&=\P\left(\bigcap_{i\in U} A_{i}|\vx(t+1)\right)
\end{align*}

To prove the second inequality in Theorem \ref{thm: monotonicity}, first note that for any real numbers $\alpha,\beta$, such that $
\alpha > 0$, $\alpha+\beta > 0$, and $|\beta| > 0$,
\begin{align*}
\left(\alpha+\beta\right)^{2} = \alpha^{2}+2\alpha\beta+\beta^{2} 
& > \alpha^{2}+2\alpha\beta = \alpha\left(\alpha+2\beta\right)\\
%& \Rightarrow 
\left(\frac{\alpha + \beta}{\alpha}\right) &> \left(\frac{\alpha+2\beta}{\alpha+\beta}\right)
\end{align*}
Now let
\begin{align*}
\alpha&=\sum_{S \subseteq \mathcal{V}} (-1)^{|S|}\varphi_{S}\prod_{i \in S}\frac{x_{i}(t)}{N_{i}}
\\
\beta & = 
\left(
	\frac{1}{
		N_{u(t)}
	}
\right) 
\sum_{S \subseteq \mathcal{V}: u(t) \in S} 
(-1)^{|S|}\varphi_{S}\prod_{i \in S \setminus \{u(t)\}}\frac{x_{i}(t)}{N_{i}}.
\end{align*}
Observe that 
\begin{align*}
\alpha+\beta &= \sum_{S \subseteq \mathcal{V}} (-1)^{|S|}\varphi_{S}\prod_{i \in S}\frac{x_{i}(t+1)}{N_{i}} \\
\alpha+2\beta &= \sum_{S \subseteq \mathcal{V}} (-1)^{|S|}\varphi_{S}\prod_{i \in S}\frac{x_{i}(t+2)}{N_{i}}, 
\end{align*}
assuming urn $u(t)$ is queried again in stage $t+1$.  From Lemma \ref{lem: problemma} and Corollary \ref{cor: cost}, we can conclude that $\alpha > 0$ and $\alpha+\beta>0$ as long as there is a positive probability of reaching stage $t$ without finding a red marble.  Therefore,
\begin{align*}
\left(\frac{\alpha + \beta}{\alpha}\right) 
&\geq 
\left(\frac{\alpha+2\beta}{\alpha+\beta}\right)
\\
%\Rightarrow&
\frac{\sum_{S \subseteq \mathcal{V}} (-1)^{|S|}\varphi_{S}\prod_{i \in S}\frac{x_{i}(t+1)}{N_{i}}
}
{
\sum_{S \subseteq \mathcal{V}} (-1)^{|S|}\varphi_{S}\prod_{i \in S}\frac{x_{i}(t)}{N_{i}}
} 
&\geq
\frac{\sum_{S \subseteq \mathcal{V}} (-1)^{|S|}\varphi_{S}\prod_{i \in S}\frac{x_{i}(t+2)}{N_{i}}
}
{
\sum_{S \subseteq \mathcal{V}} (-1)^{|S|}\varphi_{S}\prod_{i \in S}\frac{x_{i}(t+1)}{N_{i}}
} 
\\
%\Rightarrow&
\P\left(w(\vx(t),u(t))=0\right) 
&\geq 
\P\left(w(\vx(t+1),u(t))=0\right)
\\
%\Rightarrow&
\P\left(w(\vx(t),u(t))=1\right) 
&\leq 
\P\left(w(\vx(t+1),u(t))=1\right).
\end{align*}

It follows from  Lemma \ref{lem: problemma} that  $\beta=0$ only when there is no probability of drawing a red marble from urn $u(t)$ in stage $t$.  Therefore, the inequality is strict whenever $\P(w(\vx(t),u(t))=1)>0$.
\end{proof}

\end{proof}

\subsection{Theorem \ref{thm: probrate}}
\label{proof: probrate}
\begin{proof}{Proof of Theorem \ref{thm: probrate}.}
%Probrate proof
\begin{proof}{}
We assume that all probabilities are positive.  Note that the result in Theorem \ref{thm: probrate} can be restated
\begin{equation}
\frac{\P\left(w(\vx(t),i)=1\right)}{\P\left(w(\vx(t),u(t))=1\right)}
\geq
\frac{\P\left(w(\vx(t+1),i)=1\right)}{\P\left(w(\vx(t+1),u(t))=1\right)}. \label{eq: probrate_restated}
\end{equation}

Now let
\begin{align*}
&\gamma_{i}=\sum_{S \subseteq \mathcal{V} \setminus \{u(t)\}: \; i \in S} (-1)^{|S|}\varphi_{S}\prod_{i \in S \setminus \{i\}}\frac{x_{i}(t)}{N_{i}}\\
&\gamma_{u(t)}=\sum_{S \subseteq \mathcal{V} \setminus \{i\}: \; u(t) \in S} (-1)^{|S|}\varphi_{S}\prod_{i \in S \setminus \{u(t)\}}\frac{x_{i}(t)}{N_{i}}\\
&\gamma_{i,u(t)}=\sum_{S \subseteq \mathcal{V}: \; \{i,u(t)\} \subseteq S} (-1)^{|S|}\varphi_{S}\prod_{i \in S \setminus \{i,u(t)\}}\frac{x_{i}(t)}{N_{i}}
\end{align*}
Substituting the probability distribution from Corollary \ref{cor: drawprob} into equation \eqref{eq: probrate_restated}, we have
\begin{align*}
\frac{
	-\left(1/N_{i}\right)\gamma_{i} - \left(\frac{x_{u(t)}(t)}{N_{i}N_{u(t)}}\right) \gamma_{i,u(t)}
}
{
	-\left(1/N_{u(t)}\right)\gamma_{u(t)}-\left(\frac{x_{i}(t)}{N_{i}N_{u(t)}}\right)\gamma_{i,u(t)}
}
& \geq
\frac{
	-\left(1/N_{i}\right)\gamma_{i}  - \left(\frac{x_{u(t)}(t)+1}{N_{i}N_{u(t)}}\right) \gamma_{i,u(t)}
}
{
	-\left(1/N_{u(t)}\right)\gamma_{u(t)}-\left(\frac{x_{i}(t)}{N_{i}N_{u(t)}}\right)\gamma_{i,u(t)}
}
\\
- \left(\frac{x_{u(t)}(t)}{N_{i}N_{u(t)}}\right) \gamma_{i,u(t)}
& \geq
- \left(\frac{x_{u(t)}(t)+1}{N_{i}N_{u(t)}}\right) \gamma_{i,u(t)}
\\
0
& \leq
 \gamma_{i,u(t)},
\end{align*}
which is true from Lemma \ref{lem: problemma}.
\halmos
\end{proof}

\end{proof}

\section{Conclusion} \label{sec: conclusion} %Future work and extensions

We have presented a multi-urn search problem as a model for searching for a specific vertex in a network.  Using this model, we have shown that there is always an optimal block policy in searches that meet the multi-urn search problem assumptions, irrespective of correlations in the probability model. We have also provided necessary and sufficient conditions for block policy optimality in two specific cases: independent urns and the single red marble scenario.   Finally, we gave a few properties of the dynamics of the multi-urn search problem and commented on the challenges of finding more general optimality conditions.

\subsection*{Future Work}

There are additional generalizations and extensions that we have not considered here, but which might also have interesting applications in modeling search.  One such generalization is removing the constraint that an urn can have at most one red marble.  This generalization might be an appropriate model for a network vertex search problem in which the network structure allowed for multiple edges between a pair of nodes.  Allowing for multiple red marbles in a single urn substantially changes the dynamics of the multi-urn search problem.

Another area of further inquiry could involve examination of the performance of different policies under various urn probability models.  We have shown, for example, that a purely greedy policy is not optimal in the case of independent urns.  However, the counter-example suggests that there could be some lower bounds on greedy policy performance, which might depend on the total number of urns and total number of marbles.  At the very least, there appears to be limits on how suboptimal we can make a greedy policy when constructing a two-urn data set.  Development in this direction could build on the results presented in \cite{chen2015sequential}.

%Comment on full information aspect.
We have assumed throughout our analysis that the target of the search would be easily identifiable to the searcher.  In our urn model this assumption translated to clear color distinction, so we assume we know immediately whether a drawn marble is red or blue.  However, we could relax this assumption in several ways.  We could, for example, characterize each marble with a feature set and develop a probability model that gives us the probability that a marble is a red marble, given its set of features.  Depending on the context of the search, this type of incomplete information model could evolve into a stopping problem in which the objective is to determine the best time to stop drawing new marbles in search of a red one.  Multi-arm bandit models and sequential mutual information maximization \cite{chen2015sequential} might offer useful approaches in this scenario.

A slight variation from this imperfect information approach would be to represent some marbles as being more ``red'' than others, according to some probability model.  In this model, the searcher receives a reward at the end of the search that is a function of the most ``red'' marble found, but still has to pay a fixed cost for each draw.  Like the imperfect information model, this formulation would ultimately be a stopping problem, balancing the current reward attained against the likelihood of attaining a higher reward by drawing more marbles.

We have also assumed uniform urn costs.  It is plausible, however, that in some cases it might cost more to draw marbles from some urns than from others.  Or, there could be a one-time access cost in order to gain the ability to draw marbles from the urn, which could represent law enforcement having to get a warrant to obtain internet or phone records for an individual.  

Our assumption that marbles are drawn in a random sequence from each urn might not be valid in some settings.  The Google search engine, for example, returns the most relevant results first, so that if a user does not find what he is looking for in the first few pages of results it might  make sense to try a different query rather than look through the remainder of the pages.  If we changed our model so that red marbles were more likely to be drawn first in each urn, then the problem would involve deciding when to stop querying an urn and switch to one that might be more promising.

%\begin{APPENDIX}{Proofs of Theorems.}

%\end{APPENDIX}

\bibliography{urn-refs}

\begin{thebibliography}{15}
\providecommand{\natexlab}[1]{#1}
\providecommand{\url}[1]{\texttt{#1}}
\expandafter\ifx\csname urlstyle\endcsname\relax
  \providecommand{\doi}[1]{doi: #1}\else
  \providecommand{\doi}{doi: \begingroup \urlstyle{rm}\Url}\fi

\bibitem[Auer et~al.(2002)Auer, Cesa-Bianchi, and Fischer]{auer2002finite}
Peter Auer, Nicolo Cesa-Bianchi, and Paul Fischer.
\newblock Finite-time analysis of the multiarmed bandit problem.
\newblock \emph{Machine learning}, 47\penalty0 (2-3):\penalty0 235--256, 2002.

\bibitem[Berkhin(2005)]{pagerank-survey}
Pavel Berkhin.
\newblock A survey on pagerank computing.
\newblock \emph{Internet Mathematics}, 2\penalty0 (1):\penalty0 73--120, 2005.
\newblock \doi{10.1080/15427951.2005.10129098}.
\newblock URL \url{http://dx.doi.org/10.1080/15427951.2005.10129098}.

\bibitem[Bertsekas(2000)]{bertsekas-dp}
Dimitri~P. Bertsekas.
\newblock \emph{Dynamic Programming and Optimal Control}.
\newblock Athena Scientific, 2nd edition, 2000.
\newblock ISBN 1886529094.

\bibitem[Bubeck and Cesa{-}Bianchi(2012)]{bandit-survey}
S{\'{e}}bastien Bubeck and Nicol{\`{o}} Cesa{-}Bianchi.
\newblock Regret analysis of stochastic and nonstochastic multi-armed bandit
  problems.
\newblock \emph{CoRR}, abs/1204.5721, 2012.
\newblock URL \url{http://arxiv.org/abs/1204.5721}.

\bibitem[Chen et~al.(2015)Chen, Hassani, Karbasi, and
  Krause]{chen2015sequential}
Yuxin Chen, S~Hamed Hassani, Amin Karbasi, and Andreas Krause.
\newblock Sequential information maximization: When is greedy near-optimal?
\newblock In \emph{Proc. International Conference on Learning Theory (COLT)},
  2015.

\bibitem[Chung et~al.(2003)Chung, Handjani, and Jungreis]{Chung2003}
Fan Chung, Shirin Handjani, and Doug Jungreis.
\newblock Generalizations of polya's urn problem.
\newblock \emph{Annals of Combinatorics}, 7\penalty0 (2):\penalty0 141--153,
  2003.
\newblock ISSN 0219-3094.
\newblock \doi{10.1007/s00026-003-0178-y}.
\newblock URL \url{http://dx.doi.org/10.1007/s00026-003-0178-y}.

\bibitem[Downey et~al.(2006)Downey, Etzioni, and
  Soderland]{downey2006probabilistic}
Doug Downey, Oren Etzioni, and Stephen Soderland.
\newblock A probabilistic model of redundancy in information extraction.
\newblock Technical report, DTIC Document, 2006.

\bibitem[Frostig and Weiss(2016)]{frostig}
Esther Frostig and Gideon Weiss.
\newblock Four proofs of gittins' multiarmed bandit theorem.
\newblock \emph{Annals of Operations Research}, 241\penalty0 (1):\penalty0
  127--165, 2016.
\newblock ISSN 1572-9338.
\newblock \doi{10.1007/s10479-013-1523-0}.
\newblock URL \url{http://dx.doi.org/10.1007/s10479-013-1523-0}.

\bibitem[Gittins and Jones(1974)]{gittins1974}
JC~Gittins and DM~Jones.
\newblock A dynamic allocation index for new-product chemical research.
\newblock \emph{report), CUED/A-Mat Stud/TR13, Department of Engineering,
  Cambridge University}, 1974.

\bibitem[Gittins(1979)]{gittins1979bandit}
John~C Gittins.
\newblock Bandit processes and dynamic allocation indices.
\newblock \emph{Journal of the Royal Statistical Society. Series B
  (Methodological)}, pages 148--177, 1979.

\bibitem[Guynn and Weis(2016)]{usatoday}
Jessica Guynn and Elizabeth Weis.
\newblock Twitter suspends 125,000 {ISIL}-related accounts.
\newblock \emph{{USA} Today}, February 6, 2016.
\newblock URL
  \url{http://www.usatoday.com/story/tech/news/2016/02/05/twitter-suspends-125000-isil-related-accounts/79889892/}.
\newblock
  \url{http://www.usatoday.com/story/tech/news/2016/02/05/twitter-suspends-125000-isil-related-accounts/79889892/};
  Accessed April 12, 2016.

\bibitem[Lai and Robbins(1985)]{lai}
Tze~Leung Lai and Herbert Robbins.
\newblock Asymptotically efficient adaptive allocation rules.
\newblock \emph{Advances in applied mathematics}, 6\penalty0 (1):\penalty0
  4--22, 1985.

\bibitem[Levi et~al.(2016)Levi, Magnanti, and Shaposhnik]{levi2016scheduling}
Retsef Levi, Thomas Magnanti, and Yaron Shaposhnik.
\newblock Scheduling with testing.
\newblock Technical report, Working paper, 2016.

\bibitem[Mahmoud(2008)]{mahmoud2008polya}
H.~Mahmoud.
\newblock \emph{Polya Urn Models}.
\newblock Chapman \& Hall/CRC Texts in Statistical Science. CRC Press, 2008.
\newblock ISBN 9781420059847.
\newblock URL \url{https://books.google.com/books?id=7Bizo28c2LQC}.

\bibitem[Wei(1979)]{polya-medical}
L.~J. Wei.
\newblock The generalized polya's urn design for sequential medical trials.
\newblock \emph{The Annals of Statistics}, 7\penalty0 (2):\penalty0 291--296,
  1979.
\newblock ISSN 00905364.
\newblock URL \url{http://www.jstor.org/stable/2958811}.

\end{thebibliography}
\bibliographystyle{plainnat}

\end{document}